\documentclass[a4paper, reqno,11pt]{amsart}

\usepackage{amsthm,amsfonts,amssymb,amsmath,amsxtra}
\usepackage[all]{xy}
% force xy arrow tips in computer modern font:
\SelectTips{cm}{}
\usepackage{tikz}
\usepackage{verbatim}

\usepackage[margin=1.4in]{geometry}
\usepackage{mathrsfs}

\usepackage{mathdots}% http://ctan.org/pkg/mathdots
\usepackage{MnSymbol}

% for smart spacing after certain commands:
\RequirePackage{xspace}
% for programming features:
\RequirePackage{etoolbox}
% for text environments which are automatically the natural width of the text they contain:
\RequirePackage{varwidth}
% for easy margin adjustment in enumerate and itemize environments:
\RequirePackage{enumitem}
% for good placement of super/subscripts to the left of symbols:
\RequirePackage{tensor}
% for various extensions of amsmath:
\RequirePackage{mathtools}
% allow tables to (if needed) split over multiple pages:
\RequirePackage{longtable}
% allow cells spanning multiple rows in tables:
\RequirePackage{multirow}
% allow nice line breaks inside cells (with \makecell{}):
\RequirePackage{makecell}
% for big integral signs:
\RequirePackage{bigints}
% for coloring text and other things:
\RequirePackage{xcolor}

\usepackage[backref=page]{hyperref} %

%\DeclareMathSymbol{\obot}         {2}{matha}{"6B}

% put sections only (as opposed to subsections) in the table of contents
\setcounter{tocdepth}{1}

\newcommand{\sE}{\ensuremath{\mathscr{E}}\xspace}

\newcommand{\fkq}{\ensuremath{\mathfrak{q}}\xspace}

\newcommand{\BC}{\ensuremath{\mathbb{C}}\xspace}
\newcommand{\BD}{\ensuremath{\mathbb{D}}\xspace}
\newcommand{\BE}{\ensuremath{\mathbb{E}}\xspace}
\newcommand{\BF}{\ensuremath{\mathbb{F}}\xspace}
\newcommand{\BG}{\ensuremath{\mathbb{G}}\xspace}

\newcommand{\BL}{\ensuremath{\mathbb{L}}\xspace}

\newcommand{\BQ}{\ensuremath{\mathbb{Q}}\xspace}
\newcommand{\BR}{\ensuremath{\mathbb{R}}\xspace}

\newcommand{\BV}{\ensuremath{\mathbb{V}}\xspace}
\newcommand{\BW}{\ensuremath{\mathbb{W}}\xspace}
\newcommand{\BX}{\ensuremath{\mathbb{X}}\xspace}

\newcommand{\BZ}{\ensuremath{\mathbb{Z}}\xspace}

\newcommand{\bH}{\ensuremath{\mathbf{H}}\xspace}

\newcommand{\bM}{\ensuremath{\mathbf{M}}\xspace}
\newcommand{\bN}{\ensuremath{\mathbf{N}}\xspace}

\newcommand{\CD}{\ensuremath{\mathcal{D}}\xspace}
\newcommand{\CE}{\ensuremath{\mathcal{E}}\xspace}
\newcommand{\CF}{\ensuremath{\mathcal{F}}\xspace}

\newcommand{\CL}{\ensuremath{\mathcal{L}}\xspace}
\newcommand{\CM}{\ensuremath{\mathcal{M}}\xspace}
\newcommand{\CN}{\ensuremath{\mathcal{N}}\xspace}
\newcommand{\CO}{\ensuremath{\mathcal{O}}\xspace}

\newcommand{\CV}{\ensuremath{\mathcal{V}}\xspace}

\newcommand{\CX}{\ensuremath{\mathcal{X}}\xspace}

\newcommand{\CZ}{\ensuremath{\mathcal{Z}}\xspace}

\newcommand{\heart}{{\heartsuit}}

\newcommand{\spade}{{\spadesuit}}

\DeclareMathOperator{\Aut}{Aut}

\DeclareMathOperator{\charac}{char}

\newcommand{\del}{\operatorname{\partial Orb}}

\DeclareMathOperator{\diag}{diag}

\DeclareMathOperator{\End}{End}

\newcommand{\Fil}{\ensuremath{\mathrm{Fil}}\xspace}

\DeclareMathOperator{\Gal}{Gal}

\newcommand{\GL}{\mathrm{GL}}

\newcommand{\gr}{\mathrm{gr}}

\newcommand{\GU}{\mathrm{GU}}

\DeclareMathOperator{\Hom}{Hom}

\newcommand{\id}{\ensuremath{\mathrm{id}}\xspace}

\DeclareMathOperator{\Int}{\ensuremath{\mathrm{Int}}\xspace}

\DeclareMathOperator{\Lie}{Lie}

\newcommand{\loc}{\ensuremath{\mathrm{loc}}\xspace}

\newcommand{\Nilp}{\ensuremath{\mathtt{Nilp}}\xspace}

\DeclareMathOperator{\Orb}{Orb}

\DeclareMathOperator{\rank}{rank}
\DeclareMathOperator{\Ros}{Ros}

\newcommand{\Sch}{\ensuremath{\mathtt{Sch}}\xspace}
\newcommand{\FL}{\ensuremath{\CF\!\CL}\xspace}

\newcommand{\red}{\ensuremath{\mathrm{red}}\xspace}

\DeclareMathOperator{\Res}{Res}
\newcommand{\rs}{\ensuremath{\mathrm{reg}}\xspace}

\DeclareMathOperator{\Spec}{Spec}
\DeclareMathOperator{\Spf}{Spf}

\newcommand{\SO}{{\mathrm{SO}}}

\newcommand{\SU}{{\mathrm{SU}}}

\DeclareMathOperator{\tr}{tr}

\newcommand{\U}{\mathrm{U}}

\newcommand{\Ver}{\mathrm{Vert}}

\DeclareMathOperator{\vol}{vol}

\newcommand{\wt}{\widetilde}

\newcommand{\pair}[1]{\langle {#1} \rangle}

\newcommand{\ov}{\overline}
\newcommand{\bre}{\breve}
\newcommand{\incl}{\hookrightarrow}
\newcommand{\lra}{\longrightarrow}

\newcommand{\lv}{\lvert}
\newcommand{\rv}{\rvert}

%%% some additional macros

% Equation  \AMSname
% Theorem   \theoremname

% Theorem environments.
%
\newtheorem{theorem}{Theorem}
\newtheorem{proposition}[theorem]{Proposition}
\newtheorem{lemma}[theorem]{Lemma}

\newtheorem{conjecture}[theorem]{Conjecture}
\newtheorem{`conjecture'}[theorem]{``Conjecture''}

\theoremstyle{definition}
\newtheorem{definition}[theorem]{Definition}
\newtheorem{example}[theorem]{Example}

\newtheorem{remark}[theorem]{Remark}

\newenvironment{altenumerate}
   {\begin{list}
      {(\theenumi) }
      {\usecounter{enumi}
       \setlength{\labelwidth}{0pt}
       \setlength{\labelsep}{0pt}
       \setlength{\leftmargin}{0pt}
       \setlength{\itemsep}{\the\smallskipamount}
       \renewcommand{\theenumi}{\roman{enumi}}
      }}
   {\end{list}}
% the following is a "level 2" altenumerate

\numberwithin{equation}{section}
\numberwithin{theorem}{section}

%%%% macros added by Brian
%%%% many of these require the etoolbox package, which should be loaded above

% gets rid of indentation in itemize and enumerate enivronments, and adds
% a small space between list items:
\setitemize[0]{leftmargin=*,itemsep=\the\smallskipamount}
\setenumerate[0]{leftmargin=*,itemsep=\the\smallskipamount}

% basic right arrow, short in inlines and long in displays
\renewcommand{\to}{%
   \ifbool{@display}{\longrightarrow}{\rightarrow}%
   }
% redefine \mapsto to be short in inlines and long in displays
\let\shortmapsto\mapsto
\renewcommand{\mapsto}{%
   \ifbool{@display}{\longmapsto}{\shortmapsto}%
   }
% long injection arrow
\newcommand{\hooklongrightarrow}{\mathrel{\mkern 0.5mu\lhook\mkern -3.5mu\relbar\mkern -3mu \rightarrow }}
% injection arrow, short in inlines and long in displays
\newcommand{\inj}{%
   \ifbool{@display}{\hooklongrightarrow}{\hookrightarrow}
   }
% isomorphism arrow, short in inlines and long in displays
\newcommand{\isoarrow}{%
   \ifbool{@display}{\overset{\sim}{\longrightarrow}}{\xrightarrow\sim}%
   }
% stretchable labeled right (2nd is xy-style) & left arrows, well-behaved inline or displayed
\newlength{\olen}
\newlength{\ulen}
\newlength{\xlen}
\newcommand{\xra}[2][]{%
   \ifbool{@display}%
      {\settowidth{\olen}{$\overset{#2}{\longrightarrow}$}%
       \settowidth{\ulen}{$\underset{#1}{\longrightarrow}$}%
       \settowidth{\xlen}{$\xrightarrow[#1]{#2}$}%
       \ifdimgreater{\olen}{\xlen}%
          {\underset{#1}{\overset{#2}{\longrightarrow}}}%
          {\ifdimgreater{\ulen}{\xlen}%
             {\underset{#1}{\overset{#2}{\longrightarrow}}}
             {\xrightarrow[#1]{#2}}}}%
      {\xrightarrow[#1]{#2}}
   }
\makeatother
\newcommand{\xyra}[2][]{%
   \settowidth{\xlen}{$\xrightarrow[#1]{#2}$}%
   \ifbool{@display}%
      {\settowidth{\olen}{$\overset{#2}{\longrightarrow}$}%
       \settowidth{\ulen}{$\underset{#1}{\longrightarrow}$}%
       \ifdimgreater{\olen}{\xlen}%
          {\mathrel{\xymatrix@M=.12ex@C=3.2ex{\ar[r]^-{#2}_-{#1} &}}}%
          {\ifdimgreater{\ulen}{\xlen}%
             {\mathrel{\xymatrix@M=.12ex@C=3.2ex{\ar[r]^-{#2}_-{#1} &}}}
             {\mathrel{\xymatrix@M=.12ex@C=\the\xlen{\ar[r]^-{#2}_-{#1} &}}}}}%
      {\mathrel{\xymatrix@M=.12ex@C=\the\xlen{\ar[r]^-{#2}_-{#1} &}}}%
   }
\makeatletter
\newcommand{\xla}[2][]{%
   \ifbool{@display}%
      {\settowidth{\olen}{$\overset{#2}{\longleftarrow}$}%
       \settowidth{\ulen}{$\underset{#1}{\longleftarrow}$}%
       \settowidth{\xlen}{$\xleftarrow[#1]{#2}$}%
       \ifdimgreater{\olen}{\xlen}%
          {\underset{#1}{\overset{#2}{\longleftarrow}}}%
          {\ifdimgreater{\ulen}{\xlen}%
             {\underset{#1}{\overset{#2}{\longleftarrow}}}
             {\xleftarrow[#1]{#2}}}}%
      {\xleftarrow[#1]{#2}}
   }
% left-right arrow, short inline and long in diplays
\renewcommand{\lra}{%
   \ifbool{@display}{\longleftrightarrow}{\leftrightarrow}%
   }
% tilde for use under arrows (e.g. \xra) to designate isomorphisms

\begin{document}
\thanks{Research of W. Zhang is partially supported by the NSF grant DMS  \#1901642. }

\title[AFL for Bessel subgroups]{More Arithmetic Fundamental Lemma conjectures: the case of Bessel subgroups}

\author{W. Zhang}
\address{Massachusetts Institute of Technology, Department of Mathematics, 77 Massachusetts Avenue, Cambridge, MA 02139, USA}
\email{weizhang@mit.edu}

\date{\today}

\begin{abstract}
We  define some formal moduli space of quasi-isogenies of isoclinic $p$-divisible groups with a non-reductive group as the ``structure group". We then formulate new Arithmetic Fundamental Lemma conjectures for Bessel subgroups in the context of the arithmetic Gan--Gross--Prasad conjectures. Some (very limited) evidence is presented. \end{abstract}

%MSC{11F67  11G40  14G35}
\maketitle

\tableofcontents

\section{Introduction}\label{s:intro}

For a (smooth projective) algebraic variety over a number field, the vanishing of its Hasse--Weil L-function at  the central point is conjectured to be accounted for by the non-triviality of its Chow group of (homologically trivial) algebraic cycles. A notable example is the conjecture of Birch and Swinnerton-Dyer for elliptic curves. There is now more evidence in higher dimensional cases provided by special cycles on Shimura varieties. In this context, the first example is provided by the Gross--Zagier formula \cite{GZ,YZZ1}, which relates the N\'eron--Tate heights of Heegner divisors on modular curves to the first central derivative of the relevant $L$-functions. The arithmetic Gan--Gross--Prasad (GGP) conjecture \cite{GGP,RSZ3,SZ,Z12} is one of the generalizations of the Gross--Zagier formula to higher-dimensional Shimura varieties.  The relevant Shimura variety is associated to the product unitary group $G=\U(1,n-2)\times \U(1,n-1)$ or orthogonal group $G=\SO(2,n-2)\times \SO(2,n-1)$, on which there is the so-called arithmetic diagonal cycle associated to the diagonally embedded $H=\U(1,n-2)$ or $H=\SO(2,n-2)$. Then the case $n=2$ (for both unitary and orthogonal groups) essentially recovers Heegner divisors on modular curves.  

The  arithmetic Gan--Gross--Prasad conjecture is inspired by the (usual) Gan--Gross--Prasad conjecture, which relates automorphic period integrals on classical groups to the special values of Rankin--Selberg tensor product $L$-functions (cf. for recent advances in this direction). The latter conjecture makes sense for more general groups, such as the product group $G=\U(m)\times \U(n)$ and  $G=\SO(m)\times \SO(n)$ when $m-n$ is odd,  while the arithmetic conjecture only exists for the case $|n-m|=1$\footnote{Yifeng Liu \cite{Liu18} discovered how to formulate the arithmetic conjecture in the equal rank unitary case $\U(n)\times \U(n)$; however we will not discuss the even $n-m$ case in this paper.}. A natural question is how to formulate the arithmetic conjecture for the general product groups. This paper aims to give a partial answer to this question. In short, there is no global cycle to formulate a global conjecture but there is a local cycle and one can formulate a local conjecture. In other words, there is no Shimura variety with the desired special algebraic cycle when $|n-m|>1$, but there is a local Shimura variety (\cite{RV,RZ}, a formal scheme in this paper) over a $p$-adic integer ring with the desired cycle (a  closed formal subscheme) to formulate an arithmetic fundamental lemma (AFL) conjecture.

To be more precise let us recall that 
Jacquet and Rallis  formulated a relative trace formula (RTF) approach to the (usual) Gan--Gross--Prasad conjecture for $\U(n-1)\times\U(n)$ \cite{JR}.
Inspired by their approach, in \cite{Z12} the author proposed a relative trace formula approach to the arithmetic Gan--Gross--Prasad conjecture.  A key local statement of this approach is  the arithmetic fundamental lemma (AFL) conjecture,  which relates the central derivatives of certain orbital integrals to the arithmetic intersection numbers of ``local arithmetic diagonal cycles" on a Rapoport--Zink (RZ) space for unitary groups,
\begin{align}\label{eq:AFL}
\del\bigl(\gamma, \mathbf{1}_{S_n(O_{F_0})}\bigr) 
	   = -\Int(g)\cdot\log q,
\end{align}
where $F_0$ is a $p$-adic local field and $S_n\simeq \GL_{n,F}/\GL_{n,F_0}$ is the symmetric space with respect to an unramified quadratic extension $F$ of $F_0$. See \S\ref{ss:AFL} for the precise formulation.
This conjecture is largely proved by the author \cite{Z21} when $F_0=\BQ_p$, and by Mihatsch and the author in \cite{MZ} for general $p$-adic fields $F_0$, provided the residue cardinality $q\geq n$ of $F_0$ in both cases. 

In \cite{Liu14}, Liu generalized the RTF construction of Jacquet and Rallis to all unitary groups $\U(m)\times \U(n)$ (the Bessel case when $n-m$ is odd, and Fourier--Jacobi case when $n-m$ is even). In particular, he has essentially generalized the left hand side of the AFL conjecture \eqref{eq:AFL} above to the general case of unitary groups; we will recall  the Bessel case of his orbital integrals in \S\ref{s:FL}. We will also give a lattice counting interpretation of the orbital integral in \S\ref{ss:lat} and prove the conjecture for some special elements (by reduction to Jacquet--Rallis FL). Then we will generalize the local arithmetic diagonal cycle to the general Bessel case, see \S\ref{s:AFL}. The key new input is a class of RZ spaces with non-reductive groups as the ``structure groups", which may be of independent interest. The RZ spaces for reductive groups (in the EL or PEL cases) parameterize $p$-divisible groups with additional structures, cf. \S\ref{ss:RZ data}. Here we add a filtration by $p$-divisible groups (respecting the additional structure) to define the RZ spaces for certain non-reductive groups, see \S\ref{s:RZ fil}.

There are vast generalizations of the GGP conjecture in terms of period integrals on  (homogeneous) {\em spherical varieties} $G/H$ for a pair of algebraic groups  $(G,H), H\subset G,$ over number fields; see the series of work by Sakellaridis (e.g. \cite{Sa}) and  a version of the general conjecture of Ichino--Ikeda type, formulated by Sakellaridis and Venkatesh \cite{SV}. The idea  in this paper seems applicable to more spherical pairs $(G,H)$ where $G/H$ is non-affine (equivalently, by a theorem of Matsushima, $H$ is non-reductive; here $G$ is always assumed to be reductive). In fact, 
there are many more non-affine (homogeneous) spherical varieties than affine ones, and it would be interesting to classify all such spherical varieties which behaves like the cases in this paper. For all of the non-affine ones, it is currently hopeless to construct the global cycles but we expect to find the desired 
local cycles over $p$-adic fields, at least assuming the existence of a ``good" theory of (integral models of) local Shimura varieties (i.e., one needs a suitable intersection theory on them). 
Along this direction, we hope to investigate the arithmetic GGP conjecture for $G=\SO(m)\times \SO(n)$ when $m-n$ is odd, which involves local Shimura varieties of Hodge type (GSpin RZ spaces). In the non-affine case, the arguably  ``simplest" example is the Ginzburg--Rallis subgroup, which only involves RZ spaces of EL type and is a natural generalization of the Lubin--Tate deformation space; we will discuss this briefly in \S\ref{s:GR}.  There are also non-affine spherical varieties for  exceptional groups, which are more challenging at this moment, due to the lack of a ``good" theory of integral model of local Shimura varieties.

\subsection*{Acknowledgements} With admiration we dedicate this paper to Dick Gross on the occasion of his 70th birthday. He is a pioneer on the theory of special cycles and their relation to  L-functions, the main subject of this paper. The author is grateful for his generosity sharing many ideas and insights, and for his constant encouragement over the years. 
The author thanks Yifeng Liu, Michael Rapoport, Yiannis Sakellaridis, Jonathan Wang, and Shou-Wu Zhang for their comments on an earlier draft.
\section{RZ spaces: the reductive case}
Rapoport and Zink have constructed formal moduli spaces of $p$-divisible groups with EL or PEL structures (see \cite[\S3]{RZ}).  These moduli spaces admit actions by (the $\BQ_p$-points of) certain reductive groups over $\BQ_p$.  We recall their construction in this section.

\subsection{The RZ datum: the EL and PEL case}\label{ss:RZ data}
 The formal moduli spaces of Rapoport and Zink, abbreviated as RZ spaces in this paper, depend on some linear algebraic data, called ``RZ data". In this paper, we will only consider {\em simple} RZ data, cf. \cite[\S4.1]{RV}.  A {\em simple rational RZ datum} in the EL case is a tuple 
$$
\CD=(F,B,V,\{\mu\},[b]),
$$  where
\begin{itemize}
\item $F$ is a finite field extension of $\BQ_p$,
\item $B$ is a central {\em division} $F$-algebra,
\item $V$ is a finite dimensional left $B$-module,
\item $\{\mu\}$ is a conjugacy class of minuscule cocharacters $\mu:\BG_m\to G_{\ov\BQ_p}$ where we define $G=\GL_B(V)$ (as an algebraic group over $\BQ_p$),
\item $[b]\in B(G)$, where $B(G)$ is the set of $\sigma$-conjugacy classes of $G(\bre \BQ_p)$ (cf. \cite[\S2.1]{RV}). 
\end{itemize}
Here, we denote the completion of a maximal unramified extension of a $p$-adic field $F$ by $\bre F$. Moreover, $\sigma\in\Aut(\bre \BQ_p/\BQ_p)$ denotes the Frobenius automorphism.

A {\em simple rational RZ datum} in the PEL case is a tuple $$\CD=(F,B,V,(\cdot,\cdot),\ast,\{\mu\},[b])$$
 where $F,B,V,\{\mu\},[b]$ are as in the EL case  (but for the structure group $G$ defined below) and 
\begin{itemize}
\item $(\cdot,\cdot): V\times V\to \BQ_p$ is a non-degenerate alternating $\BQ_p$-bilinear form,
\item $\ast: B\to B$ is an involution such that $(ax,y)=(x,a^\ast y)$  holds for $a\in B$ and $x,y\in V$,
\item  $G=\GL_B(V, (\cdot,\cdot))$ (as an algebraic group over $\BQ_p$) whose $R$-points for any $\BQ_p$-algebra $R$ are 
$$G(R)=\left\{ g\in \GL_B(V\otimes_{\BQ_p} R)\mid (gx,gy)=c(g)(x,y),c(g)\in R^\times \right\}.$$
\end{itemize}

In both cases, we imposes the following condition on $\{\mu\}$. Any cocharacter  $\mu$ induces a weight decomposition on $V\otimes_{\BQ_p}\ov \BQ_p$. We require that for any $\mu \in\{\mu\}$,  only weights 0 and 1 occur in the decomposition, i.e. 
\begin{align}\label{eq:wt dec}
V\otimes_{\BQ_p}\ov \BQ_p=V_0\oplus V_1.
\end{align} In the PEL case, we further require that for any  $\mu \in\{\mu\}$, the composition with the similitude character $c$
$$
\xymatrix{\BG_m\ar[r]^\mu& G_{\ov \BQ_p}\ar[r]^c& \BG_{m,\ov\BQ_p}}
$$
is the identity. 

Now we turn to the integral datum. A {\em simple integral RZ datum} $\CD_{\BZ_p}$ in the EL case consists, in addition to the rational data $\CD$, of a maximal order $O_B$ in $B$ and an $O_B$-stable lattice $\Lambda$ in $V$. A {\em simple integral RZ datum} $\CD_{\BZ_p}$ in the PEL case consists, in addition to the rational data $\CD$, of a maximal order $O_B$ in $B$ which is stable under the involution $\ast$ and an $O_B$-stable lattice $\Lambda$ in $V$ such that $\Lambda\subset\Lambda^\vee\subset \varpi^{-1}\Lambda$.
Here $\varpi$ denotes a uniformizer of $O_B$, and $$\Lambda^\vee=\{x\in V\mid (x,\Lambda)\subset \BZ_p\}$$ denotes the ``dual lattice" of $\Lambda$\footnote{We may call a lattice $\Lambda$ a {\em vertex lattice} if it satisfies $\Lambda\subset\Lambda^\vee\subset \varpi^{-1}\Lambda$.}.

\begin{remark}More general RZ data (still in the EL and PEL case) can be found in \cite[\S3]{RZ}, where $B$ could be a semisimple $\BQ_p$-algebra and $\Lambda$ is replaced by a ``periodic lattice chain".
\end{remark}

For simplicity we will assume that $[b]$ is {\em basic} for the rest of this paper, cf. \cite[\S2.1]{RV}. Notice that there is a unique basic element in the subset $B(G,\mu)$ of $\mu$-neutral acceptable elements in $B(G)$ (see \cite[Def.~2.3]{RV}). In the EL and PEL case, the basic element can be characterized by the associated isocrystal (given by $b(1_V\otimes \sigma)$) on $V\otimes_{\BQ_p}\bre\BQ_p$ being {\em isoclinic} (i.e., only a single slope appears).  

We let $E=E_{\{\mu\}}$ denote the reflex field (i.e., the field of definition of $\{\mu\}$).
To each $b\in G(\bre \BQ_p)$ one may associate an algebraic group over $\BQ_p$, denoted by $J_b$, such that for any $\BQ_p$-algebra $R$,
\begin{align}\label{eq:Jb}
J_b(R)=\{ g\in G(R\otimes_{\BQ_p} \bre\BQ_p) \mid g (b\sigma)= (b\sigma) g\}.
\end{align} 
Up to isomorphism the group $J_b$ depends only on the class $[b]\in B(G)$. For $[b]$ basic,  the group $J_b$ is an inner form of $G$.

We also single out the (simple) {\em unramified} case \cite[\S3.82]{RZ}, which is of particular interest in this paper. We recall that this means that $B=F$ is an unramified field extension of $\BQ_p$, and in the PEL case, $\Lambda=\Lambda^\vee$ (i.e., a self-dual lattice with respect to the given alternating form on $V$).

We will mostly concentrate on two (families of) examples.
\begin{example}[EL case]\label{ex:EL}
 All of the simple unramified integral RZ data $\CD_{\BZ_p}$ in the EL case arise as follows. Let $B=F$ is an unramified field extension of $\BQ_p$,  $V=F^{n}$ is the standard $n$-dimensional $F$-vector space, $\Lambda\subset V$ an $O_F$-lattice. Then $G=\Res_{F/\BQ_p}\GL_{n,F}$. A cocharacter $$\mu: \BG_m\to G_{\ov\BQ_p}\simeq \prod_{\varphi\in \Hom_{\BQ_p}(F,\ov\BQ_p)}\GL_{n,\ov\BQ_p} $$ consists of a tuple $  (\mu_\varphi)_{\varphi\in \Hom_{\BQ_p}(F,\ov\BQ_p)}$ of cocharacters $\mu_\varphi:   \BG_m\to \GL_{n,\ov\BQ_p}$. We consider (the conjugacy class of)
$$
\mu_\varphi(z)= \begin{bmatrix} {\bf 1}_{r_\varphi}  &\\
&  z \cdot {\bf1}_{s_\varphi}
 \end{bmatrix},
$$
where $(r_\varphi, s_{\varphi})_{\varphi\in \Hom_{\BQ_p}(F,\ov\BQ_p)}$ is  a tuple of non-negative integers such that $r_\varphi+s_{\varphi}=n$.
The reflex field $E$ is then the subfield of $\ov\BQ_p$ corresponding to the open subgroup of $\Gal(\ov\BQ_p/\BQ_p)$ fixing the tuple  $(r_\varphi, s_{\varphi})_{\varphi\in \Hom_{\BQ_p}(F,\ov\BQ_p)}$ (under the obvious action). Let $[b]$ be the unique basic element in $B(G,\mu)$.  

\end{example}
\begin{example}[unitary PEL case]\label{ex:PEL}
We consider the following  simple  integral RZ data $\CD_{\BZ_p}$ in the PEL case, related to unitary groups. Let $F/F_0$ be an unramified quadratic extension of $p$-adic fields with the non-trivial Galois involution denoted by $a\mapsto \ov{a}$. Let $B=F$, and $V$ a non-degenerate $F/F_0$-Hermitian space of $F$-dimension $n$. Upon choosing a unit in the ``imaginary" $F_0$-line $F^-:=\{a\in F\mid  \ov a=-a\}$, the Hermitian form induces an alternating $\BQ_p$-bilinear form on $V$. Then $G=\GU(V,(\cdot,\cdot))$ is the unitary similitude group. Let $\Lambda\subset V$ be an $O_F$-lattice such that $\Lambda\subset\Lambda^\vee\subset \varpi^{-1}\Lambda$. We consider the conjugacy class of $\mu: \BG_m\to G_{\ov\BQ_p}$ corresponding to the ``signature" 
$(r_\varphi, s_{\varphi})_{\varphi\in \Hom_{\BQ_p}(F,\ov\BQ_p)}$ such that $r_\varphi+s_{\varphi}=n$ and $r_\varphi=s_{\ov\varphi}$, where $r_\varphi$ denotes the dimension of $V_{0,\varphi}$.  Let $[b]$ be the unique basic element in $B(G,\mu)$.  
\end{example}

\subsection{The formal moduli spaces in the reductive case}\label{ss:RZ red}
Let $O_{E}$ and $O_{\bre E}$ denote the ring of integers in $E$ and $\bre E$ respectively. Let $\BF$ be the residue field of $O_{\bre E}$. Let $\Nilp=\Nilp_{O_{\bre E}}$ denote the category of $O_{\bre E}$-schemes $S$ on which $p$ is locally nilpotent.  For $S\in \Nilp$ we denote by $\ov S$ its special fiber $S\times_{\Spec O_{\bre E}}\Spec O_{\bre E}/(p) $. Note that $\Spec O_{\bre E}/(p) $ is a scheme over $\Spec \BF$.

Fix a simple integral RZ data $\CD_{\BZ_p}$.
We will consider pairs $(X,\iota)$ in the EL case, and triples $(X,\iota,\lambda)$ in the PEL case, where 
\begin{itemize}
\item $X$ is a $p$-divisible group over $S\in \Nilp$, 
\item $\iota:O_B\to\End(X)$ is an action of $O_B$ on $X$,
\item In the PEL case, a polarization $\lambda: X\to X^\vee$.
\end{itemize}
We impose the Kottwitz condition, i.e., the equality of characteristic polynomials for all $ a\in O_B$
$$
 \charac \bigl(\iota(a);\Lie(X)\bigr)= \charac \bigl(a; V_0\bigr),
$$ 
where $V_0$ is the weight zero subspace of $V$, cf. \eqref{eq:wt dec}. In the PEL case, we further impose the condition that $\ker(\lambda)\subset X[\varpi]$ has order equal to $\# (\Lambda^\vee/\Lambda)$, and that the Rosati involution on $O_B$ coincides with $\ast$.

 To define the moduli space, besides an integral RZ datum $\CD_{\BZ_p}$, we also need to fix a {\em framing object}, i.e., a pair $(\BX,\iota_\BX)$ over $\Spec\BF$  in the EL case (resp. a triple $(\BX,\iota_\BX,\lambda_\BX)$ in the PEL case) as above. We assume that the induced isocrystal with the $O_B$-action (and the alternating form induced by the polarization in the PEL case) is isomorphic to $(V\otimes_{\BQ_p}\bre\BQ_p, b\sigma)$ (in the PEL case, $(V\otimes_{\BQ_p}\bre\BQ_p, (\cdot,\cdot),b\sigma)$, the alternating form preserved up to a $\bre\BQ_p^\times$-factor). Then the group of self quasi-isogenies of $\BX$ respecting the additional structure is isomorphic to the group $J_b(\BQ_p)$. 

We are ready to state the definition of the moduli functor of Rapoport--Zink in the EL case:
$$
\xymatrix{\CM: \Nilp\ar[r]& Sets}.
$$
It associates to $S\in\Nilp$ the set of isomorphism classes $(X,\iota, \rho)$ where $(X,\iota)$ is as above and $$
\xymatrix{\rho: X\times_S \ov S\ar[r]& \BX\times_{\Spec\BF}\ov S }
$$ is an $O_B$-linear quasi-isogeny \cite[\S2]{RZ}. In the PEL case, $\CM(S)$ is the set of isomorphism classes $(X,\iota, \lambda,\rho)$, where $(X,\iota,\lambda)$ is as above and $$
\xymatrix{\rho: X\times_S \ov S\ar[r]& \BX\times_{\Spec\BF}\ov S }
$$
 is an $O_B$-linear quasi-isogeny that preserves the polarizations up to a factor in
$\BQ_p^\times$, locally on $\ov S$. 

Here the datum $\rho$ is called a {\em framing} of  $(X,\iota)$ or $(X,\iota,\lambda)$. The group $J_b(\BQ_p)$ acts on $\CM$ by changing the framing $\rho$.
\begin{theorem}[Rapoport--Zink]\label{th:RZ}
The functor $\CM$ is (pro-)representable by a formal scheme, formally locally of finite type over $\Spf O_{\bre E}$.
\end{theorem}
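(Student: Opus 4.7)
The plan is to follow the original strategy of Rapoport and Zink \cite{RZ}: first establish representability of the ``naive'' moduli functor $\CM^\circ$ parameterizing only pairs $(X,\rho)$, where $X$ is a $p$-divisible group over $S\in\Nilp$ and $\rho$ is a quasi-isogeny to $\BX$ over $\ov S$, and then cut out the EL (respectively PEL) structure conditions as (locally) closed subfunctors. For $\CM^\circ$ I would stratify by the pole order of $\rho$, writing $\CM^\circ=\bigsqcup_N \CM^\circ_{\le N}$, so that it suffices to represent each $\CM^\circ_{\le N}$ by a formal scheme formally of finite type. Over the special fiber, a bounded quasi-isogeny $\rho$ identifies the Dieudonn\'e crystal of $X\times_S\ov S$ with a specific sublattice (within a fixed denominator range) of the isocrystal $(V\otimes_{\BQ_p}\bre\BQ_p,b\sigma)$ attached to $\BX$. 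Grothendieck--Messing, applied to a PD-thickening $\ov S\hookrightarrow S$, then puts lifts of $X\times_S\ov S$ to $S$ in bijection with admissible Hodge filtrations on the evaluation of this crystal on $S$. These filtrations are parameterized by a closed formal subscheme of the relative Grassmannian of locally direct summands of rank $\dim V_0$ (cf.~\eqref{eq:wt dec}); this is the content of \cite[\S2--\S3]{RZ}.

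Second, once $\CM^\circ$ is in hand, I would realize $\CM$ as a closed subfunctor of an open-and-closed part of $\CM^\circ$. The requirement that the $O_B$-action $\iota_\BX$ on $\BX$ lift through $\rho$ to an action $\iota$ on $X$ reduces, by rigidity of quasi-isogenies, to demanding that a finite list of explicit quasi-endomorphisms of $X$ be genuine endomorphisms; this is a closed condition on $\CM^\circ$. The Kottwitz determinant condition $\charac\bigl(\iota(a);\Lie(X)\bigr)=\charac(a;V_0)$ is locally constant in $S$ and hence cuts out an open-and-closed formal subscheme. In the PEL case, the further existence of a polarization $\lambda$ compatible with $\lambda_\BX$ up to a $\BQ_p^\times$-scalar is another closed condition (one first descends the quasi-polarization, then asks it to be a polarization), and the size constraint $\#\ker(\lambda)=\#(\Lambda^\vee/\Lambda)$ is locally constant.

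The main obstacle will be Step~1, specifically verifying that each $\CM^\circ_{\le N}$ is formally of finite type rather than merely a formal algebraic space. This relies on the Rapoport--Zink boundedness theorem for the set of $O_B$-stable lattices in a fixed isocrystal at bounded lattice distance from a base lattice, together with a careful Grothendieck--Messing analysis in mixed characteristic (equivalently formulated through Zink's theory of displays). Once this foundational representability is in place, all additional EL/PEL data impose only routine closed-subfunctor and open-closed conditions, and descent from $\Spf O_{\bre E}$ to $\Spf O_E$ is supplied by the Weil descent datum attached to the conjugacy class $\{\mu\}$, completing the proof.
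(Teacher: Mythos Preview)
Your outline is essentially the Rapoport--Zink strategy that the paper simply cites (\cite[Thm.~3.25]{RZ}); the paper gives no proof of its own, so there is nothing to compare beyond noting that you are sketching the referenced argument. The two-step plan (represent the bare functor $\CM^\circ$ as in \cite[Ch.~2]{RZ}, then impose EL/PEL conditions as in \cite[Ch.~3]{RZ}) is correct.

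A few small corrections. First, the bounded pieces $\CM^\circ_{\le N}$ are nested, so you want $\CM^\circ=\bigcup_N\CM^\circ_{\le N}$ (or a filtered colimit), not a disjoint union $\bigsqcup_N$. Second, the Kottwitz condition is \emph{not} locally constant in $S$: the characteristic polynomial of $\iota(a)$ on $\Lie(X)$ has coefficients in $\CO_S$ and varies with $S$, so the equality $\charac(\iota(a);\Lie(X))=\charac(a;V_0)$ is a genuine closed condition, not open-and-closed. Third, the theorem as stated is over $\Spf O_{\bre E}$, so the Weil descent datum plays no role here; you may drop that last sentence. None of these affect the validity of the argument, but they should be fixed before writing it out.
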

See \cite[Thm.~3.25]{RZ} for a more general statement (cf. \cite[Def.~3.21]{RZ} and the remarks in \S3.23 of {\em loc. cit.} for the equivalence of the definitions).

\subsection{Local models}\label{ss:LM}
The local structure of $\CM$ is governed by the so-called {\em local model} \cite[\S3.26--3.35]{RZ}. For brevity we recall the definition in the unramified  $\CD_{\BZ_p}$ case.

We consider the following functor from the category $\Sch_{O_E}$ of schemes over $O_E$
$$
\xymatrix{\bM^{\loc}: \Sch_{O_E}\ar[r]& Sets}
$$which associates to $S\in \Sch_{O_E}$ the set of isomorphism classes of the following data:
\begin{itemize}
\item  A locally free $\CO_B\otimes_{\BZ_p}\CO_S$-module $\CF$ on $S$: 

\item  A homomorphism of  $\CO_B\otimes_{\BZ_p}\CO_S$-modules 
$$t:\Lambda \otimes_{\BZ_p} \CO_S\to \CF.
$$
\end{itemize}
We require the following conditions to hold:
\begin{itemize}
\item   the action of $O_B$ satisfies the analog of Kottwitz condition
$$
{\rm char}(\iota(a);\CF)={\rm char}(a; V_0),\quad\forall a\in O_B.
$$ 

\item The homomorphisms $t$ is surjective.
\item In the PEL case, the isomorphism $\Lambda^\ast\simeq \Lambda$ induces the following commutative diagram
$$\xymatrix{ \Lambda\otimes_{\BZ_p} \CO_S\ar[d]^{t}\ar[r]^\sim&(\Lambda \otimes_{\BZ_p} \CO_S)^*\ar[d] \\
\CF\ar[r]^\sim & (\ker(t))^*.}
$$
(Note that the polarization is assumed to be principal.) Here $\ast$ denotes the $\CO_S$-linear dual.
\end{itemize}

It is easy to see that the functor $\bM^{\loc}$ is represented by a projective scheme over $\Spec O_{ E}$, being a closed subscheme of a product of Grassmannians. In our unramified case, it is smooth.

Let $\hat\bM^{\loc}$ denote the $p$-adic completion of the base change $\bM^{\loc}\times_{\Spec O_{ E}}\Spec O_{ \bre E}$; it is a formal scheme over $\Spf O_{ \bre E}$. Then Grothendieck--Messing theory implies that any point of $\CM$ has an \'etale neighborhood which is formally \'etale  over $\hat\bM^{\loc}$  (see \cite[\S3.32]{RZ}).
Therefore, the smoothness of the local model  $\bM^{\loc}\to\Spec O_E$ implies that $\CM$ is formally smooth over $\Spf O_{ \bre E}$.  
In the two examples \ref{ex:EL} and \ref{ex:PEL}, if the RZ data are unramified, then relative dimension of $\CM$ over $\Spf O_{\bre E}$ is given by 
\begin{align}\label{eq:dim EL}
\sum_{\varphi\in \Hom_{\BQ_p}(F,\ov\BQ_p)} r_\varphi s_{\varphi}
\end{align}
in the EL case, and 
\begin{align}\label{eq:dim PEL}
\frac{1}{2}\sum_{\varphi\in \Hom_{\BQ_p}(F,\ov\BQ_p)} r_\varphi s_{\varphi}=\sum_{\varphi_0\in \Hom_{\BQ_p}(F_0,\ov\BQ_p)} r_\varphi s_{\varphi}
\end{align}
 in the unitary PEL case,
where $\varphi\in \Hom_{\BQ_p}(F,\ov\BQ_p)$ is any extension of $\varphi_0\in \Hom_{\BQ_p}(F_0,\ov\BQ_p)$.

\begin{example}[The ``totally definite" or ``banal" case]\label{ex:def case}  In Example \ref{ex:EL} and \ref{ex:PEL}, we call an unramified $\CD_{\BZ_p}$ {\em totally definite} if 
$$
r_\varphi s_\varphi=0,\quad \forall \varphi\in \Hom_{\BQ_p}(F,\ov\BQ_p).
$$
Then the relative dimension of $\CM$ over $\Spf O_{\bre E} $ is zero and $J_b\simeq G$. There is an isomorphism of formal schemes
$$
\CM\simeq  \bigsqcup_{G(\BQ_p)/K} \Spf O_{\bre E}
$$
where $K$ is the stabilizer of $\Lambda$.
\end{example}

\section{RZ spaces: the non-reductive case}\label{s:RZ fil}
In this section we construct some generalized RZ spaces in the {\em basic} case, which may be viewed as RZ spaces with the structure group being non-reductive. The new piece of datum defining the moduli problem  is a filtration by $p$-divisible (sub)groups. These formal moduli spaces seem new and they appear to have interesting structure (e.g., even the connected components seem not understood to the author).  The moduli spaces of ``filtered $p$-divisible groups" (with EL or PEL structures)  in the {\em non-basic} case have appeared in Mantovan's work \cite{Man}. Just as in {\em loc. cit.}, here we can establish preliminary properties (representability, local model) of our moduli spaces using the strategy very similar to \cite[\S3]{RZ}.

\subsection{The filtered RZ datum: the EL and PEL case}
\label{ss:fil RZ data}
 A simple rational {\em filtered} RZ datum in the EL case is a tuple
 $$
\CD=(F,B,V, \Fil^\bullet,\{\mu\},[b]),
$$ 
where $F,B,V$ are as in the simple rational RZ datum \S\ref{ss:RZ data} and  
\begin{itemize}
\item $\Fil^\bullet$ is a filtration of finite length 
 $$
0=  \Fil^0V\subset   \Fil^1V\subset \cdots \subset \Fil^\ell V=V
 $$
by $B$-stable submodules $\Fil^i V$, 
\item $\{\mu\}$ is a conjugacy class of minuscule cocharacters $\mu:\BG_m\to H_{\ov\BQ_p}$ where $H:=\GL_B(V,\Fil^\bullet)$ is the stabilizer of the filtration  $ \Fil^\bullet$, hence a parabolic subgroup of $G=\GL_B(V)$.
\item $[b]$ is a $\sigma$-conjugacy class of $H(\bre \BQ_p)$. 
\end{itemize}
The filtered RZ datum $\CD$ induces (unfiltered) RZ data
$$
\Fil^i\CD=(F,B, \Fil^i V,\{\Fil^i\mu\},[\Fil^ib]),\quad i=1, \cdots,\ell,
$$
where $\Fil^iG:=\GL_B(\Fil^i V)$, $\Fil^i\mu: \BG_m\to \Fil^iG_{\ov\BQ_p}$, and $[\Fil^ib]\in B(\Fil^iG)$
 are induced by $\{\mu\}$ and $[b]$. Similarly we have induced RZ data
 $$
\gr^i\CD=(F,B, \gr^i V,\{\gr^i\mu\},[\gr^ib]),\quad i=1, \cdots,\ell,
$$ where $\gr^iV:=\Fil^{i}V/\Fil^{i-1} V$ is the $i$-th graded piece, $\gr^iG:=\GL_B(\gr^i V)$, $\gr^i\mu :  \BG_m\to \gr^iG_{\ov\BQ_p}$, and $[\gr^ib]\in B(\gr^iG)$ are induced by $\{\mu\}$ and $[b]$.
 
A simple  {\em filtered} rational RZ datum in the PEL case is a tuple 
$$
\CD=(F,B,V,\Fil^\bullet,(\cdot,\cdot),\ast,\{\mu\},[b]),
$$
 where $F,B,V, \Fil^\bullet, \{\mu\},[b]$ are as in the filtered EL case (but for $H$ defined below), $(\cdot,\cdot), \ast$ are as in the (unfiltered) PEL case, such that 
\begin{itemize}
\item $\Fil^\bullet$ is {\em self-dual}, in the sense that, for every $i=0,\cdots,\ell$, the space $\Fil^i V$ is the exact annihilator of $\Fil^{\ell-i}V$ under the alternating form $(\cdot,\cdot)$ on $V$,
\item  $H:=\GL_B(V, \Fil^\bullet,(\cdot,\cdot))$, a parabolic subgroup of $G=\GL_B(V,(\cdot,\cdot))$.
\end{itemize}
We have induced RZ data of PEL type 
$$
\Fil^{i,\ell-i}\CD=(F,B, \Fil^{i,\ell-i}V,(\cdot,\cdot),\ast,\{\Fil^{i,\ell-i}\mu\},[\Fil^{i,\ell-i}b]),\quad i=1, \cdots,[\ell/2],
$$
with the induced alternating pairing $(\cdot,\cdot)$ on $ \Fil^{i,\ell-i}V:=\Fil^{\ell-i} V/\Fil^i V$.
We also have RZ data of EL type 
$$
\gr^i\CD=(F,B, \gr^i V,\{\gr^i\mu\},[\gr^ib]),\quad i=1, \cdots,[\ell/2],
$$ 
and, if $\ell$ is odd, an RZ datum of PEL type
$$
\gr^{[(\ell+1)/2]}\CD=(F,B, \gr^{[(\ell+1)/2]} V,(\cdot,\cdot),\ast, \{\gr^{[(\ell+1)/2]}\mu\},[\gr^{[(\ell+1)/2]}b]).
$$

Now we turn to the integral datum. A  simple  integral filtered RZ datum $\CD_{\BZ_p}$ in the EL case consists, in addition to the rational data $\CD$, of a maximal order $O_B$ in $B$ and an $O_B$-stable lattice $\Lambda$ in $V$. A  simple integral filtered RZ datum $\CD_{\BZ_p}$ in the PEL case consists, in addition to the rational data $\CD$, of a maximal order $O_B$ in $B$ which is stable under the involution $\ast$ and an $O_B$-stable lattice $\Lambda$ in $V$ such that $\varpi\Lambda\subset\Lambda^\vee\subset\Lambda$.
In the EL case, the $O_B$-stable $\Lambda$ induces $O_B$-stable lattice $\Fil^i\Lambda:=\Lambda\cap \Fil^iV\subset \Fil^i V$ and $\gr^i\Lambda:=\Fil^i\Lambda /\Fil^{i-1}\Lambda\subset \gr^i V$. Therefore, we obtain induced simple integral RZ data $\Fil^i\CD_{\BZ_p}$ and $\gr^i\CD_{\BZ_p}$ of EL type. Similarly in the PEL case, we define $\Fil^{i,\ell-i}\Lambda:=\Fil^{\ell-i}\Lambda/\Fil^i\Lambda\subset \Fil^{i,\ell-i} V$. Then, by the following lemma, we obtain simple  simple integral RZ data $\Fil^{i,\ell-i}\CD_{\BZ_p}$ and $\gr^i\CD_{\BZ_p}$. Moreover, if $\CD_{\BZ_p}$ is unramified, so are all of the induced data.
\begin{lemma}\label{lem: lat}
In the PEL case, let $\Lambda\subset V$ be an $O_B$-stable lattice such that $\Lambda\subset\Lambda^\vee\subset \varpi^{-1}\Lambda$.
Then, for all $i=1, \cdots,[\ell/2]$, $\Fil^{i,\ell-i}\Lambda$ is an $O_B$-stable lattice in $\Fil^{i,\ell-i}V$ such that 
$$\Fil^{i,\ell-i}\Lambda\subset(\Fil^{i,\ell-i}\Lambda)^\vee\subset \varpi^{-1}\Fil^{i,\ell-i}\Lambda.
$$
Moreover, if $\Lambda=\Lambda^\vee$, then $\Fil^{i,\ell-i}\Lambda=(\Fil^{i,\ell-i}\Lambda)^\vee$.
\end{lemma}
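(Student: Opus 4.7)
The plan is to reduce both containments to a single lifting statement: every element of $L^\vee := (\Fil^{i,\ell-i}\Lambda)^\vee$ inside $W := \Fil^{i,\ell-i}V$ admits a representative in $\Lambda^\vee \cap \Fil^{\ell-i}V$. Note first that the self-duality of $\Fil^\bullet$ makes the alternating form on $V$ descend to a non-degenerate alternating form on $W$, with respect to which $L^\vee$ is computed. Granting the lifting claim, the inclusion $\Fil^{i,\ell-i}\Lambda \subset L^\vee$ is immediate from $\Fil^{\ell-i}\Lambda \subset \Lambda \subset \Lambda^\vee$; and for $\bar x \in L^\vee$ with a lift $x \in \Lambda^\vee \cap \Fil^{\ell-i}V$, the hypothesis $\varpi\Lambda^\vee \subset \Lambda$ gives $\varpi x \in \Lambda \cap \Fil^{\ell-i}V = \Fil^{\ell-i}\Lambda$, whose image in $W$ realises $\varpi\bar x$ as an element of $\Fil^{i,\ell-i}\Lambda$. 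The self-dual case $\Lambda = \Lambda^\vee$ falls out of the same analysis, since the lift then already lies in $\Fil^{\ell-i}\Lambda$.

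The heart of the argument is therefore the lifting claim. Starting from an arbitrary lift $x_0 \in \Fil^{\ell-i}V$ of $\bar x$, the defining condition reads $(x_0, \Fil^{\ell-i}\Lambda) \subset \BZ_p$; the obstruction to $x_0 \in \Lambda^\vee$ lies in the behaviour of $(x_0, \cdot)$ on a complement of $\Fil^{\ell-i}\Lambda$ in $\Lambda$, and I plan to absorb it by subtracting a suitable $z \in \Fil^i V$. Such a modification is inert from the perspective of both $W$ and $\Fil^{\ell-i}\Lambda$, thanks to the self-duality relation $(\Fil^i V, \Fil^{\ell-i}V) = 0$. Concretely, I would fix a $\BZ_p$-splitting $\Lambda = \Fil^{\ell-i}\Lambda \oplus C$ (possible since $\Fil^{\ell-i}\Lambda$ is saturated in $\Lambda$), extend $(x_0, \cdot)|_C$ $\BQ_p$-linearly to a functional $\tilde f$ on $V/\Fil^{\ell-i}V$, and invoke the perfect pairing $\Fil^i V \times V/\Fil^{\ell-i}V \to \BQ_p$ to produce the unique $z \in \Fil^i V$ with $(z, \cdot) = \tilde f$. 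Then $x := x_0 - z$ pairs trivially with $C$ and integrally with $\Fil^{\ell-i}\Lambda$, hence lies in $\Lambda^\vee \cap \Fil^{\ell-i}V$ and still represents $\bar x$.

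The only substantive input is that the pairing $\Fil^i V \times V/\Fil^{\ell-i}V \to \BQ_p$ is perfect, but this is a direct consequence of the self-duality identity $\Fil^i V = (\Fil^{\ell-i}V)^\perp$ combined with the non-degeneracy of $(\cdot,\cdot)$ on $V$. The remaining bookkeeping---the $O_B$-stability of $\Fil^{i,\ell-i}\Lambda$ and the fact that it is a $\BZ_p$-lattice in $W$---is formal from the saturation of each $\Fil^j \Lambda$ in $\Lambda$ and the $O_B$-stability of each $\Fil^j V$. I do not anticipate any serious obstacle beyond these checks.
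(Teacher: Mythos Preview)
Your argument is correct, but it follows a different route from the paper's.

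The paper sets up the commutative diagram of $O_B$-modules
\[
\xymatrix{ 0 \ar[r]&\Fil^{i}\Lambda\ar[d]^\beta \ar[r]&\Lambda\ar[d]^\alpha\ar[r]& \Lambda/ \Fil^{i}\Lambda\ar[r]\ar[d]^{\beta^*}& 0 \\
0 \ar[r]&(\Lambda/ \Fil^{i}\Lambda)^* \ar[r]&\Lambda^*\ar[r]&(\Fil^{i}\Lambda)^\ast\ar[r]& 0,}
\]
identifies $\ker(\beta^*)\simeq\Fil^{i,\ell-i}\Lambda$ and $\mathrm{coker}(\beta)\simeq(\Fil^{i,\ell-i}\Lambda)^\vee$, and then reads off from the snake lemma an injection of $O_B$-modules $(\Fil^{i,\ell-i}\Lambda)^\vee/\Fil^{i,\ell-i}\Lambda \hookrightarrow \Lambda^\vee/\Lambda$. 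Both conclusions (annihilation by $\varpi$, and vanishing when $\Lambda=\Lambda^\vee$) are then immediate. Your approach instead proves directly that the projection $\Lambda^\vee\cap\Fil^{\ell-i}V\to(\Fil^{i,\ell-i}\Lambda)^\vee$ is surjective, by choosing a $\BZ_p$-complement $C$ of $\Fil^{\ell-i}\Lambda$ in $\Lambda$ and killing the ``bad'' part of the pairing via a correction from $\Fil^iV$, using only the perfect $\BQ_p$-duality $\Fil^iV\times V/\Fil^{\ell-i}V\to\BQ_p$. This is more hands-on and avoids any identification of cokernels; the paper's method is slightly more structural and gives the extra information that the discriminant module of $\Fil^{i,\ell-i}\Lambda$ is an $O_B$-subquotient of that of $\Lambda$.
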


\begin{proof}
For a $\BZ_p$-lattice $L$ we will denote
$L^*=\Hom_{\BZ_p}(L,\BZ_p)$ the $\BZ_p$-linear dual. If $L$ is an $O_B$-lattice, we endow $L^\ast$ the twist of the $O_B$-action by the involution $\ast$. Then the alternating form $(\cdot,\cdot)$ induces an $O_B$-linear map 
$\alpha:\Lambda\to \Lambda^\ast$ and an $O_B$-linear isomorphism $\Lambda^\ast\simeq \Lambda^\vee$.  Since $\Fil^i V$ is totally isotropic, $\alpha$ induces an $O_B$-linear map   $\beta: \Fil^{i}\Lambda\to (\Lambda/ \Fil^{i}\Lambda)^*$, which then  induces an $O_B$-linear map   $\beta^\ast:(\Lambda/ \Fil^{i}\Lambda)^*\to(( \Fil^{i}\Lambda)^\ast)^\ast\simeq \Fil^{i}\Lambda$. We obtain a commutative diagram:
$$
\xymatrix{ 0 \ar[r]&\Fil^{i}\Lambda\ar[d]^\beta \ar[r]&\Lambda\ar[d]^\alpha\ar[r]& \Lambda/ \Fil^{i}\Lambda\ar[r]\ar[d]^{\beta^*}& 0 \\
0 \ar[r]&(\Lambda/ \Fil^{i}\Lambda)^* \ar[r]&\Lambda^*\ar[r]&(\Fil^{i}\Lambda)^\ast\ar[r]& 0.
}
$$
Now note that $\ker(\beta^*)=\Fil^{\ell-i}\Lambda/\Fil^i\Lambda= \Fil^{i,\ell-i}\Lambda$ and hence ${\rm coker}(\beta)\simeq (\Fil^{i,\ell-i}\Lambda)^*\simeq (\Fil^{i,\ell-i}\Lambda)^\vee$.
By the snake lemma, we obtain a long exact sequence of $O_B$-modules
$$
\xymatrix{  0 \ar[r]& \Fil^{i,\ell-i}\Lambda\ar[r]
&(\Fil^{i,\ell-i}\Lambda)^\vee \ar[r]&\Lambda^\vee/\Lambda\ar[r]&{\rm coker}(\beta^*)\ar[r]& 0,
}
$$
or equivalently, 
$$
\xymatrix{  0 \ar[r]& 
(\Fil^{i,\ell-i}\Lambda)^\vee/ \Fil^{i,\ell-i}\Lambda \ar[r]&\Lambda^\vee/\Lambda\ar[r]&{\rm coker}(\beta^*)\ar[r]& 0.
}
$$
If $ \Lambda^\vee/\Lambda$ is annihilated by $\varpi\in O_B$, so is $(\Fil^{i,\ell-i}\Lambda)^\vee/ \Fil^{i,\ell-i}\Lambda$. 
If  $ \Lambda^\vee/\Lambda$ vanishes, so does $(\Fil^{i,\ell-i}\Lambda)^\vee/ \Fil^{i,\ell-i}\Lambda$.

\end{proof}

In both cases, we will again assume that the induced $\sigma$-conjugacy class of $G(\bre \BQ_p)$ is {\em basic}. Since this means (in our EL and PEL case) that the associated isocrystal is isoclinic, the induced  isocrystals on $\Fil^{i,\ell-i}V\otimes_{\BQ_p}\bre\BQ_p$ and $\gr^i V\otimes_{\BQ_p}\bre\BQ_p$ are automatically isoclinic and hence the induced $\sigma$-conjugacy classes $\Fil^{i,\ell-i}b$ and $\gr^ib$  are all basic.

Similar to \eqref{eq:Jb} we can define a functor $J_b$. Though we have not checked the detail, it is reasonable to expect that $J_b$ is represented by a smooth affine group scheme over $\BQ_p$ and it is a parabolic subgroup of $J_{\Fil ^{0,\ell}b}$. Nevertheless, one can verify this assertion in all the examples below \ref{ex:EL fil} and \ref{ex:PEL fil}.

\begin{example}[EL case]\label{ex:EL fil}
Let $B=F, V,G, \Lambda$  be as in Example \ref{ex:EL}.  Let $\Fil^\bullet V$ be any filtration of $V$ and let $H=\GL_B(V,\Fil^\bullet)$. Then we may arrange a cocharacter of $G$ to factor through $\mu: \BG_m\to H$. Let $(r^i_\varphi,s^i_\varphi)_{\varphi\in \Hom_{\BQ_p}(F,\ov\BQ_p)}$ be the invariants associated to the induced data on $\gr^iV$. In particular, we have
 $$
 r_\varphi=\sum_i r^i_\varphi ,\quad s_\varphi=\sum_i s^i_\varphi ,\quad \forall \varphi\in \Hom_{\BQ_p}(F,\ov\BQ_p).
 $$

\end{example}
\begin{example}[unitary PEL case]\label{ex:PEL fil}
Let $B=F, V,(\cdot,\cdot), G, \Lambda$  be as in Example \ref{ex:PEL}. Let $\Fil^\bullet V$ be a self-dual filtration of $V$ and let $H=\GL_B(V,(\cdot,\cdot), \Fil^\bullet)\subset G=\GL_B(V,(\cdot,\cdot))$. Let  $\mu: \BG_m\to H_{\ov\BQ_p}$ be a cocharacter.  Let $(r^i_\varphi,s^i_\varphi)_{\varphi\in \Hom_{\BQ_p}(F,\ov\BQ_p)}$ be the invariants associated to the induced data on $\gr^iV$.   Then  for all $\varphi\in \Hom_{\BQ_p}(F,\ov\BQ_p)$
 $$
 r_\varphi=\sum_i r^i_\varphi ,\quad s_\varphi=\sum_i s^i_\varphi,
 $$ 
 and 
 $$
 r^i_\varphi=s^{\ell+1-i}_{\ov{\varphi}}.
 $$
We remark that there are some restrictions to the existence of above $\Fil$ and $\mu$ due to the restriction to the weights and the compatibility with the similitude character. For example, when $F_0=\BQ_p$ and $ \dim_F V=2$ there does not exist any nontrivial filtered data (besides the ``unfiltered" one) satisfying all of the conditions. 
\end{example}

\subsection{The formal moduli spaces in the non-reductive case}
\label{ss:RZ fil}
We fix a simple integral filtered RZ data $\CD_{\BZ_p}$ for the rest of this section. In view of the  possible complexity of the induced data, as illustrated  by Lemma \ref{lem: lat}, we assume that $\CD_{\BZ_p}$ is {\em unramified}. In particular, $B=F$ is an unramified field extension of $\BQ_p$, the reflex field $E$ is unramified over $\BQ_p$ and hence $\bre E=\bre \BQ_p$. We consider tuples $(X,\Fil^\bullet X, \iota)$ in the EL case, and $(X,\Fil^\bullet X,\iota,\lambda)$ in the PEL case, where the unfiltered data are the same as before, and 
\begin{itemize}
\item $\Fil^\bullet X$  is a filtration of $X$ by $O_B$-stable $p$-divisible subgroups
$$
0=  \Fil^0X\subset   \Fil^1V\subset \cdots \subset \Fil^\ell X=X.
 $$
\item In the PEL case, a {\em principal} polarization $\lambda: X\to X^\vee$ such  that the Rosati involution on $O_B$ coincides with $\ast$. (The requirement of the polarization being principal is due to our assumption on the unramifiedness of the integral datum.)
\end{itemize}
We impose the Kottwitz condition for every  $i=0,\cdots,\ell$,  
$$
 \charac \bigl(\iota(a);\Lie(\Fil^iX)\bigr)= \charac \bigl(a; (\Fil^iV)_0\bigr),\quad\forall a\in O_B,
$$ 
where $(\Fil^iV)_0$ is the weight zero subspace of $\Fil^iV$.  In the PEL case, we require that the filtration $\Fil^\bullet$ is {\em self-dual}, in the sense that, for every $i=0,\cdots,\ell$, we have an exact sequence (of fppf abelian sheaves) induced by the polarization $\lambda$:
$$
\xymatrix{  0 \ar[r]& 
\Fil^{\ell-i} X \ar[r]& X\simeq X^\vee\ar[r]&(\Fil^iX)^\vee\ar[r]& 0.
}
$$

Next we fix a {\em framing object} $(\BX,\Fil^\bullet\BX,\iota_\BX)$ over $\Spec\BF$  in the EL case (resp. a triple $(\BX,\Fil^\bullet\BX,\iota_\BX,\lambda_\BX)$ in the PEL case). We assume that the induced isocrystal with the filtration, with the $O_B$-action (and the alternating form induced by the polarization in the PEL case) is isomorphic to $(V\otimes_{\BQ_p}\bre\BQ_p, \Fil^\bullet V\otimes_{\BQ_p}\bre\BQ_p, b\sigma)$ (in the PEL case, $(V\otimes_{\BQ_p}\bre\BQ_p,  \Fil^\bullet V\otimes_{\BQ_p}\bre\BQ_p,(\cdot,\cdot),b\sigma)$ with the alternating form preserved up to a $\bre\BQ_p^\times$-factor).

We now state the definition of the moduli functor analogous to that of Rapoport--Zink, in the EL case:
$$
\xymatrix{\CM: \Nilp\ar[r]& Sets},
$$
which associates to $S\in\Nilp$ the set of isomorphism classes $(X,\Fil^\bullet X,\iota, \rho)$ where $(X,\Fil^\bullet X,\iota)$ is as above and $$
\xymatrix{\rho: X\times_S \ov S\ar[r]& \BX\times_{\Spec\BF}\ov S }
$$ is an $O_B$-linear quasi-isogeny that preserves the filtration $\Fil^\bullet X$ and $\Fil^\bullet \BX$. In the PEL case, $\CM(S)$ is the set of isomorphism classes $(X,\Fil^\bullet X,\iota, \lambda,\rho)$, where $(X,\Fil^\bullet X,\iota,\lambda)$ is as above and $$
\xymatrix{\rho: X\times_S \ov S\ar[r]& \BX\times_{\Spec\BF}\ov S }
$$
 is an $O_B$-linear quasi-isogeny that  preserves the filtration $\Fil^\bullet X$ and $\Fil^\bullet \BX$ and preserves the polarizations up to a factor in
$\BQ_p^\times$, locally on $\ov S$.

The following result follows essentially from the representability theorems in \cite[\S2,~\S3]{RZ}. Note that we have assumed that $\CD_{\BZ_p}$ is unramified.
\begin{theorem}\label{th:RZ fil}
The functor $\CM$ is (pro-)representable by a formal scheme, formally  locally of finite type, and formally smooth over $\Spf O_{\bre E}$.
\end{theorem}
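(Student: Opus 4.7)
The plan is to adapt Rapoport--Zink's proof of Theorem~\ref{th:RZ} in \cite[\S2--3]{RZ} to the filtered setting, along the lines of Mantovan's treatment of filtered $p$-divisible groups in \cite{Man}. The argument proceeds by realizing $\CM$ as a closed formal subscheme of an unfiltered RZ space, and then constructing a filtered local model to obtain formal smoothness.

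First, I would introduce the forgetful morphism $\CM \to \CM^{\natural}$, where $\CM^{\natural}$ denotes the unfiltered RZ space associated to the underlying datum $\Fil^\ell \CD_{\BZ_p} = \CD_{\BZ_p}$. By Theorem~\ref{th:RZ}, $\CM^{\natural}$ is representable by a formal scheme, formally locally of finite type over $\Spf O_{\bre E}$. The rigidity of quasi-isogenies of $p$-divisible groups under infinitesimal thickenings (\cite[Prop.~2.9]{RZ}) implies that a filtration $\Fil^\bullet X$ of $X$ by $p$-divisible subgroups compatible with $\rho$ and the fixed filtration $\Fil^\bullet \BX$, if it exists, is uniquely determined by $(X,\iota,\rho)$. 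Consequently, $\CM \to \CM^{\natural}$ is a monomorphism of functors on $\Nilp$. To promote this to a closed immersion I would invoke Grothendieck--Messing theory: a sub-$p$-divisible group $\Fil^i X \subset X$ corresponds to a locally direct summand sub-bundle of the Hodge filtration quotient $\CF$ lifting a prescribed sub-crystal (arising from $\Fil^i \BX$ and $\rho$) and satisfying the Kottwitz condition for $(\Fil^iV)_0$. The existence of such a lift is a closed condition inside the Grassmannian cutting out the unfiltered local model; iterating over $i$, and incorporating the self-duality from Lemma~\ref{lem: lat} in the PEL case, gives closedness of the image of $\CM$ in $\CM^{\natural}$.

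For the formal local finite-typeness and formal smoothness, I would introduce the filtered local model $\bM^{\loc, \Fil}$ parametrizing tuples $(\CF, \Fil^\bullet \CF, t)$ where $(\CF, t)$ is as in \S\ref{ss:LM} and $\Fil^\bullet \CF$ is an $O_B \otimes \CO_S$-stable filtration with $\Fil^i \CF = t(\Fil^i \Lambda \otimes \CO_S)$ satisfying the Kottwitz condition for $(\Fil^i V)_0$ (and, in the PEL case, the self-duality diagram of \S\ref{ss:LM} compatibly with the filtration). Since $\CD_{\BZ_p}$ is unramified, the $F$-isotypic decomposition of $\Lambda$ and $\CF$ reduces $\bM^{\loc, \Fil}$ to a product, over embeddings $\varphi \colon F \hookrightarrow \ov\BQ_p$, of smooth projective partial flag varieties refining the unfiltered local model $\bM^{\loc}$. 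Applying Grothendieck--Messing as in \cite[\S3.32]{RZ} then identifies $\CM$ étale-locally with the $p$-adic completion of $\bM^{\loc, \Fil} \times_{\Spec O_E} \Spec O_{\bre E}$, from which the conclusion of the theorem follows.

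The main obstacle, as I foresee it, lies in the third step: verifying the smoothness of $\bM^{\loc, \Fil}$ in the PEL case, where the self-duality of the filtration interacts nontrivially with the alternating form and the polarization. The unramified hypothesis is essential here, since the $\varphi$-isotypic splitting reduces the question to classical partial flag varieties equipped with a twisted involution pairing $\varphi \leftrightarrow \ov\varphi$, whose smoothness can be checked directly. The Grothendieck--Messing descent in the second step is essentially parallel to that of \cite[\S3]{RZ}, but one must verify that rigidity applies uniformly to the sub-$p$-divisible groups at each level of the filtration -- a point that appears routine but deserves care.
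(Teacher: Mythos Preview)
Your proposal contains a genuine error: the forgetful morphism $\CM \to \CM^\natural$ is \emph{not} a closed immersion. While your rigidity argument does show it is a monomorphism, the image is only locally closed in $\CM^\natural$. The paper makes this explicit in the remark following Theorem~\ref{th:RZ fil}: the irreducible components of $\CM^\red$ need not be proper, whereas in the basic case those of $\CM^{\natural,\red}$ are. Concretely, Propositions~\ref{p:fil2KR} and~\ref{p:dag red} identify a fiber of $\CN_P\to\CN_{1^r}$ with the \emph{open} formal subscheme $\CZ(\CE)^\dag=\CZ(\CE)\setminus\bigcup_{\CE\subsetneq\CE'\subset E}\CZ(\CE')$ of the KR cycle $\CZ(\CE)\subset\CN_n$. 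The same error reappears in your description of the local model: the condition that the images $t(\Fil^i\Lambda\otimes\CO_S)$ be locally direct summands of $\CF$ of the prescribed rank is an \emph{open} condition, not a closed one. Consequently the filtered local model is only quasi-projective (see Lemma~\ref{lem:LM fil} and the condition $(\heart)$ in its proof), not a ``product of smooth projective partial flag varieties'' as you assert.

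The paper's proof avoids this by mapping not to $\CM^\natural$ but to the product $\prod_{i=1}^\ell \CM_i$ of the unfiltered RZ spaces attached to $\Fil^i\CD_{\BZ_p}$. It factors $\CM\hookrightarrow\CM^\bullet\hookrightarrow\prod_i\CM_i$, where $\CM^\bullet$ relaxes the requirement that each $\Fil^iX\to\Fil^{i+1}X$ be a monomorphism to merely being a homomorphism. Then $\CM^\bullet$ is closed in the product by \cite[Prop.~2.9]{RZ} (the locus where a fixed quasi-homomorphism lifts to an honest homomorphism is closed), and $\CM$ is open in $\CM^\bullet$ (injectivity is an open condition). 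This yields representability. Smoothness then follows from the filtered local model via an inductive fibration over the local models of the graded pieces $\gr^i\CD_{\BZ_p}$ (Lemma~\ref{lem:LM fil}), whose fibers are open subschemes of products of Grassmannians rather than partial flag varieties.
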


\begin{proof}
We will defer the proof of the formal smoothness after we introduce the local model. 

We consider the EL case and the same argument applies to the PEL case. 
We consider the RZ moduli functors $\CM_i$ for the framing objects 
 $(\Fil^i\BX,\iota_{\Fil^i \BX})$, $i=1,\cdots,\ell$. In view of the representability theorem \ref{th:RZ} of Rapoport--Zink,  it suffices to consider the relatively representability of the natural map
 \begin{align}\label{eq:j}
 \xymatrix{j:\CM\ar[r]& \prod_{i=1}^\ell \CM_i. }
 \end{align}
 The map $j$ factorizes through $\CM^\bullet$ which parameterize similar data as $\CM$ except that $\Fil^iX\to \Fil^{i+1}X$ is not required to be  injective (as a morphism of fppf abelian sheaves). Then $\CM^\bullet$ is the locus where the quasi-homomorphisms $\Fil^i\BX\to \Fil^{i+1}\BX$ pull back to homomorphisms $\Fil^i X\to \Fil^{i+1} X$. By \cite[Prop.~2.9]{RZ}\footnote{Note that \cite[Prop.~2.9]{RZ} is stated only for quasi-isogenies; but the proof applies verbatim to  quasi-homomorphisms.},  $\CM^\bullet$ is a closed formal subscheme of $\prod_{i=1}^\ell \CM_i $.  Finally the injectivity imposes an open condition and hence  $\CM$ is an open formal subscheme of $\CM^\bullet$. The desired result follow.
 
\end{proof}

\begin{remark}Unlike the reductive case of RZ spaces, the irreducible components of the reduced scheme of $\CM$ are not necessarily proper.
The moduli functor  $\CM^\bullet$ may be viewed as a partial ``compactification" of $\CM$. 
\end{remark}
\begin{remark}Closely related to the above constructions, 
one may also consider
moduli functors in the {\em degenerate} PEL case, i.e., the alternative form on $V$ is degenerate. In fact, from the filtered rational RZ datum $\CD$ of PEL type, one has an induced datum from $\Fil^{\ell-1}V$, which carries a degenerate alternating form.
\end{remark}

\subsection{Connected components}
As a natural question, what is the set of connected components of $\CM$ defined in the last subsection? Similar questions for RZ spaces (and generalizations to local Shimua varieties in \cite{RV} with parahoric level structure) have received considerable attention in recent years. For the moduli space defined here, it seems unclear what the answer should be. Nevertheless we can partly answer the question in some special examples related to the unitary RZ spaces, where we have a very explicit description of the reduced scheme (via the BT-stratification), see \S\ref{ss:pi 0}.

\subsection{Local models}\label{ss:LM fil}Analogous to \S\ref{ss:LM},  the local model construction in \cite[\S3.26]{RZ} can be carried over to the fixed unramified integral filtered RZ data $\CD_{\BZ_p}$. We denote $\Lambda^i=\Fil^i\Lambda:=\Fil^iV\cap\Lambda$, an $O_B$-stable lattice. In the PEL case, the unramified hypothesis implies that there is a short exact sequence of $O_B$-modules
\begin{align}\label{eq:lat}
\xymatrix{0\ar[r]&\Lambda^{\ell-i} \ar[r]& \Lambda\simeq\Lambda^* \ar[r]&( \Lambda^{i})^*\ar[r]&0}.
\end{align}

We consider the following functor from the category $\Sch_{O_E}$ of schemes over $O_E$
$$
\xymatrix{\bM^{\loc}: \Sch_{O_E}\ar[r]& Sets},
$$which associates to $S\in \Sch_{O_E}$ the set of isomorphism classes of the following data:
\begin{itemize}
\item  A flag of locally free $\CO_B\otimes_{\BZ_p}\CO_S$-modules $\CF^\bullet$ on $S$: 
$$
0=\CF^0\subset \CF^1\subset \cdots\subset \CF^\ell=\CF.
$$
\item  Homomorphisms of  $\CO_B\otimes_{\BZ_p}\CO_S$-modules 
$$t_i:\Lambda^i\otimes_{\BZ_p} \CO_S\to \CF^i, \quad i=1,\cdots,\ell.
$$
\end{itemize}
We require the following conditions to hold:
\begin{itemize}
\item  all $\CF^i$ are subbundles of $\CF$ (i.e., $\CF/\CF^i$ are locally free), and the action of $O_B$ satisfies the analog of Kottwitz condition
$$
 \charac \bigl(a;\CF_i\bigr)= \charac \bigl(a; (\Fil^iV)_0\bigr),\quad\forall a\in O_B,
$$ 

\item The homomorphisms $t_i$ are all surjective and the following diagram commutes$$
\xymatrix{ \Lambda^1\otimes_{\BZ_p} \CO_S \ar@{->>}[d]^{t_1} \ar@{^(->}[r]&\cdots  \ar@{^(->}[r]&  \Lambda^i\otimes_{\BZ_p} \CO_S \ar@{->>}[d]^{t_i}   \ar@{^(->}[r] & \cdots\ar@{^(->}[r]   & \Lambda^\ell\otimes_{\BZ_p} \CO_S  \ar@{->>}[d]^{t_\ell}\\
\CF^1\ar@{^(->}[r]&\cdots \ar@{^(->}[r]&\CF^i \ar@{^(->}[r]&\cdots\ar@{^(->}[r]&\CF^\ell=\CF.
}
$$
In particular, when $i\leq j$, $\CF^i$ the image of $ \Lambda^i\otimes_{\BZ_p} \CO_S$ under $t_j$,  and hence $(t_i,\CF^i)$ is determined by $(t_j,\CF^j)$.
\item In the PEL case, the isomorphism $\Lambda^\ast\simeq \Lambda$ induces the following commutative diagram
$$\xymatrix{ \Lambda\otimes_{\BZ_p} \CO_S\ar[d]^{t_\ell}\ar[r]^\sim&(\Lambda \otimes_{\BZ_p} \CO_S)^*\ar[d] \\
\CF\ar[r]^\sim & (\ker(t_\ell))^*.}
$$
(Note that our polarization is assumed to be principal.) Here $\ast$ denotes the $\CO_S$-linear dual.
Together with \eqref{eq:lat} we have a commutative diagram for all $i=1,\cdots,\ell$:
\begin{align}
\xymatrix{ 0\ar[r]&\Lambda^{\ell-i}\otimes_{\BZ_p} \CO_S \ar[d] \ar[r]&\Lambda\otimes_{\BZ_p} \CO_S \simeq ( \Lambda \otimes_{\BZ_p}\CO_S)^*\ar[d]\ar[r]&( \Lambda^i\otimes_{\BZ_p} \CO_S )^* \ar[d]\ar[r]&0\\
 0\ar[r]&\CF^{\ell-i} \ar[r]&\CF\simeq  (\ker(t_\ell))^*\ar[r]&  (\ker(t_i))^* \ar[r]&0}
\end{align}
where the bottom row is required to be exact. 
\end{itemize}

The functor $\bM^{\loc}$ is represented by a quasi-projective scheme over $\Spec O_{ E}$. Unlike the reductive case in \cite{RZ} (cf. \S\ref{ss:LM}), it is {\em not necessarily} projective, as we will see from the proof of Lemma \ref{lem:LM fil} below.   

We consider the (unfiltered) RZ data $\gr^i\CD_{\BZ_p}$ associated to $ \gr^i V$, $1\leq i\leq \ell$ in the EL case and $1\leq i\leq [\frac{\ell+1}{2}]$ in the PEL case (cf. \S\ref{ss:fil RZ data}). Let $\bM^{\loc}_{i}$ be the corresponding local models as defined in \S\ref{ss:LM}. Then we have a natural morphism 
$$
\xymatrix{\bM^{\loc}\ar[r]&\prod_{i} \bM^{\loc}_{i}.}
$$
Note that each factor in the right hand side is the usual local model in \cite{RZ} and is smooth over $\Spec O_E$ under our assumption of unramifiedness of the datum $\CD_{\BZ_p}$.

\begin{lemma}\label{lem:LM fil}
The morphism $\bM^{\loc}\to\prod_{i} \bM^{\loc}_{i}$ is smooth and quasi-projective. In particular, 
the scheme $\bM^{\loc}$ is smooth over $\Spec O_{ E}$.
\end{lemma}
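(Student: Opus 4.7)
The plan is to verify formal smoothness of the morphism $f:\bM^{\loc}\to\prod_i\bM^{\loc}_i$ via the infinitesimal lifting criterion, to check quasi-projectivity by realizing $\bM^{\loc}$ as a locally closed subscheme of a projective bundle over $\prod_i\bM^{\loc}_i$, and then to deduce smoothness of $\bM^{\loc}$ over $\Spec O_E$ by composing $f$ with the smooth structure morphism $\prod_i\bM^{\loc}_i\to\Spec O_E$ recorded in \S\ref{ss:LM} under our unramifiedness hypothesis.

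For the smoothness of $f$, I would fix a square-zero thickening $S_0\hookrightarrow S$ of affine $O_E$-schemes with ideal $\sI$, a filtered datum $(\CF^\bullet_0,t_{\bullet,0})\in\bM^{\loc}(S_0)$, and a lift of its associated graded to a point $(\gr^i\CF_S,\gr^i t_S)_i\in\prod_i\bM^{\loc}_i(S)$, and then build a lift $(\CF^\bullet_S,t_{\bullet,S})\in\bM^{\loc}(S)$ inductively: set $\CF^1_S:=\gr^1\CF_S$ and $t_{1,S}:=\gr^1 t_S$, and at stage $i\to i+1$ produce a short exact sequence $0\to\CF^i_S\to\CF^{i+1}_S\to\gr^{i+1}\CF_S\to 0$ of $\CO_B\otimes\CO_S$-modules lifting the given sequence on $S_0$, together with a compatible surjection $t_{i+1,S}:\Lambda^{i+1}\otimes_{\BZ_p}\CO_S\to\CF^{i+1}_S$ lifting $t_{i+1,0}$. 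Both lifts are unobstructed because the relevant obstructions live in $\text{Ext}^1$-groups between locally free $\CO_B\otimes\CO_S$-modules tensored with $\sI$ over the affine base $S$, which vanish; the Kottwitz condition on $\CF^{i+1}_S$ is automatic since characteristic polynomials are additive on short exact sequences. In the PEL case, I would perform the induction only for $i\le\lfloor\ell/2\rfloor$ and then determine the upper half of the filtration by the self-duality diagram \eqref{eq:lat}; when $\ell$ is odd, the middle graded piece is itself a PEL datum whose self-duality is already encoded in the corresponding factor of $\prod_i\bM^{\loc}_i$, so no additional choice is required.

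For quasi-projectivity, identifying each pair $(\CF^i,t_i)$ with the subbundle $\ker(t_i)\subset\Lambda^i\otimes\CO_S$ embeds $\bM^{\loc}$ as a locally closed subscheme of a fiber product over $\prod_i\bM^{\loc}_i$ of Grassmannian bundles parametrizing subbundles of $\Lambda^i\otimes\CO_S$ of the prescribed rank; compatibility between the various $\ker(t_i)$ is a closed condition, while the subbundle and surjectivity requirements are open, which yields the asserted quasi-projectivity. The step I expect to be the main obstacle is checking that the PEL self-duality near the middle of the filtration is genuinely respected by the inductive lifting—one must verify that the dualised data obtained from the lower half agrees with the compatibility constraints imposed at the middle stage by the PEL factor $\bM^{\loc}_{[(\ell+1)/2]}$—but since this compatibility is forced already at the graded level, no actual obstruction arises. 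With formal smoothness of $f$ established and $\bM^{\loc}$ locally of finite type over $\Spec O_E$, smoothness of $f$ follows; composing with the smooth morphism $\prod_i\bM^{\loc}_i\to\Spec O_E$ then gives the smoothness of $\bM^{\loc}$ over $\Spec O_E$.
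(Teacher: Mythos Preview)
Your approach via the infinitesimal lifting criterion is different from the paper's and, once the details are filled in, should work. The paper instead argues by induction on the length $\ell$ of the filtration: it introduces an intermediate functor $\bM^{\flat,\loc}$ (recording only the first $\ell-1$ steps in the EL case, or the quotients $\CF^i/\CF^1$ in the PEL case) and studies $\pi:\bM^{\loc}\to\bM^{\flat,\loc}\times\bM^{\loc}_{?}$. Via a diagram chase it identifies the fiber of $\pi$ explicitly as an open subscheme of a Hilbert/Quot scheme---concretely, the locus in a product of Grassmannians where a certain subbundle meets a fixed subbundle trivially---which is visibly smooth and quasi-projective. In the PEL case the paper further factors $\pi$ through an auxiliary $\bN^{\loc}$ to separate the Lagrangian condition at the top step from the EL-type step at the bottom.

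The paper's explicit fiber description has a concrete payoff you lose: it immediately yields the relative dimension formulas \eqref{eq:dim EL fil} and \eqref{eq:dim PEL fil}, which are used throughout \S\ref{s:KR} and \S\ref{s:AFL}. Your argument certifies smoothness abstractly but would require separate work to recover these. Your inductive step is also under-justified: you should name the obstruction groups precisely (for the extension, $\mathrm{Ext}^1_{\CO_B\otimes\CO_S}(\gr^{i+1}\CF_S,\CF^i_S\otimes\sI)$; for the compatible lift of $t_{i+1}$, a $\Hom$-lifting problem relative to the already-lifted $t_i$ and $\gr^{i+1}t$) and say why the unramifiedness hypothesis---which makes $O_B=O_F$ \'etale over $\BZ_p$, hence locally free $\CO_B\otimes\CO_S$-modules projective over affine $S$---forces both to vanish. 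Your PEL sketch is thin in the same place the paper works hardest: the assertion that ``compatibility is forced already at the graded level'' does not by itself explain why the Lagrangian glueing at the middle step lifts, whereas the paper's two-step factorization $\bM^{\loc}\xrightarrow{\pi_1}\bN^{\loc}\xrightarrow{\pi_2}\bM^{\flat,\loc}\times\bM^{\loc}_1$ isolates this as the choice of a Lagrangian subbundle in an explicit symplectic bundle $\CE$ with an open transversality condition.
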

\begin{proof}
Similar to  $\bM^{\loc}$ we define a functor  $\bM^{\flat,\loc}$, only recording the first $\ell-1$ steps in $\CF^\bullet$ in the EL case, and the quotients $\CF^i/\CF^1, i=1,2,\cdots, \ell-1$  in the PEL case.
Then we have 
a natural morphism 
 \begin{align}\label{eq:pi}
\xymatrix{\pi: \bM^{\loc}\ar[r]&\bM^{\flat,\loc}\times \bM^{\loc}_{?}}
\end{align}
where $?=\ell$ in the EL case, and $?=1$ in the PEL case.

In the EL case, by induction on $\ell$, it suffices  to show that $\pi$ is representable, quasi-projective and smooth.  Consider the commutative diagram 
 \begin{align}\label{eq:ker}
\xymatrix{ 0\ar[r]&\Lambda^{\ell-1}\otimes_{\BZ_p} \CO_S \ar[d]^{t_{\ell-1}} \ar[r]&\Lambda\otimes_{\BZ_p} \CO_S \ar[d]^{t_\ell}\ar[r]&( \Lambda/\Lambda^{\ell-1})\otimes_{\BZ_p} \CO_S  \ar[d]^{t}\ar[r]&0\\
 0\ar[r]&\CF^{\ell-1} \ar[r]&\CF\ar[r]&  \CF/\CF^{\ell-1} \ar[r]&0}
\end{align}
By the snake lemma we obtain a short exact sequence of locally free $O_B\otimes_{\BZ_p}\CO_S$-modules:
 \begin{align*}
\xymatrix{ 0\ar[r]&\ker(t_{\ell-1})\ar[r]&\ker(t_\ell) \ar[r]&\ker(t)\ar[r]&0}.
\end{align*}
Moreover, we have an induced diagram of locally free $O_B\otimes_{\BZ_p}\CO_S$-modules
 \begin{align}\label{eq:def E}
\xymatrix{ 0\ar[r]&\frac{\Lambda^{\ell-1}\otimes_{\BZ_p}\CO_S } {\ker(t_{\ell-1})}\ar[d]^{=} \ar[r]&\CE\ar[r]\ar[d]&\ker(t)\ar[d] \ar[r]&0\\ 
0\ar[r]&\frac{\Lambda^{\ell-1}\otimes_{\BZ_p}\CO_S } {\ker(t_{\ell-1})} \ar[r]&\frac{ \Lambda\otimes_{\BZ_p} \CO_S }{\ker(t_{\ell-1})}\ar[r]&( \Lambda/\Lambda^{\ell-1})\otimes_{\BZ_p} \CO_S  \ar[r]&0}
\end{align}
where the top row is the pull-back of the bottom one via the right vertical map. 
Now, for any given $t_{\ell-1}$ and $t$,  the datum of $t_\ell$ is equivalent to the datum of a $O_B\otimes_{\BZ_p}\CO_S$-subbundle $\ker(t_\ell)/\ker(t_{\ell-1})$ of $\CE$, required to have trivial intersection with $\frac{\Lambda^{\ell-1}\otimes_{\BZ_p}\CO_S } {\ker(t_{\ell-1})}$. Note that the quotient bundle of $\CE$ by $\ker(t_\ell)/\ker(t_{\ell-1})$ is isomorphic to $\CF^{\ell-1}$, hence the characteristic polynomial for the action of $a\in O_B$ is given by ${\rm char}(a; (\Fil^{\ell-1}V)_0)$.

Locally (on  $\bM^{\flat,\loc}\times \bM^{\loc}_{\ell}$, considering the universal bundles) we may trivialize both $\CE$ and its subbundle $\CE_0:=\frac{\Lambda^{\ell-1}\otimes_{\BZ_p}\CO_S } {\ker(t_{\ell-1})}$. Then the fiber of $\pi$ is isomorphic to (an open subscheme of) the Hilbert/Quot scheme, denoted by ${\rm Hilb}_{\ker(t),(\Fil^{\ell-1}V)_0}$ (to indicate the action of $O_B$ on the kernel and the cokernel of the map $\lambda$ below),  parameterizing surjective homomorphisms of  $\CO_B\otimes_{\BZ_p}\CO_S$-modules
\begin{align}\label{eq:Quot}
\lambda: \CE\to \CL,
\end{align}
such that ${\rm char}(a;\CL)={\rm char}(a; (\Fil^{\ell-1}V)_0),\forall a\in O_B$, with the additional condition that 
$$
(\heart)\quad\lambda|_{\CE_0} \text{ is an isomorphism.}
$$  Without the condition $(\heart)$, the Hilbert scheme is a product of Grassmannians, hence projective and smooth over $\Spec O_E$. The condition $(\heart)$ is an open condition. Therefore the map $\pi$ is smooth and quasi-projective. Moreover, $\pi$ is projective precisely when $\rank\CF^{\ell-1}=0$ or $\rank\ker(t)=0$ (e.g., the two factors in the target of $\pi$ are in the definite case, see Example \ref{ex:def case}).

We now turn  to the PEL case. Note that $\bM^{\flat,\loc}$ is the local model for the filtered RZ datum induced by $\Fil^{\ell-1}/\Fil^1V$, hence also in the PEL case. By induction it suffices to show that the map \eqref{eq:pi} $\pi: \bM^{\loc}\to\bM^{\flat,\loc}\times \bM^{\loc}_{1}$ is smooth.  
Consider the analogous functor $\bN^{\loc}$, recording the first $\ell-1$ steps in $\CF^\bullet$. Then $\pi$ factors as
 \begin{align}\label{eq:pi PEL}
\xymatrix{ \bM^{\loc}\ar[r]^-{\pi_1}&\bN^{\loc}\ar[r]^-{\pi_2}& \bM^{\flat,\loc}\times \bM^{\loc}_{1}}
\end{align}
and it suffices to show that both $\pi_1$ and $\pi_2$ are  representable, quasi-projective and smooth.
The case for $\pi_2$ is similar to the EL case and its fiber is locally a Hilbert scheme ${\rm Hilb}_{(\Fil^{\ell-1}/\Fil^1V)_1,(\Fil^1V)_0}$, where $(\Fil^{\ell-1}/\Fil^1V)_1$ stipulates the action of $O_B$ on $\ker(t)$ for $t$ in the diagram:
 \begin{align*}
\xymatrix{ 0\ar[r]&\Lambda^{1}\otimes_{\BZ_p} \CO_S \ar[d]^{t_{1}} \ar[r]&\Lambda^{\ell-1}\otimes_{\BZ_p} \CO_S \ar[d]^{t_{\ell-1}}\ar[r]&( \Lambda^{\ell-1}/\Lambda^{1})\otimes_{\BZ_p} \CO_S  \ar[d]^{t}\ar[r]&0\\
 0\ar[r]&\CF^{1} \ar[r]&\CF^{\ell-1}\ar[r]&  \CF^{\ell-1}/\CF^1 \ar[r]&0.}
\end{align*}
For the morphism $\pi_1$, we denote by $ ( \ker(t_{i}) )^\perp$  the kernel of the composite map
 \begin{align}\label{eq:pi PEL}
\xymatrix{  \Lambda\otimes_{\BZ_p} \CO_S \ar[r]^\sim&(\Lambda \otimes_{\BZ_p} \CO_S)^*\ar[r] & \ker(t_{i})^\ast .}
\end{align}
Note that $( \ker(t_{\ell}) )^\perp= \ker(t_{\ell}) $ by the condition in the definition of local model. 
By \eqref{eq:ker}, $\ker(t_{\ell-1})$ is contained in $\ker(t_\ell)$ and hence  we have 
$$\ker(t_{\ell-1})\subset \ker(t_{\ell})= ( \ker(t_{\ell}) )^\perp\subset ( \ker(t_{\ell-1}) )^\perp.$$
Consider the quotient $\CE:=( \ker(t_{\ell-1}) )^\perp /\ker(t_{\ell-1})$ with the induced perfect alternating form. Then the datum of $t_\ell$ is equivalent to a subbundle  $\CL\subset \CE$, which is a Lagrangian  (i.e., the composite map $\CL\to \CE\to\CE^\ast\to\CL^*$ is zero), such that  $O_B$ acts on the quotient $\CE/\CL$ according to the action of $O_B$ on $ \ker(t_{\ell})/ \ker(t_{\ell-1})\simeq\CF_1^\ast$ and such that $\CL$ intersects trivially with $(\Lambda^{\ell-1}\otimes_{\BZ_p} \CO_S) /\ker(t_{\ell-1})$. Now it is easy to see that $\pi_2$ is  representable, quasi-projective and smooth. 
This completes the proof.
\end{proof}
Let $\hat\bM^{\loc}$ denote the $p$-adic completion of the base change $\bM^{\loc}\times_{\Spec O_{ E}}\Spec O_{ \bre E}$.
The Grothendieck--Messing deformation theory implies that any point of $\CM$ has an \'etale neighborhood which is formally \'etale  over $\hat\bM^{\loc}$  (the argument of \cite[\S3.32]{RZ} applies verbatim). Hence the smoothness claimed in Theorem \ref{th:RZ fil} follows from Lemma \ref{lem:LM fil}.

The proof of Lemma \ref{lem:LM fil} also tells us how to compute the relative dimension of $\bM^\loc$ and hence $\CM$.  In Example \ref{ex:EL fil}  (the EL case), 
the relative dimension of the map \eqref{eq:pi} is given by that of the Hilbert scheme ${\rm Hilb}_{\ker(t),(\Fil^{\ell-1}V)_0}$ (cf. \eqref{eq:dim EL}):
$$
\sum_{\varphi\in \Hom_{\BQ_p}(F,\ov\BQ_p)} \left(s_{\varphi}^{\ell}\sum_{i\leq \ell-1}r_{\varphi}^{i}\right).
$$
It follows that the relative dimension of $\CM$ over $\Spf O_{\bre E}$ is 
\begin{align}\label{eq:dim EL fil}
\sum_{\varphi\in \Hom_{\BQ_p}(F,\ov\BQ_p)} \sum_{j=1}^\ell\left( s_{\varphi}^{j}\sum_{1\leq i\leq j}r_{\varphi}^{i}\right).
\end{align}
In Example \ref{ex:PEL fil}  (the unitary PEL case), similarly we get the relative dimension of the map \eqref{eq:pi} as the sum of that of the map $\pi_1$ and $\pi_2$ in \eqref{eq:pi PEL}. The relative dimension of the map $\pi_2$ is 
$$
\sum_{\varphi\in \Hom_{\BQ_p}(F,\ov\BQ_p)} \left(r_{\varphi}^{1}\sum_{2\leq i\leq \ell-1}s_{\varphi}^{i}\right).
$$
By the total definiteness of $\gr^i, i\leq [(\ell-1)/2]$, the relative dimension of $\pi_1$ is zero.
\begin{example}
We specialize to the unitary PEL case: $F_0=\BQ_p$ and $\dim V=n$. Let $ \Hom_{\BQ_p}(F,\ov\BQ_p)=\{\varphi,\ov\varphi\} $.
Suppose 
\begin{itemize}
\item  
$(r_{\varphi},s_{\varphi})=(r,n-r)$. Then $(r_{\ov\varphi},s_{\ov\varphi})=(n-r,r)$.
\item $\ell=2j+1$ is odd, $\dim_F\gr^iV=1$ for all $i\leq j$. Then $\dim_F \gr^{j+1}V=n-2j$ 
\item $(r^i_{\varphi},s^i_{\varphi})=(0,1)$ for all $i\leq j$.  Then $(r^{j+1}_{\varphi},s^{j+1}_{\varphi})=(r,n-r-2j)$. (Implicitly $n\geq r+2j$.)
\end{itemize}
Note that $r^i_{\ov\varphi}=s^i_{\varphi}$. By the formula above we find the relative dimension of $\pi$ is equal to $r$ and hence the relative dimension of $\bM^\loc$ over $\Spec O_E$ is  
\begin{align}\label{eq:dim PEL fil}
r j +r(n-r-2j)=r(n-r)-rj.
\end{align}
As we will see in \S\ref{s:KR}, this is the same as the expected dimension of KR cycle attached to a totally isotropic rank-$j$ sublattice.
In fact the filtered RZ space in this case is the ``smooth locus" of the corresponding KR cycle, at least when $r=1$, cf. Prop.~\ref{p:fil2KR}.

 \end{example}
\section{Ginzburg--Rallis cycle}
\label{s:GR}
A particularly interesting example in the EL case arises from the Ginzburg--Rallis period \cite{GR}, which is (conjecturally) related to the exterior cube L-function on $\GL_6$. 

Consider the integral RZ datum $\CD_{G,\BZ_p}$ where  $F=\BQ_p, V=F^{\bigoplus 6}$ with a standard basis $\{e_1,\cdots, e_6\}$, $\mu=(0,0,0,1,1,1)$ (i.e., the cocharacter $z\mapsto \diag(1,1,1,z,z,z)$), $b$ basic, $\Lambda=\pair{e_1,\cdots,e_6}$. Henceforth we denote by  $\pair{e_1,\cdots,e_i}$ (resp.  $\pair{e_1,\cdots,e_i}_F$) the lattice (resp. the vector space) spanned by the specified vectors.   To formulate the moduli space, we fix the framing object 
$$
\BX=\BE\times \BE\times \BE,
$$
where $\BE$ is the (unique up-to-isomorphism) supersingular $p$-divisible group over $\BF$ of dimension one and height $2$. In terms of the notation in  Example \ref{ex:EL}, we have $r=s=3$. Then the RZ space $\CM_G$ is formally smooth 
 over $\Spf O_{\bre \BQ_p}$ of relative dimension $3\times 3=9$ by the formula \eqref{eq:dim EL}.
 
Consider the filtration 
$$
0\subset\pair{e_1,e_2}_F\subset \pair{e_1,e_2,e_3,e_4}_F\subset V 
$$
Its stabilizer is the parabolic subgroup $P$ with Levi subgroup isomorphic to $\GL_2^3$. 
Denote the filtered RZ datum by $\CD_{P,\BZ_p}$ and the RZ space by $\CM_{P}$. In terms of  Example \ref{ex:EL fil}, we have  $\ell=3$
and 
$$
r^i=s^i=1, \quad 1\leq  i\leq 3.
$$
Then the RZ space $\CM_P$ is formally smooth 
 over $\Spf O_{\bre \BQ_p}$ of relative dimension,  by the formula \eqref{eq:dim EL fil},
 $$
1\times 1+1\times 2+1\times 3=6.
 $$
 Note that $\CM_P$ comes with two natural morphisms 
 $$\xymatrix{ \CM_P \ar[d] \ar[r] &\CM_G \\
{\bf LT}\times_{\Spf O_{\bre\BQ_p}} {\bf LT}\times_{\Spf O_{\bre\BQ_p}} {\bf LT}&}
$$
where ${\bf LT}$ is the RZ space for $\GL_2,\mu=(0,1)$ and $b$ basis (i.e., each connected component is isomorphic to the Lubin--Tate deformation space in the height two case).

The Ginzburg--Rallis subgroup of $G=\GL_6$ is defined by 
$$
H=P\times_{\GL_2^3,\Delta}\GL_2,
$$
where $\Delta:\GL_2\to\GL_2^3$ is the diagonal embedding.
Then $H\simeq U\rtimes \GL_2$ where $U$ denotes the unipotent radical. We define the (local) Ginzburg--Rallis cycle $\CM_H$  as the fiber product 
 $$	\xymatrix{ \CM_H \ar[d] \ar[r] &\CM_P\ar[d] \\
{\bf LT} \ar[r]^-{\Delta} &\, {\bf LT}\times_{\Spf O_{\bre\BQ_p}} {\bf LT}\times_{\Spf O_{\bre\BQ_p}} {\bf LT}.}
$$
By Lemma \ref{lem:LM fil}, the vertical map is smooth of relative dimension $3$. In particular, we have the total dimension $\dim\CM_H=5 $, exactly half of the total dimension $\dim\CM_G=10$:
 $$	\xymatrix{i_{H,G}: \CM_H \ar[r] &\CM_G.}
$$
\begin{remark}
The drawback is that the morphism $i_{H,G} $ is not proper. One could consider a Laumon style compactification, but it is rather complicated and we hope to investigate its structure in the future. In fact, even the reduced scheme of $\CM_G$ seems not understood. \end{remark}
\begin{remark}There is a relative trace formula approach to the Ginzburg--Rallis period. We hope to formulate the arithmetic fundamental lemma conjecture in that context in the future.  
\end{remark}

\section{Connection to KR cycles}
\label{s:KR}
In this section we fix a $p$-adic field $F_0$ and an unramified quadratic extension $F$. Let $\varpi$ be a uniformizer of $F_0$ and $q$ the residue cardinality of $F_0$.
\subsection{The unitary RZ space}\label{ss:Nn}
Fix an integer $n \geq 1$. We explicate (a slight variant of) the RZ formal moduli scheme $\CN_n = \CN_{n, F/F_0}$ associated to (a special case of) the {\em unramified} integral RZ datum in the unitary PEL case, cf. Example \ref{ex:PEL}.  For simplicity  we will restrict ourselves to the case $F_0=\BQ_p$, though our notation will still be kept for the more general purpose.

For $S\in\Nilp$, we consider triples $(X, \iota, \lambda)$, where
 \begin{enumerate}
 \item[$\bullet$]
 $X$ is a $p$-divisible group of absolute height $2nd$ and dimension $n$ over $S$, where  $d:=[{F_0}: \BQ_p]$, 
 \item[$\bullet$]  $\iota$ is an action of $O_{F}$ such that the induced action of $O_{F_0}$ on $\Lie X$ is via the structure morphism $O_{F_0}\to \CO_S$,  and
 \item[$\bullet$] $\lambda$ is a principal polarization. 
 \end{enumerate}
  Hence $(X, \iota|_{O_{F_0}})$ is a formal $O_{F_0}$-module of relative height $2n$ and dimension $n$. We require that the Rosati involution $\Ros_\lambda$   on $O_{F}$ is the non-trivial Galois automorphism in $\Gal({F}/{F_0})$, and that the \emph{Kottwitz condition} of signature $(n-1,1)$ is satisfied, i.e.
\begin{equation}\label{kottwitzcond}
   \charac \bigl(a; \Lie X\bigr)=(T-a)^{n-1}(T-\ov a) \in \CO_S[T]
	\quad\text{for all}\quad
	a\in O_{F} . 
\end{equation} 
An isomorphism $(X, \iota, \lambda) \isoarrow (X', \iota', \lambda')$ between two such triples is an $O_{F}$-linear isomorphism $\varphi\colon X\isoarrow X'$ such that $\varphi^*(\lambda')=\lambda$.

Over the residue field $\BF$ of $O_{\breve {F}}$, we fix a {\em framing object}, i.e., a triple $(\BX, \iota_{\BX}, \lambda_{\BX})$ such that $\BX$ is supersingular, unique up to $O_F$-linear quasi-isogeny compatible with the polarization.  Then $\CN_n$ (pro-)represents the functor
$$
\xymatrix{\CN_n: \Nilp\ar[r]& Sets},
$$
which associates to each $S\in \Nilp$ the set of isomorphism classes of quadruples $(X, \iota, \lambda, \rho)$ over $S$, where the final entry is an $O_F$-linear quasi-isogeny of {\em height zero}\footnote{In \S\ref{ss:RZ red} on RZ space of PEL type, there was no restriction on the height of $\rho$. Hence here the space $\CN_n$ is a connected component of what's defined earlier.} 
\[
   \rho \colon X\times_S\ov S \to \BX \times_{\Spec \ov k} \ov S,
\]
such that $\rho^*((\lambda_{\BX})_{\ov S}) = \lambda_{\ov S}$.

In terms of Example \ref{ex:PEL}, we are in the case $B=F,\dim_FV=n$ and 
$$
(r_{\varphi},s_{\varphi})=(n-1,1), \quad(r_{\ov\varphi},s_{\ov\varphi})=(1,n-1),
$$ where $\varphi: F\incl \bre F$  is the fixed embedding. The formal scheme $\CN_n$ is connected, formally smooth over $\Spf O_{\breve {F}}$ of relative dimension $n-1$. 
When $n=1$,  the framing object is the supersingular formal $O_{F_0}$-module $(\ov\BE,\iota_{\ov\BE},\lambda_{\ov\BE})$ of (relative) height two and dimension one with signature $(0,1)$, and $\CN_1\simeq\Spf O_{\breve {F}} $ with the universal object being the canonical lifting $(\ov\sE, \iota_{\ov\sE},\lambda_{\ov\sE},\rho_{\ov\sE})$. Here the bar in the notation indicates the action of $O_F$ is through the Galois conjugate of the usual one.  We also consider the signature $(1,0)$ case with the framing object denoted by $(\BE,\iota_{\BE},\lambda_{\BE})$
and the universal object  $(\sE, \iota_{\sE},\lambda_{\sE},\rho_{\sE})$.

We make an explicit choice of the framing object $\BX=\BE^{n-1}\times\ov\BE$ with the obvious choice of $\iota_{\BX},\lambda_{\BX}$.
Let 
\begin{align}\label{eq:Vn}
\BV_n:=\Hom^\circ_{O_F}(\BE, \BX)
\end{align}
be the $F/F_0$-Hermitian space of {\em special quasi-homomorphisms}, where the Hermitian form is induced by the principal polarizations on $\BX$ and $\BE$. Note that $\BV$ is a non-split Hermitian space, i.e., it does not contain a self-dual lattice. 

There exists an isomorphism
\begin{equation}\label{Aut cong U}
 \xymatrix{  \Aut^\circ (\BX_n,\iota_{\BX_n},\lambda_{\BX_n}) \ar[r]^-{\sim}& \U\bigl(\BV_n\bigr)(F_0)}, 
\end{equation}
where the left-hand side is the group of quasi-isogenies of the framing object.
More concretely
$$
\U\bigl(\BV_n\bigr)(F_0)=\left\{g\in \End_F(\BV_n)\mid gg^\ast=\id\right\}.
$$
Here we denote $g^\ast:=\Ros_{\lambda_{\BX_n}}(g)$, the Rosati involution. 
Then the group $\U\bigl(\BV_n\bigr)(F_0)$ acts naturally on $\CN_n$ by changing the framing: 
$$g\cdot (X,\iota,\lambda,\rho) = (X,\iota,\lambda, g \circ \rho).
$$  

\begin{remark}\label{rem:t def}
We may also consider the totally definite case (cf. Example \ref{ex:def case}), i.e., the signature $(n,0)$ case. We may fix the framing object as $\BX'_n=\BE^n$ with the obvious choice of $\iota_{\BX'_n},\lambda_{\BX'_n}$. Then the resulting moduli space, denoted by $\CN'_n$,
is formally smooth of relative dimension zero.  Similarly define $\BV_n'=\Hom^\circ_{O_F}(\BE, \BX'_n)$, a split Hermitian space (hence isomorphic to $V$ in the RZ datum). Let $G=\U(\BV_n')$ and let $K$ the stabilizer of the lattice $\Hom_{O_F}(\BE, \BX'_n) $, which is a hyperspecial subgroup of $G$.  Then there is a $G(\BQ_p)$-equivariant isomorphism 
$$
 \xymatrix{ \CN'_n\ar[r]^-\sim &  \bigsqcup_{G(\BQ_p)/K}\Spf O_{\bre F}}.
$$
Here $G(\BQ_p)/K$ is naturally bijective to the set of self-dual lattices in  $\BV_n'$.

\end{remark}
\subsection{KR cycles}
\label{sec:kudla-rapop-cycl}

For any subset $L\subseteq \mathbb{V}_n$, we recall from \cite[Def.~3.2]{KR1} that the \emph{Kudla--Rapoport cycle} (or \emph{special cycle}) $\mathcal{Z}(L)$ is the closed formal subscheme of $\mathcal{N}_n$, which represents the functor sending each $S\in\Nilp$ to the set of isomorphism classes of tuples $(X, \iota, \lambda,\rho)\in\CN_n(S)$ such that for any $x\in L$, the quasi-homomorphism $$\rho^{-1}\circ x\circ \rho_{ \mathcal{E}}: \mathcal{E}_S\times_S\bar S\xrightarrow{\rho_{\mathcal{E}}} \BE\times_{\Spec\bar k}\bar S\xrightarrow{x}\mathbb{X}\times_{\Spec\bar k}\bar S\xrightarrow{\rho^{-1}} X\times_S \bar S$$  extends to a homomorphism $\mathcal{E}_S\rightarrow X$. Note that $\mathcal{Z}(L)$ only depends on the $O_F$-linear span of $L$ in $\mathbb{V}$. 

Note that when $L=\{x\}$ consists of a single non-zero element $x$, we also denote the special cycle by $\CZ(x)$. By \cite{KR1}, $\CZ(x)$ is a Cartier divisor on $\CN_n$, flat over $\Spf O_{\bre F}$.

\subsection{Filtered RZ space}\label{ss:fil U}
Let $m,r\in\BZ_{\geq 0}$ and $n=m+1+2r$.
We consider the filtered RZ datum in Example \ref{ex:PEL fil}, with the self-dual filtration 
$$
0=  \Fil^0V\subset   \Fil^1V\subset \cdots \subset \Fil^\ell V=V
$$
of length $\ell=2r+1$. We further assume that  
\begin{itemize}\item The graded quotients have dimensions:
$$\dim\gr^iV=\begin{cases} 1, & i\neq r+1,\\
n-2r, & i=r+1.
\end{cases}
$$
\item The signature $(r^i_\varphi,s^i_{\varphi})=(1,0)$ for all $i\leq r$. (Then the signature of $\gr^{r+1}V$ is $(n-1-2r,1)$.)
\end{itemize}
We denote by $P=P_{1^r,m+1,1^r}$ the parabolic subgroup of $G$ stabilizing the filtration.  

We explicate the formal moduli functor. 
 For $S\in\Nilp$, we consider tuples $(X, X_\bullet, \iota, \lambda)$, where $(X, \iota, \lambda)$ is as in \S\ref{ss:Nn}, and $$
0=X_0\subset X_1\subset\cdots\subset X_{2r+1}=X
$$
is a self-dual filtration (cf. \S\ref{ss:RZ fil}) such that the following Kottwitz condition holds: $\forall a\in O_{F} ,$ 
 $$
 \charac \bigl(a;\Lie(X_i)\bigr)=\begin{cases} (T-a)^{i}, &i\leq r, \\
 (T-a)^{n-1-j}(T-\ov a), &i=2r+1-j\geq r .
	\end{cases}
$$
Fix a framing object $(\BX, \BX_{\bullet}, \iota_{\BX}, \lambda_{\BX})$ over $\Spec \BF$. Then we define a functor
$$
\xymatrix{\CN_{P}: \Nilp\ar[r]& Sets},
$$
which associates to $S\in\Nilp$ the set of isomorphism classes $(X, X_\bullet,\iota,\lambda, \rho)$ where $(X,\Fil^\bullet X,\iota,\lambda)$ is as above and $$
\xymatrix{\rho: X\times_S \ov S\ar[r]& \BX\times_{\Spec\BF}\ov S }
$$ is an $O_F$-linear quasi-isogeny of {\em height zero} that preserves the filtration $X_\bullet$ and $ \BX_\bullet$, and such that $\rho^*((\lambda_{\BX})_{\ov S}) = \lambda_{\ov S}$.

Let $\CN_{1^r}$ denote the functor recording only the first $r$-steps $0=X_0\subset X_1\subset \cdots\subset X_r$.  Then we have a natural morphism
\begin{align}\label{eq:N P}
\xymatrix{\CN_{P}\ar[r]& \CN_{1^r}\times \CN_{n-2r}},
\end{align}
where the second factor sends $(X, X_\bullet,\iota,\lambda, \rho)$ to the $p$-divisible group $X_{r+1}/X_r$ with the induced additional structure.
Note that $\CN_{1^r}$ is an example of totally definite RZ space of EL type, and hence has relative dimension zero over $\Spf O_{\bre F}$.
A more explicit description of $\CN_{1^r}$  is as follows. Let $E_i=\Hom^\circ_{O_F}(\BE,\BX_i)\subset \BV$ and $E=E_r$. (A notational remark: henceforth the letter $E$ will no longer denote the reflex field of a local Shimura datum.) Then there is a natural bijection
\begin{align}\label{eq:N 1r}
\xymatrix{\CN_{1^r}\ar[r]^-{\sim}&\coprod_{\FL_{E}}\Spf O_{\bre F}},
\end{align}
where $\FL_{E}$ the set of complete flags $\CE_\bullet$ of lattices in $E=E_r$
$$
0\subset\CE_1\subset \CE_2\subset\cdots \subset \CE_r=\CE
$$
such that $\CE_i=\CE\cap E_i$ is of rank $i$. Upon choosing a basis\footnote{We need less than the basis; it suffices to fix a base point in the set $\{\CE_\bullet\}$. }, the index set is then isomorphic to $B_r(F)/B_r(O_F)$ where $B_r\subset \GL_{r,F}$ is the Borel subgroup of upper triangular matrices. Note that the set $\FL_{E}$ is also isomorphic to $\GL_r(F)/\GL_r(O_F)$ via the map sending the chain $\CE_\bullet$ to  the largest lattice $\CE\subset \BV_r$. We denote by  $\CN_{P,\CE_\bullet}$ the fiber of the map over the copy of $\Spf O_{\bre F}$ indexed by $\CE_\bullet\in\FL_{E}$:
$$\xymatrix{\CN_{P,\CE_\bullet}\ar[r]\ar[d]& \Spf O_{\bre F}\ar[d]^-{\CE_\bullet} \\ \CN_{P}\ar[r]&\CN_{1^r}}
$$

Similar to \eqref{eq:N P} we may further define a natural morphism, recording the graded quotients $X_{i}/X_{i-1}$  for $i=1,\cdots, r+1$ 
\begin{align}\label{eq:N MP}\xymatrix{\CN_{P}\ar[r]&\CN_{M}:= \CN_{1}\times_{ \Spf O_{\bre F}} \cdots \times_{\Spf O_{\bre F}}\CN_1\times_{\Spf O_{\bre F}} \CN_{m+1}}
\end{align}
where $M$ stands for the Levi of $P$, which is isomorphic to $\Res_{E/F}(\GL_1)^r\times G(W^\sharp)$.  Then the case $r=1$ of \eqref{eq:N 1r} asserts  (with the chosen basis)
\begin{align}\label{eq:N 11}
\xymatrix{\CN_{1}\ar[r]^-{\sim}&\coprod_{E^\times/O_E^\times\simeq\BZ}\,\, \Spf O_{\bre F}}.
\end{align}

\subsection{Connection to KR cycles}
Fix a flag of lattices $\CE_\bullet \in \FL_E$ as above, and let $\CE$ be the largest one in the flag, which is of rank $r$. Let $\CZ(\CE)^{\dag}$ the ``(formally) smooth locus" of the morphism $\CZ(\CE)\to \Spf O_{\bre F}$, i.e., 
the maximal open formal subscheme of $\CZ(\CE)$ which is flat over $\Spf O_{\bre F}$ and whose relative tangent spaces over $\Spf O_{\bre F}$ have a constant dimension (in this case $n-r-1$). 
\begin{proposition}\label{p:fil2KR}
There is a natural isomorphism:
$$
\xymatrix{ \CZ(\CE)^{\dag} \ar[r]^-{\sim}& \CN_{P,\CE_\bullet}.}
$$
\end{proposition}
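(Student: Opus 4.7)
The plan is to construct natural morphisms in both directions and verify they are mutually inverse.

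First, I would construct a forgetful morphism $f\colon \CN_{P,\CE_\bullet}\to\CN_n$ by sending $(X,X_\bullet,\iota,\lambda,\rho)\mapsto (X,\iota,\lambda,\rho)$. To see that $f$ factors through $\CZ(\CE)$, note that by our choice of basepoint $\CE_\bullet$, each rank-one step $\CE_i/\CE_{i-1}$ of the flag corresponds via \eqref{eq:Vn} and the framing object $\BX_\bullet$ to a quasi-homomorphism $\BE\to \BX_i\hookrightarrow\BX$. Since $\rho$ is required to preserve the filtrations $X_\bullet$ and $\BX_\bullet$, the composition $\rho\i\circ x\circ\rho_\sE$ extends, for each $x\in\CE_i$, to an honest homomorphism $\sE_S\to X_i\hookrightarrow X$. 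Hence $f$ lands in $\CZ(\CE)$. The formal smoothness of $\CN_{P,\CE_\bullet}$ over $\Spf O_{\bre F}$ established in Theorem~\ref{th:RZ fil} (combined with the dimension formula \eqref{eq:dim PEL fil} which matches the expected dimension $n-r-1$ of the smooth locus) shows that $f$ lands in $\CZ(\CE)^\dag$.

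Second, I would construct the inverse $g\colon\CZ(\CE)^\dag\to\CN_{P,\CE_\bullet}$. Given $(X,\iota,\lambda,\rho)\in\CZ(\CE)^\dag(S)$, the defining property of $\CZ(\CE)$ provides an $O_F$-linear homomorphism $\varphi\colon\sE_S\otimes_{O_F}\CE\to X$, which restricts on each sublattice $\CE_i$ to $\varphi_i\colon\sE_S\otimes_{O_F}\CE_i\to X$. Define $X_i\subset X$ for $i\leq r$ to be the scheme-theoretic image of $\varphi_i$ (equivalently, the fppf-sheaf image), and for $i\geq r+1$ set $X_{2r+1-j}:=(X_j)^\perp$ using the principal polarization $\lambda$, so that self-duality holds by construction. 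One then checks that the signature and height of each $X_i$ match the Kottwitz conditions imposed in \S\ref{ss:fil U}. The framing compatibility of $\rho$ with $\BX_\bullet$ is automatic from the framing compatibility of $\varphi$ with the fixed quasi-homomorphisms $E\to\BV_n$.

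The main obstacle is verifying that $X_i$ is genuinely a $p$-divisible subgroup of $X$, flat over $S$, and that the quotients are again $p$-divisible of the correct height--dimension profile. This is where the restriction to the smooth locus $\CZ(\CE)^\dag$ is essential: on the non-smooth locus of $\CZ(\CE)$ the images of $\varphi_i$ can degenerate (for instance, rank can jump at special fibers of the base). I would handle this via Grothendieck--Messing deformation theory and the local model diagram of Lemma~\ref{lem:LM fil}. Concretely, on the smooth locus the Hodge filtration on $\Lie(X)$ is transverse to the subspaces induced by $\CE_\bullet$ in the precise way that matches the local model $\bM^\loc$ of $\CN_P$; translating this transversality back through Grothendieck--Messing shows that the $X_i$ are flat $p$-divisible subgroups and that $f$ and $g$ are inverse to each other.

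Finally, mutual inverseness follows tautologically: $f\circ g$ simply forgets the filtration that $g$ just reconstructed, while $g\circ f$ recovers the filtration $X_\bullet$ from $\varphi_i$, which by the filtration-preserving framing is exactly the original $X_\bullet$ on each sublattice $\CE_i$, hence on the whole flag by self-duality.
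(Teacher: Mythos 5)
Your overall strategy (a forgetful morphism, an inverse reconstructing the filtration, and the smooth-locus restriction controlling $p$-divisibility) is in the same spirit as the paper's proof, but two of its key steps are not actually justified, and they are precisely where the content of the proposition lies.

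First, the dimension count does not show that $f$ lands in $\CZ(\CE)^\dag$. Formal smoothness of $\CN_{P,\CE_\bullet}$ of relative dimension $n-r-1$ says nothing about the tangent space of $\CZ(\CE)$ at an image point: even if $f$ were a monomorphism (which you have not verified), the induced map on tangent spaces would only give $\dim T_{f(x)}\CZ(\CE)\geq n-r-1$, and that inequality is automatic since $\CZ(\CE)$ is the intersection of $r$ Cartier divisors in the $(n-1)$-dimensional $\CN_n$. The nontrivial direction is $\leq$, and dimension counting alone cannot give it. Second, your construction of the inverse $g$ rests on the claim that the images $X_i$ of $\varphi_i$ are flat $p$-divisible subgroups over the smooth locus, but ``handle this via Grothendieck--Messing and the local model diagram'' is a placeholder rather than an argument; it is precisely this point that the restriction to $\CZ(\CE)^\dag$ is supposed to buy you, so it cannot be deferred.

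The missing ingredient, which is what the paper's proof supplies, is a linear-algebra equivalence at geometric points that closes both gaps at once. At a geometric point $x\in\CZ(\CE)(\BF)$ with special lattice $A=\BD(X)_1\subset\BV_{\bre F}$ (so $\CE\subset A^\vee$), the Serre-tensor map $\sE\otimes_{O_F}\CE\to X$ is a monomorphism if and only if the $\BF$-linear map $\CE\otimes_{O_F}\BF\to A\otimes_{O_{\bre F}}\BF$ is injective. On the other hand, by Grothendieck--Messing the tangent space of $\CZ(\CE)\otimes\BF$ at $x$ is the cokernel of $\CE\otimes_{O_F}\BF\to A^\vee/\varpi A$, and since $A^\vee/\varpi A$ is a codimension-one subspace of $A\otimes\BF$ this is the \emph{same} map with the codomain restricted to a subspace containing the image; so the two injectivity conditions coincide. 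This simultaneously identifies $\CZ(\CE)^\dag$ with the open locus where the Serre-tensor map is a monomorphism (so that the $X_i=\sE_S\otimes_{O_F}\CE_i$ are automatically $p$-divisible subgroups, no flatness argument needed) and shows that $f$ lands in $\CZ(\CE)^\dag$. Without this reduction to a single injectivity statement over $\BF$, both halves of your argument remain open.
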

\begin{proof}Let 
$(X, X_\bullet, \iota, \lambda)\in  \CN_{P,\CE_\bullet}(S)$. It is clear that   It suffices to show that the condition on the tangent space is equivalent to the condition that the homomorphism $\sE\otimes_{O_F}\CE\to X$ is a monomorphism, where $\sE\otimes_{O_F}\CE$ is Serre's tensor construction. 
Note that the condition for $\sE\otimes_{O_F}\CE\to X$  being a monomorphism can be checked on geometric points of $S$. Therefore it suffices to consider the case where $S=\Spec\BF$ is a geometric point $x$ on $\CN_n^\red$. Here $\BF$ is an arbitrary algebraically closed field in characteristic $p$.  

We recall some preliminary constructions (see \cite{VW}, \cite[\S2]{KR1}, \cite[\S3]{RTZ}, \cite[\S2]{LZ17} for more details). Let $\BD(X)$ be the (covariant) Dieudonn\'e module of $X$ with the induced action by 
$$
O_F\otimes_{O_{F_0}} O_{\bre F}\simeq  \prod_{\Hom_{O_{F_0}}(O_F, O_{\bre F})}O_{\bre F}.
$$ Let $A=\BD(X)_1$ be the eigenspace corresponding to the Galois conjugate of the tautological embedding $O_F\subset O_{\bre F}$. Via the framing $\rho$ we may identify the $\bre F$-vector space $A_{\bre F}:=A\otimes_{O_{\bre F}} \bre F$ with $\BV_{\bre F}:=\BV\otimes_{F}\bre F$,  respecting the $\sigma$-sesquilinear pairing on $A_{\bre F}$ induced by the polarization and the extension of the Hermitian paring on $\BV$ to $\BV_{\bre F}$. Then the lattice $A$ is a {\em special lattice} in $\BV_{\bre F}$, in the sense that $A^\vee\subset^1 A\subset \varpi^{-1} A^\vee$, where $A^\vee$ is the dual lattice. Then a point $x\in \CN_n^\red$ lies on $\CZ(\CE)$ if and only if the $\CE\subset A^\vee$. 

The map $\sE\otimes_{O_F}\CE\to X$ is a monomorphism if and only if the induced $O_{\bre F}$-linear map $\CE\otimes_{O_F} O_{\bre F}\to \BD(X)$ is saturated (i.e., the cokernel is torsion free). By the action on Lie algebra we may replace $ \BD(X)$ by $A=\BD(X)_1$. Furthermore, the saturation is equivalent to the condition that the $\BF$-linear map
\begin{align}\label{eq:mono}
\xymatrix{\CE\otimes_{O_F} \BF \ar[r]&A \otimes_{O_{\bre F}} \BF}
\end{align}
is injective.

By Grothendieck--Messing theory, the tangent space of $\CZ(\CE)\times_{\Spf O_{\bre F}}\Spec\BF$ at $x$ is isomorphic to the cokernel of the $\BF$-linear map (cf. \cite{LZ17})
\begin{align}\label{eq:tan}
\xymatrix{\CE\otimes_{O_F} \BF \ar[r]&A^\vee/\varpi A}
\end{align}
where the target $A^\vee/\varpi A$ is an $\BF$-subspace in $A/\varpi A=A \otimes_{O_{\bre F}} \BF$ of codimension one. Note that the KR cycle $\CZ(\CE)$ is the intersection of $r=\rank \CE$ Cartier divisors. The smoothness at $x$ is then equivalent to the above $\BF$-linear map \eqref{eq:tan} is injective, which is equivalent to the injectivity of the map \eqref{eq:mono}. This completes the proof.
\end{proof}

We now determine the reduced scheme of $\CZ(\CE)^{\dag}$.
\begin{proposition}\label{p:dag red}
Let $\CE$ be a totally isotropic lattice of rank $r$. Then
we have
$$
\CZ(\CE)^{\dag,\red} = \CZ(\CE)^{\red}\setminus \cup_{\CE\subsetneq \CE'\subset E}  \CZ(\CE')^{\red}
$$
and 
$$
\CZ(\CE)^{\dag} = \CZ(\CE)\setminus \cup_{\CE\subsetneq \CE'\subset E}  \CZ(\CE'),
$$
where the right hand side denotes the open formal subscheme of $\CZ(\CE)$ whose topological space is the open subscheme $\CZ(\CE)^{\red}\setminus \cup_{\CE\subsetneq \CE'\subset E}  \CZ(\CE')^{\red}$ of $\CZ(\CE)^{\dag,\red}$.

\end{proposition}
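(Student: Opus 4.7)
The plan is to check the equivalence pointwise using the tangent-space computation already established in the proof of Proposition~\ref{p:fil2KR}: at each geometric point $x\in\CZ(\CE)^\red$ with associated special lattice $A\subset \BV_{\bre F}$ (so $A^\vee\subset^1 A\subset\varpi^{-1}A^\vee$ and $\CE\subset A^\vee$), the point $x$ lies in $\CZ(\CE)^\dag$ exactly when the $\BF$-linear map $\CE\otimes_{O_F}\BF\to A^\vee/\varpi A$ is injective, equivalently when no primitive $v\in\CE\setminus\varpi\CE$ satisfies $v\in \varpi A$. The task is therefore to match the failure of this injectivity with membership in some $\CZ(\CE')^\red$ for $\CE\subsetneq \CE'\subset E$.

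The crux is a Hermitian calculation: I claim that if $v\in\CE\subset E$ is primitive and $\varpi^{-1}v\in A$, then automatically $\varpi^{-1}v\in A^\vee$. Indeed, $A^\vee$ is the radical of the $\BF$-valued residual form $\bar h$ induced on $A/\varpi A$ by the $\sigma$-Hermitian pairing $h$, and $\bar h$ descends to a \emph{non-degenerate} sesquilinear form on the one-dimensional $\BF$-space $A/A^\vee$. Since $\varpi\in F_0$ is $\sigma$-fixed and $v\in E$ is totally isotropic,
$$
h(\varpi^{-1}v,\varpi^{-1}v)=\varpi^{-2}h(v,v)=0,
$$
so the image of $\varpi^{-1}v$ in $A/A^\vee$ is isotropic for $\bar h$; but a non-degenerate sesquilinear form on a one-dimensional space over a field is anisotropic, so this image vanishes and $\varpi^{-1}v\in A^\vee$.

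With this in hand the set-theoretic equality follows in both directions. If the tangent map has non-trivial kernel, set $w:=\varpi^{-1}v\in (A^\vee\cap E)\setminus\CE$ and $\CE':=\CE+O_F\cdot w$; then $\CE\subsetneq\CE'\subset E$ and $\CE'\subset A^\vee$, so $x\in\CZ(\CE')^\red$. Conversely, if $x\in\CZ(\CE')^\red$ with $\CE\subsetneq\CE'\subset E$, elementary divisors on the nonzero finite-length $O_F$-module $\CE'/\CE$ yield $w\in\CE'\setminus\CE$ with $\varpi w\in\CE$; then $v:=\varpi w$ is primitive in $\CE$ and satisfies $v\in\varpi\CE'\subset\varpi A^\vee\subset\varpi A$, so the tangent map fails to be injective.

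To promote to the formal-scheme statement, I observe that $Y:=\bigcup_{\CE\subsetneq\CE'\subset E}\CZ(\CE')$ is locally a finite union of closed formal subschemes, since only the finitely many minimal enlargements with $\CE'/\CE\simeq O_F/\varpi$ need to be considered, and any larger $\CE''$ satisfies $\CZ(\CE'')\subset \CZ(\CE')$ for some minimal $\CE'$. Hence $\CZ(\CE)\setminus Y$ is an open formal subscheme of $\CZ(\CE)$ with the correct underlying topological space, and by Proposition~\ref{p:fil2KR} it is flat and formally smooth of relative dimension $n-r-1$ over $\Spf O_{\bre F}$; maximality in the definition of $\CZ(\CE)^\dag$ identifies the two open formal subschemes. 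The main obstacle is the Hermitian step forcing $\varpi^{-1}v\in A^\vee$ from $\varpi^{-1}v\in A$: this fails for an arbitrary $O_{\bre F}$-lattice, and relies essentially on the special-lattice condition $[A:A^\vee]=1$, which makes $A/A^\vee$ one-dimensional and the induced form anisotropic.
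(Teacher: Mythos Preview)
Your overall strategy coincides with the paper's: reduce to a pointwise tangent-space criterion and show that failure of injectivity of $\CE\otimes_{O_F}\BF\to A/\varpi A$ is equivalent to $x\in\CZ(\CE')^\red$ for some $\CE\subsetneq\CE'\subset E$. Both directions you give are the same as in the paper, and your treatment of the ``easy'' inclusion (via elementary divisors on $\CE'/\CE$) is more explicit than the paper's one-line remark. The passage from the reduced statement to the formal-scheme statement is also fine.

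However, the justification of your ``crux'' step has a gap. You assert that the $\sigma$-sesquilinear pairing $h=\{\cdot,\cdot\}$ induces a form $\bar h$ on $A/\varpi A$ whose radical is $A^\vee/\varpi A$, so that $\bar h$ descends to a nondegenerate form on the one-dimensional space $A/A^\vee$. But the pairing on $\BV_{\bre F}$ is \emph{not} Hermitian-symmetric in the naive sense: one has $(A^\vee)^\vee=\Phi(A)$, where $\Phi=\id_\BV\otimes\sigma$, and in general $\Phi(A)\neq A$ (equality holds only at superspecial points). Concretely, $A^\vee/\varpi A$ is the \emph{left} radical of $\bar h$, but the right radical is $\{y\in A:\{A,y\}\subset O_{\bre F}\}/\varpi A$, and $\{A,A^\vee\}\subset O_{\bre F}$ would force $A\subset(A^\vee)^\vee=\Phi(A)$, hence $A=\Phi(A)$. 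Thus $\bar h$ does not descend biadditively to $A/A^\vee$, and even the self-pairing $x\mapsto\{x,x\}\bmod O_{\bre F}$ fails to be well-defined on $A/A^\vee$ (the cross-term $\{x,y\}$ for $y\in A^\vee$ need not be integral).

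The paper repairs this by using exactly the feature you omit: the vector $u:=\varpi^{-1}v$ lies in $\BV$ (not merely $\BV_{\bre F}$), hence is $\Phi$-fixed. Then $u\in A$ implies $u=\Phi(u)\in\Phi(A)=(A^\vee)^\vee$, so $\{u,A^\vee\}\subset O_{\bre F}$; combined with $\{u,u\}=(u,u)=0$ and $A=O_{\bre F}\cdot u+A^\vee$ (since $\dim_\BF A/A^\vee=1$), one gets $\{u,A\}\subset O_{\bre F}$, i.e.\ $u\in A^\vee$. So your conclusion is correct, but the argument must invoke the rationality of $u$ rather than an (invalid) anisotropy statement for $\bar h$ on $A/A^\vee$.
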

\begin{proof}
It suffices to prove the first assertion on the reduced schemes. We use the smooth criterion via the  injectivity of \eqref{eq:mono}. It is clear that $\CZ(\CE)^{\dag,\red} \subset \CZ(\CE)^{\red}\setminus \cup_{\CE'\supsetneq \CE}  \CZ(\CE')^{\red}.$ Now let $x\in \CZ(\CE)(\BF)$ and $A\subset \BV_{\bre F}$ the corresponding special lattice. Then $\CE\subset A^\vee$. 

It remains to show that, if the map \eqref{eq:mono} is not injective, then $x$ lies on $\CZ(\CE')^{\red}$ for some $\CE\subsetneq \CE'\subset E$. Let $\CE'=A\cap E$. The non-injectivity of \eqref{eq:mono} implies that $\CE\neq \CE'$. Since $E$ is totally isotropic, all vectors in $\CE'$ have integral Hermitian norms. By the claim below, we have $\CE\subset A^\vee$ and hence the point $x\in \CZ(\CE')$. 

{\em Claim: Let $u$ be a vector in $\BV$ such that $(u,u)\in O_F$. If $u\in A$, then $u\in A^\vee$.}

To show the claim, we denote by $\{\cdot,\cdot \}$ the $\sigma$-sesquilinear pairing on $\BV_{\bre F}$ and denote by $\Phi$ the automorphism $\id_{\BV}\otimes \sigma$.
 The dual lattice $A^\vee$ is characterized by $\{A^\vee, A\}=O_{\bre F}$ and we have $(A^\vee)^\vee=\Phi(A)$. It follows that $u=\Phi(u)\in \Phi(A)$ and hence we have $\{u, A^\vee \}\subset O_{\bre F}$.
  
Now suppose that $u\in A\setminus A^\vee$. Since $\dim_{\BF}A/A^\vee=1$, we must have
$$A=\pair{u}_{O_{\bre F}}+ A^\vee.$$
It follows from $(u,u)=\{u,u\}\in O_F$ that $$
\{u, A\}\subset  O_{\bre F}
$$
and hence $u\in A^\vee$. Contradiction!
\end{proof}

\begin{remark}
The assertion fails without the totally isotropic condition on $\CE$. For example, consider a rank one $\CE=\pair{u}$ with a generator $u$ such that its Hermitian norm $(u,u)=\varpi$. The special divisor $\CZ(\CE)=\CZ(u)$ is not formally smooth over $\Spf O_{\bre F}$ while $ \CZ(\CE')$ is empty for any lattice $\CE'\subset E$ such that $\CE'\supsetneq \CE$.
\end{remark}

\begin{example}[The case $\rank\CE=1$]Let $\CE=\pair{u}$ be a rank one lattice generated by a non-zero $u\in\BV$ with Hermitian norm $(u,u)=0$. Then by Prop.~\ref{p:dag red}, 
\begin{align}\label{eq:red r=1}
\CZ(\CE)^{\dag,\red}=\CZ(u)^\red\setminus \CZ(\varpi^{-1} u)^\red,
\end{align}
and 
\begin{align}
\CZ(\CE)^\dag=\CZ(u)\setminus \CZ(\varpi^{-1} u),
\end{align}
the restriction of $\CZ(u)$ to the open subscheme $\CZ(u)^\red\setminus \CZ(\varpi^{-1} u)^\red$ of $\CZ(u)^\red$. Note that $\CZ(\varpi ^{-1}u)$ is a Cartier divisor and is a closed formal subscheme of $\CZ(u)$ (though abstractly they are isomorphic!). The difference divisor $\CD(u)$ is defined as the difference of Cartier divisors 
\begin{align}
\CD(u):=\CZ(u)-\CZ(\varpi ^{-1} u),
\end{align} 
i.e., $\CD(u)$ is locally defined by $f_u/f_{\varpi ^{-1}u}=0$, where $f_u$ and $f_{\varpi ^{-1} u}$ are respectively the local equations defining $\CZ(u)$ and $\CZ(\varpi^{-1} u)$. Then $\CZ(\CE)^\dag$  is an open formal subscheme of $\CD(u)$ and hence we may view $\CD(u)$  as a compactification of $\CZ(\CE)^\dag$. It is analogous to the  Laumon style compactification of ${\rm Bun}_P\to {\rm Bun}_{\GL_n}$ for a parabolic $P$ of  $\GL_n$.

\end{example}
\subsection{The Bruhat--Tits stratification of $\CZ(\CE)^{\dag,\red}$}
We first recall the Bruhat--Tits stratification of the reduced scheme $\CN_n^\red$ of $\CN_n$, following the theorem of Vollaard--Wedhorn \cite{VW}\footnote{Our convention on vertex lattices slightly differ: the Hermtian form have integral values on our lattices $\Lambda$, which correspond to the dual lattices of those  while in {\em loc. cit.}.}, see also \cite[\S2.7]{LZ1}.

Let $\Ver(\mathbb{V})$ denote the set of  all vertex lattices $\Lambda\subseteq \mathbb{V}$, i.e., lattices (of full rank) such that $\Lambda\subset\Lambda^\vee\subset\varpi^{-1}\Lambda$. The reduced subscheme of $\mathcal{N}_n$ is a union of closed subschemes $\mathcal{V}(\Lambda)$
$$\mathcal{N}_n^\mathrm{red}=\bigcup_{\Lambda\in \Ver(\BV)} \mathcal{V}(\Lambda).
$$ Here $\CV(\Lambda)$ is the generalized Deligne--Lusztig variety \cite[\S2.5]{LZ1}, a smooth projective variety over $\BF$ of dimension $\frac{t(\Lambda)-1}{2}$. Here 
$$t(\Lambda):=\dim \Lambda^\vee/\Lambda$$ is called the type of $\Lambda$, which is an odd integer.
For two vertex lattices $\Lambda, \Lambda'$, we have $\mathcal{V}(\Lambda)\subseteq \mathcal{V}(\Lambda')$ if and only if $\Lambda\supseteq \Lambda'$; and $\mathcal{V}(\Lambda)\cap \mathcal{V}(\Lambda')$ is nonempty if and only if $\Lambda+\Lambda'$ is also a vertex lattice, in which case it is equal to $\mathcal{V}(\Lambda+\Lambda')$. We therefore have a \emph{Bruhat--Tits stratification} by locally closed subvarieties, 
$$
\mathcal{N}_n^\mathrm{red}=\bigsqcup_{\Lambda\in \Ver(\BV)}\mathcal{V}(\Lambda)^\circ, \quad \mathcal{V}(\Lambda)^\circ\coloneqq \mathcal{V}(\Lambda)\setminus\bigcup_{\Lambda\subsetneq \Lambda'}\mathcal{V}(\Lambda').
$$ The irreducible components of $\mathcal{N}_n^\mathrm{red}$ are exactly the projective varieties $\mathcal{V}(\Lambda)$, where $\Lambda$ runs over all vertex lattices of maximal type. The ``dual graph" of the stratification is the Bruhat--Tits building of the unitary group $\SU(\BV)$.

For $\CE\subseteq \mathbb{V}$ be an $O_F$-lattice of rank $r\ge1$. We define a subset of $ \Ver(\BV)$:
$$
 \Ver_\CE:=\{\Lambda\in \Ver(\BV)\mid\CE\subset \Lambda\}.
$$
By \cite[Prop.~4.1]{KR1}, the reduced subscheme $\mathcal{Z}(\CE)^\mathrm{red}$ of the KR cycle $\mathcal{Z}(\CE)$ is a union of Bruhat--Tits strata,
\begin{equation}
  \label{eq:KRstrat}
\mathcal{Z}(\CE)^\mathrm{red}=\bigcup_{\Lambda\in \Ver_\CE}\mathcal{V}(\Lambda)=\bigsqcup_{\Lambda\in \Ver_\CE }\CV(\Lambda)^\circ.  
\end{equation}
In fact, by \cite{LZ17}, we have $\mathcal{Z}(\Lambda)=\mathcal{V}(\Lambda)$ as closed formal subschemes of $\CN_n$.

\begin{remark}
In \cite{Va} Vandenbergen proved that $\CZ(\CE)^\red$ is connected when $\CE$ is {\em non-degenerate} (i.e. $E=\CE\otimes_{O_F}F$ is a non-degenerate Hermitian space). It is likely the method also proves the connectedness for any $\CE$, particularly totally isotropic $\CE$ for our interest in this paper.   
\end{remark}

Now we consider those $\Lambda\in\Ver_\CE$ such that $\CE$ is saturated in $\Lambda$: 
 $$
 \Ver^\dag_\CE:=\{\Lambda\in \Ver_\CE\mid\CE=E\cap \Lambda\}.
$$
For $\Lambda\in\Ver_\CE$, we define 
$$
\CV(\Lambda)^\dag=\CV(\Lambda)\setminus\bigcup_{\Lambda\subsetneq \Lambda',\Lambda'\notin  \Ver^\dag_\CE}\mathcal{V}(\Lambda').
$$
It is easy to see that 
$$
\CV(\Lambda)^\dag=\bigsqcup_{\Lambda\subsetneq \Lambda' \in    \Ver^\dag_\CE}\CV(\Lambda')^\circ.
$$
 Then by Prop.~\ref{p:dag red} and \eqref{eq:KRstrat},  we have a Bruhat--Tits stratification of $\CZ(\CE)^{\dag,\red}$.
\begin{align}\label{eq:red r=1}
\CZ(\CE)^{\dag,\red}=\bigcup_{\Lambda\in \Ver^\dag_\CE}\mathcal{V}(\Lambda)^\dag=\bigsqcup_{\Lambda\in \Ver^\dag_\CE }\CV(\Lambda)^\circ.
\end{align}

 Recall from \eqref{eq:j} that we have a morphism $\CZ(\CE)^\dag\to \CN_{n-2r}=\CN_{m+1}$.
Next we want to describe the induced morphism on their induced schemes. Denote $\BV^\flat=E^\perp/E$ with the induced Hermitian form. There is a map  
\begin{align}\label{eq:vert}
\xymatrix{\Ver_\CE^{\dag}\ar[r]& \Ver(\BV^\flat)} 
\end{align}
sending $\Lambda$ to $\Lambda^\flat=(E^\perp \cap \Lambda)/E\cap \Lambda=(E^\perp \cap \Lambda)/\CE$. By Lemma  \ref{lem: lat}, $\Lambda^\flat$ is a vertex lattice and the type does not increase: 
$$t(\Lambda^\flat)\leq t(\Lambda).
$$

Fix  a $\Lambda\in \Ver_\CE^{\dag}$ and its image $\Lambda^\flat\in  \Ver(\BV^\flat)$. Then we have a morphism of varieties over $\BF$
\begin{align}\label{eq:pi V}
\xymatrix{\pi_{\Lambda,\Lambda^\flat}:\CV(\Lambda)^{\dag}\ar[r]& \CV(\Lambda^\flat)}. 
\end{align}
One can describe it in terms of the special lattice $A\in \BV_{\bre F}$ but it is not clear what can be said about the map.
\begin{remark}
The reduced scheme $\CN_n^\red$ is a certain affine Deligne--Lusztig variety (in mixed characteristic). It is not clear (to the author) whether the reduced scheme $\CZ(\CE)^{\dag,\red}$ admits a similar description, and whether one can use such description to study the map \eqref{eq:pi V}. 
\end{remark}

To illustrate, we consider the rank one case $\CE=\pair{u}$. Given $\Lambda\in \Ver_\CE$, there are two cases
\begin{itemize}
\item $u\in  \varpi \Lambda^\vee$,
\item $u\notin \varpi \Lambda^\vee$.
\end{itemize}

In the former case, $(u,\Lambda)= \varpi O_F$, we have $t(\Lambda^\flat)=t(\Lambda)-2$. 
Then
$$
\CV(\Lambda)^{\dag}=\CV(\Lambda)\setminus \CV(\Lambda+\pair{u/\varpi}).
$$
The fiber of the map \eqref{eq:pi V} seems rather complicated.

In the latter case, $(u,\Lambda)=O_F$, we have $t(\Lambda^\flat)=t(\Lambda)$. The map \eqref{eq:pi V}  is an isomorphism. 

Note that, if $\Lambda$ has maximal type $n$ or $n-1$, then the latter case cannot happen.
\begin{remark}
The map $\pi_{\Lambda,\Lambda^\flat}$ behaves well with change of vertex lattices $\Lambda\subset \Lambda'$ in $\Ver_\CE$:  there is a natural commutative diagram
$$
\xymatrix{ \CV(\Lambda')^{\dag} \ar[d]\ar[r]& \CV(\Lambda'^\flat)\ar[d]\\
 \CV(\Lambda)^{\dag} \ar[r]& \CV(\Lambda^\flat)}
$$
where the two vertical maps are closed immersions.
\end{remark}

\subsection{Connected components: the example in the case $n=3$}
\label{ss:pi 0}

\begin{example}[Disconnectedness of $\CZ(u)^\dag\subset \CN_3$] Consider $n=3$ and a rank one lattice $\CE=\pair{u}$ spanned by a norm zero vector $u$. Then 
$$\Ver_{\pair{u}}^\dag=\bigl\{ \Lambda \in \Ver(\BV) \mid u\in \Lambda\setminus \varpi  \Lambda \bigr\}.$$
Let $\Ver_{\pair{u}}^{\dag, t=3}=\{\Lambda\in \Ver_{\pair{u}}^{\dag, t=3}\mid t(\Lambda)=3\}$. Fixing a base point $\Lambda_0$ in this set,  there is a natural bijection  
$$
\xymatrix{\Ver_{\pair{u}}^{\dag, t=3}\ar[r]^-{\sim} &N(F_0)/N(O_{F_0})}
$$
where $N$ is the unipotent radical of the parabolic subgroup corresponding to the flag $0\subset \pair{u}_F\subset \pair{u}_F^\perp\subset \BV$, and $N(O_{F_0})$ is defined with respect to $\Lambda_0$.

{\em Claim: $\CZ(u)^{\dag,\red}$ is disconnected of pure dimension one}.

It follows that
\begin{align}\label{eq:pi 0 n=3}
\CZ(u)^{\dag,\red}=\coprod_{ \Lambda\in \Ver_{\pair{u}}^{\dag, t=3}} \CV(\Lambda)^\dag.
\end{align}

To show the claim, first we note that there can not be zero dimensional connected components. In fact, for any type $1$ lattice $\Lambda\in \Ver_{\pair{u}}^\dag$, one can find a  type $3$ lattice $\Lambda'$ such that $u\in\Lambda'\subset \Lambda$: if $u\notin \varpi\Lambda^\vee$, then we can take $\Lambda'=\pair{u}+\varpi\Lambda^\vee$; if $u\in \varpi\Lambda^\vee$  then we can take any one of the $q+1$ lattices $\Lambda'\subset \Lambda$ of type $3$. 

It remains to show that the (punctured) DL curves $\CV(\Lambda)^\dag$ for $\Lambda\in \Ver_{\pair{u}}^{\dag, t=3}$ do not intersect. Suppose that $\Lambda_1,\Lambda_2\in \Ver_{\pair{u}}^{\dag, t=3}$ are adjacent, i.e., $\wt \Lambda=\Lambda_1+\Lambda_2$ is a vertex of type $1$.  We need to show that $\frac{1}{\varpi} u\in\wt \Lambda$, which implies that the intersection point $\CV(\Lambda_1)\cap\CV(\Lambda_2)=\CV(\wt\Lambda)$ is ``removed" in $\CZ(u)^{\dag,\red}$.

By $\Lambda_i^\vee=\varpi^{-1}\Lambda_i$ we have $\frac{1}{\varpi}u\in \Lambda_i^\vee$ and its image is nonzero in the quotient $\Lambda_i^\vee/\Lambda_i=\frac{1}{\varpi}\Lambda_i/\Lambda_i$. By
$$
\xymatrix{ \Lambda_i \ar@{^(->}[r]^{1}&\wt \Lambda \ar@{^(->}[r]^-{1}&\wt \Lambda^\vee \ar@{^(->}[r]^-{1} &\Lambda_i^\vee=\varpi^{-1}\Lambda_i .}
$$
and $\Lambda_1^\vee\cap  \Lambda_2^\vee= \wt\Lambda^\vee$, we see that $\frac{1}{\varpi} u\in \wt\Lambda^\vee$.  However, $(u,u)=0$ hence $\frac{1}{\varpi} u\mod \Lambda_1$ is isotropic in the 2-dimensional $k$-spase  $\wt \Lambda^\vee/\Lambda_1$ inside  $\Lambda_1^\vee/\Lambda_1$. Note that the $1$-dimensional line $ \Lambda^\vee/\Lambda_1$ inside $\wt \Lambda^\vee/\Lambda_1$ is characterized by $(x,x)=0$. Hence $ \frac{1}{\varpi} u\in \wt \Lambda$ as desired.

\end{example}

\begin{remark}Similar to \eqref{eq:pi 0 n=3} there is a description  of the set of connected components  $\CZ(u)^\dag$  when $n=4$. 
Is there a natural parameterization of the set of connected components of $\CZ(u)^\dag$ for an isotropic $u$ (more generally, $\CZ(\CE)^\dag$ for a totally isotropic $\CE$) when $n\geq 5$? 
\end{remark}

\section{The FL conjecture of Liu}\label{s:FL}

In this section we recall the fundamental lemma (FL) conjecture of Liu in the Bessel case \cite{Liu14}. This also sets up the analytic side that will be used to formulate our AFL conjecture.

\subsection{The Bessel subgroup}\label{ss:B sgp}
We recall \cite[\S12]{GGP} for the Bessel subgroups (in the unitary case). Let $F/F_0$ be a quadratic extension of fields.  Let $V$ be an $F/F_0$-Hermitian space of dimension $n \geq 2$, with the sesquilinear pairing denoted by $(\cdot,\cdot)$. Following {\it loc. cit.} we will denote by $G(V)$ the unitary group $\U(V)$. Let $E$ be a totally isotropic subspace of dimension $r$ so that we have a flag  $0\subset E\subset E^\perp\subset V$. (Here $E^\perp$ denotes the orthogonal complement of $E$ in $V$.) Let $W^\sharp= E^\perp/E$ be the quotient with the induced 
Hermitian structure:
\begin{align}\label{eqn:flag1}
\xymatrix{0\ar[r] & E\ar[r]&E^\perp\ar[r]& W^\sharp\ar[r]&0 .}
\end{align}
Fix a non-isotropic line $L\subset W^\sharp$.  Pulling-back the extension \eqref{eqn:flag1} along $L\incl W^\sharp$ we get an extension, denoted by $E^\sharp$,
\begin{align}\label{eqn:flag2}
\xymatrix{0\ar[r] & E\ar[r]&E^\sharp\ar[r]& L\ar[r]&0 .}
\end{align}
Let $W\subset W^\sharp$ be the orthogonal complement of $L$ in $W^\sharp$. Then we have $$
W^\sharp=W\oplus L.
$$

Now fix a  complete flag of (necessarily isotropic) subspaces in $E$:
$$
0=E_0\subset E_1\subset \cdots\subset E_r=E.
$$
It induces a (partial) flag of $V$ 
$$
0=V_0\subset V_1\subset \cdots\subset V_{2r+1}=V.
$$ 
by setting \begin{align}\label{eq:Vi}
V_{i}=\begin{cases}E_{i}, & 1\leq i\leq r,\\
E_{2r+1-i}^\perp, & r+1\leq i\leq 2r+1.
\end{cases}
\end{align}
Then we define a subgroup $H$ of $G(V)$ consisting of $g\in G(V)$ such that 
$$
\begin{cases}
gE_i\subset E_i,  g|_{E_i/E_{i-1}}=1,& i=1,\cdots, r,\\
g E^\sharp\subset E^\sharp, g|_{E^\sharp/E}=1.
\end{cases}
$$
Note that $g\in H$ also stabilizes $E_i^\perp$.
By the isomorphism $E^\perp/E^\sharp\simeq W$, we 
have a quotient map 
\begin{align}\label{eq:H pr}
\xymatrix{\fkq\colon H\ar@{->>}[r]&G(W)}
\end{align}
defined by $g\mapsto g|_{E^\perp/E^\sharp}$. (Note that all $g\in H$ automatically stabilizes $E^\perp$.)
The unipotent radical of $H$, denoted by $N$, is then the kernel of the quotient map $H\to G(W)$.

 \begin{remark}
Let $P=P_{1^r,m+1,1^r}$ be the parabolic subgroup of $G(V)$ which stabilizes a complete flag of (necessarily isotropic) subspaces in $E$. The Levi subgroup $M_P$ of $P$ is naturally isomorphic to $\Res_{F/F_0}(\GL_1)^r\times G(W^\sharp)$.  Then the group $H$ fits into a Cartesian diagram
\begin{align}\label{eq:H def2}
\xymatrix{H\ar[r] \ar[d]& P\ar[d]\\ G(W)\ar[r]& \Res_{F/F_0}(\GL_1)^r\times G(W^\sharp) .}
\end{align}
This provides a quick definition of $H$. However, it is then less obvious how to define the homomorphism \eqref{eq: def u} below.
\end{remark}

 \begin{remark}
In \cite{GGP} the authors start with an embedding $W\subset V$  and an orthogonal decomposition $
V=W\oplus W^\perp$ and $W^\perp=( E+E^\vee)\oplus L$. 
Then we have an isomorphism
\begin{align*}
H\simeq N\rtimes G(W),
\end{align*}
where $N$ is the unipotent radical of $P$ in \eqref{eq:H def2}, and $G(W)$ (now as a subgroup of $G(V)$) acts on $N$ by conjugation.  It seems more natural to treat $W$ as a subquotient rather than a subspace of $V$, as we will see in the moduli space of $p$-divisible group. 
\end{remark}

Let $N_{E^\sharp}$ denote the unipotent radical of the Borel subgroup $B_{E^\sharp}$ of $\Res_{F/F_0}\GL(E^\sharp)$ stabilizing the complete flag $
0=E_0\subset E_1\subset \cdots\subset E_r=E\subset E_{r+1}=E^\sharp.$ 
Then we have  a quotient map 
\begin{align}\label{eq: def u}
\xymatrix{u\colon H\ar@{->>}[r]&N_{E^\sharp}}
\end{align}
sending $g\in H$ to $g|_{E^\sharp}$.
Finally we fix a generic homomorphism (of algebraic groups)
\begin{align}
\xymatrix{\lambda\colon N_{E^\sharp}\ar[r]&F.}
\end{align}
Here by a ``generic" homomorphism we mean that  the stabilizer of the homomorphism under the natural action of the Levi torus $\Res_{F/F_0} (\GL_1)^{r+1}$ of $B_{E^\sharp}$ is as small as possible (in this case, the stabilizer  is exactly the center of $\Res_{F/F_0}\GL(E^\sharp)$). 

We now define an explicit homomorphism $\lambda$ by choosing  a basis $\{e_1,\cdots,e_r\}$ of $E$ such that 
$$
E_i=\pair{e_1,\cdots, e_i}_F, \quad 1\leq i\leq r,
$$
and a basis $\{e\}$ of $L$. We let $e_{r+1}\in E^\perp$ be a lifting of $e$. For $u\in N_{E^\sharp}$, we write $$u(e_{i+1})\equiv e_{i+1}+a_{i+1,i}\, e_{i}\mod E_{i-1},\quad a_{i+1,i}\in F_0, 1\leq i\leq r.$$
Then we define $\lambda$  by 
\begin{align}\label{eq:lambda H}
 \lambda(u)=\sum _{i=1}^{r}a_{i+1,i}.
 \end{align}
 For later use we also fix a ``dual basis" $\{e_1^\vee,\cdots, e_r^\vee\}$ (i.e. $\pair{e_i,e_j^\vee}=\delta_{ij}$) such that $e_{r+1}\perp e_i^\vee$ for all $i=1,\cdots, r$. Denote $E^\vee=\pair{e_1^\vee,\cdots,e_r^\vee}_F$. We then lift $W^\sharp$ to the unique subspace $(E+E^\vee)^\perp$ of $V$ and write $V=W^\sharp\oplus (E+E^\vee)$. In terms of the ``dual basis" we have 
  $$
 \lambda(u)=\sum _{i=1}^{r} (e_{i+1},e_i^\vee) .
 $$

 We view $H$ as a subgroup of $G=G(W)\times G(V)$ where the first factor is the natural projection \eqref{eq:H pr}. To rigidify the set up, in this paper let us simply assume that the discriminant of $L$ is trivial. Then the pair $(H,\lambda\circ u)$, up to $G$-conjugacy, depends only on the isomorphism class of  $V$ (or equivalently, only on  the isomorphism class of $W$), which we assume to contain a totally isotropic subspace of dimension $r$.  
 
We will be interested in the action of $H\times H$ on $G=G(W)\times G(V)$ by
$$
(h_1,h_2)\cdot (h,g):= (h_1hh_2,h_1gh_2).
$$
To simplify the action, we introduce the group
 $$
 \bH:= H\times_{G(W)} H=\{(h_1,h_2)\in H\times H\mid \fkq(h_1)=\fkq(h_2)\}.
 $$
 Then \eqref{eq:H pr} induces a surjective homomorphism  denoted by the same symbol
 \begin{align}\label{eq:qt H}
 \fkq: \bH\to G(W)
 \end{align} whose kernel is $N\times N$.
 It acts on $G(V)$ by 
 $$
(h_1,h_2)\cdot g:= h_1^{-1}gh_2.
$$
See \cite[\S4.2]{Liu14}. Then it is easy to see that the set of $(H\times H)(F_0)$-orbits in $G(F_0)$ is naturally bijective to the set of 
 $\bH(F_0)$-orbits in $G(V)(F_0)$. Henceforth we will freely switch between the two orbit spaces.

\subsection{Symmetric space}\label{s:gpthsetup}

Consider the symmetric space 
\begin{equation}\label{Sn def}
    S_n := \{\,\gamma\in \Res_{F/F_0}\GL_n\mid \gamma \ov \gamma=1_n\,\}.
\end{equation}
The group $\Res_{F/F_0}\GL_{n}$ acts on $S_n$  by
$$g\cdot \gamma = g^{-1} \gamma \ov  g.
$$

As before we assume $n=m+2r+1$.
Similar to the Bessel subgroup of the unitary group, we recall from \cite[\S2.1]{Liu14} the Bessel subgroup $H'$  of $\GL_{n,F}$. Let 
$$\{e_1,\cdots, e_n\}$$
be the standard basis of the n-dimensional vector space $V=F^{n}$. Let $E_i=\pair{e_1,\cdots,e_i}_F$ and $W=\pair{e_{r+2},\cdots, e_{n-r}}_F\subset W^\sharp=\pair{e_{r+1},\cdots, e_{n-r}}_F$. We view $\GL(W)$ as a subgroup $ \GL(W^\sharp)$ in the obvious way.   
Then the Bessel subgroup $H'$  of $\GL_{n,F}$ consists of $g\in \GL_{n,F}$ such that 
\begin{align}\label{eq:Bes GL}
\begin{cases}
gE_i\subset E_i,  g|_{E_i/E_{i-1}}=1,& i\in\{1,\cdots, r, n-r+1,\cdots, n\},\\
g|_{E_{m+1+r}/E_{r}}\in \GL(W),
\end{cases}
\end{align}
where we view $g|_{E_{m+1+r}/E_{r}}$ as an element in $\GL(W^\sharp)$ by the natural isomorphism $E_{m+1+r}/E_{r}\simeq W^\sharp$.
The kernel of the projection $H'\to \GL(W)\simeq \GL_{m,F}$ (the isomorphism defined by the above basis) is the unipotent radical denoted by $U_{1^r,m+1,1^r}$. Then 
$$
H'= U_{1^r,m+1,1^r}\rtimes\GL_{m,F}.
$$
We introduce the subgroup of $ \Res_{F/F_0}H'$
\[
   \bH' :=\Res_{F/F_0} U_{1^r,m+1,1^r}\rtimes\GL_{m,F_0} \subset \Res_{F/F_0}H'.
\]
with the induced action on $S_n$. In other words, $\bH'$ is defined by \eqref{eq:Bes GL} and further requiring that $g|_{E_{m+1+r}/E_{r}}\in \GL_{m,F_0}$. Denote the natural quotient map by
\begin{align}\label{eq:qt H'}
\xymatrix{\fkq'\colon \bH'\ar@{->>}[r]&\GL_{m,F_0} }.
\end{align}

Let $\wt E=\pair{e_1,\cdots, e_{r+1},e_{n-r+1},\cdots,e_n}_F$, naturally identified with $V/W$. Let $U_{1^{2r+1}}$ denote the unipotent radical of the Borel subgroup of $\GL(\wt E)\simeq \GL_{2r+1}$ with respect to the complete flag $0\subset \pair{e_1}_F\subset\cdots \subset \pair{e_1,\cdots, e_{r+1},e_{n-r+1},\cdots,e_n}_F$.
We have a surjective homomorphism
\begin{align}
\xymatrix{u'\colon \bH'\ar@{->>}[r]&U_{1^{2r+1}}},
\end{align}
sending $g\in  \bH'$ to the composition $$\xymatrix{\wt E\ar[r]& V\ar[r]^{g}&  V\ar[r]& V/W \simeq \wt E}.$$
(It is easy to verify that the composition indeed lies in the subgroup $U_{1^{2r+1}}$ of $\GL(\wt E)$.) Similar to \eqref{eq:lambda H}, we define a generic homomorphism,
\begin{align}
\xymatrix{\lambda'\colon U_{1^{2r+1} }\ar[r]&F}
\end{align}using the above basis $\{e_1,\cdots, e_{r+1},e_{n-r+1},\cdots,e_n\}$.

\subsection{Regular orbit matching}\label{ss: orb match}
We now consider the space of (regular) orbits for the two group actions introduced above: 
\begin{altenumerate} 
\item The action of $\bH$ on $G(V)$.
\item The action of $\bH'$ on $S_n$.
\end{altenumerate} 
Recall from \cite[Def.~4.5]{Liu14} that an element  $\gamma \in S_n(F_0)$ (resp. $g\in G(V)(F_0)$) is called {\em pre-regular} if its stabilizer under the action of
$\Res_{F/F_0}U_{1^r,m+1,1^r}$ (resp. $N\times N$) is trivial. By \cite[Lem.~4.6]{Liu14}, a $\Res_{F/F_0}U_{1^r,m+1,1^r}$-orbit of a pre-regular element contains a unique element of the form 
\begin{align}\label{eq:n form S}
\gamma=
\begin{bmatrix}  & &&&&& t'_1\\
& &&&&\udots&\\
& &&&t'_r&& \\
& &&\gamma^\sharp&&& \\
& &(\ov t'_r)^{-1}&&&& \\
&\udots &&&&& \\
(\ov t_1')^{-1}& &&&&& \\
 \end{bmatrix}, \quad t'_i\in F^\times,\,  \gamma^\sharp\in S_{m+1}(F_0).
\end{align}
See \cite[Rem.~4.8]{Liu14} for a more ``intrinsic" definition of the invariants  $t_i$.
 Such a form is called the {\em normal form} (of any element in the orbit). 

For the unitary groups, we take the fixed basis of $E$ and $E^\vee$.  By \cite[Lem.~4.7]{Liu14}, a $N\times N$-orbit of a pre-regular element contains a unique element of the form 
\begin{align}\label{eq:n form U}
g=
\begin{bmatrix}  & &&&&& t_1\\
& &&&&\udots&\\
& &&&t_r&& \\
& &&g^\sharp&&& \\
& &\ov t_r^{-1}&&&& \\
&\udots &&&&& \\
\ov t_1^{-1}& &&&&& \\
 \end{bmatrix}, \quad t_i\in F^\times,\, g^\sharp\in G(W^\sharp)(F_0),
\end{align}
which is called the {\em normal form} (of any element in the orbit).

Next we recall that the notion of {\em regular} (called regular semisimple in \cite{Z21}) elements in $S_{m+1}$ and $G(W^\sharp)$ (relative to the action of $\GL_m$ and $G(W)$ respectively).  Using the basis of $W$ and $L$ we identify $\End_F(W\oplus L)\simeq  {\rm Mat}_{m+1,F}$. Then an element $\xi\in  {\rm Mat}_{m+1,F}$  is {\em regular} if $\{e, \xi e,\cdots, \xi^m e\}$ is an $F$-basis of $W\oplus L$ and $\{e^\vee, e^\vee \xi ,\cdots, e^\vee \xi^m \}$ is an $F$-basis of $W^\vee\oplus L^\vee$, where $e^\vee$ is a basis of $L^\vee=\Hom_F(L,F)$. Then we say that $g^\sharp\in G(W^\sharp)(F_0)$ (resp. $\gamma^\sharp\in S_{m+1}(F_0)$) is {\em regular} if it is regular as an element of ${\rm Mat}_{m+1,F}$. A regular $g^\sharp\in G(W^\sharp)(F_0)$ and $\gamma^\sharp\in S_{m+1}(F_0)$ are said to match if they are $\GL(W)\simeq \GL_m(F)$-conjugate when they are viewed as elements in $ \GL_{m+1}(F)$. 

Finally, we say that an element $\gamma\in S_n(F_0)$ (resp. $g\in G(V)$) is {\em regular} if it is pre-regular and $\gamma^\sharp\in S_{m+1}(F_0)$ (resp. $g^\sharp\in G(W^\sharp)(F_0)$) in its normal form  is regular. Denote by $S_n(F_0)_\rs$ (resp. $G(V)(F_0)_\rs$) the subset of regular elements. Then $\bH'(F_0)$ (resp.  $\bH(F_0)$) preserves $S_n(F_0)_\rs$ (resp. $G(V)(F_0)_\rs$). We will denote by $\bigr[S_n(F_0)\bigr]_\rs$ (resp. $\bigl[\U(V) (F_0)\bigr]_\rs$) the set of regular orbits.

A regular $g\in G(V)(F_0)$ and $\gamma \in S_{n}(F_0)$ are said to match if their normal forms  \eqref{eq:n form S} and \eqref{eq:n form U} satisfy
\begin{enumerate}
\item $t_i=t_i'$ for all $i=1,\cdots, r$, and 
\item  $g^\sharp\in G(W^\sharp)(F_0)$ and $\gamma^\sharp\in S_{m+1}(F_0)$ match.
\end{enumerate}

From now on we further assume that $m\geq 1$.
By \cite[Prop. 4.12]{Liu14} the matching relation defines  a natural bijection of regular orbits, 
\begin{align}\label{eq:orb mat 0} 
\xymatrix{
 \coprod_{V} \bigl[\U(V) (F_0)\bigr]_\rs  \ar[r]^-\sim& \bigr[S_n(F_0)\bigr]_\rs},
\end{align}
 where the disjoint union runs over the set of isometry classes of $n$-dimensional $F/F_0$-Hermitian spaces $V$ containing a totally isotropic subspace of dimension $r$.

\subsection{Orbital integral: smooth transfer}\label{ss: transfer}
We recall orbital integrals \cite[\S4]{Liu14}. Now let $F/F_0$ be a quadratic extension of $p$-adic fields. 
Then there are exactly two isometry classes of $F/F_0$-Hermitian spaces of dimension $n$, denoted by $V_0$ and $V_1$. When $F/F_0$ is unramified, we will assume that $V_0$ has a self-dual lattice. Then the orbit bijection \eqref{eq:orb mat 0} becomes 
\[ \xymatrix{ \bigl[( \U(V_0) (F_0)\bigr]_\rs 
 \coprod\bigl[( \U(V_1) (F_0)\bigr]_\rs  \ar[r]^-\sim& \bigr[S_n(F_0)\bigr]_\rs}.
\]

Fix  a non-trivial continuous character $\psi_0: F_0\to \BC^\times$ , and set $\psi =\psi_0\circ\tr: F\to \BC^\times$. Abusing notation  we also denote the character 
 \begin{align}\label{eq:ch N}
\xymatrix{ \psi\colon N_{E^\sharp}(F)\ar[r]&\BC^\times},
 \end{align}
 sending $u\in N_{E^\sharp}(F)$ to  $\psi\circ \lambda( u)$, 
 \begin{align}
\xymatrix{ \psi\colon H(F_0)\ar[r]&\BC^\times}
 \end{align}
 sending $h\in H(F_0)$ to $\psi\circ \lambda\circ u(h)$, and 
  \begin{align}
\xymatrix{ \psi\colon \bH(F_0)\ar[r]&\BC^\times}
 \end{align}
  sending $(h_1,h_2)\in \bH(F_0)$ to $\psi(h_1^{-1})\psi(h_2)$. Similarly we denote
 \begin{align}
\xymatrix{ \psi\colon \bH'(F_0)\ar[r]&\BC^\times}
 \end{align}
 sending $\in \bH'(F_0)\mapsto \psi\circ \lambda\circ u(h)$.

 Let 
$$\xymatrix{\eta=\eta_{F/F_0}\colon F_0^\times\ar[r]&\{\pm 1\}}$$ 
be the quadratic character associated to $F/F_0$ by local class field theory.  Abusing notation  we also denote the character 
 \begin{align}
\xymatrix{ \eta\colon \bH'(F_0)\ar[r]&\{\pm 1\}}
 \end{align}
 sending $h\in \bH'(F_0)$ to $\eta\circ\det\circ \fkq'(h)$. Similarly, we let 
 \begin{align}
\xymatrix{ | \cdot|\colon \bH'(F_0)\ar[r]&\BR^\times}
 \end{align}
denote the character $h\mapsto | \det\circ \fkq'(h)|_{F_0}$.

For $V=V_0$ or $V_1$, $g \in G(V)(F_0)_\rs$ and $f \in C_c^\infty(G(V)(F_0))$, we define  the orbital integral
\[
   \Orb(g,f) := \int_{\bH(F_0)} f(h \cdot g)\psi(h) \, dh.
\]

For $\gamma \in S_n(F_0)_\rs$, $f' \in C_c^\infty(S_n(F_0))$, and $s \in \BC$, we define the orbital integral
\begin{equation}\label{Orb(gamma,f',s)}
   \Orb(\gamma,f',s) := \int_{\bH'(F_0)} f'(h\cdot \gamma ) \psi(h) \eta(h)| h|^s\, dh,
\end{equation}
where $\lv\phantom{a}\rv$ denotes the normalized absolute value on $F_0$.
We define the special values
\begin{equation}\label{Orb(gamma,f',s=0)}
   \Orb(\gamma, f') :=\omega(\gamma) \Orb(\gamma, f', 0)
	\quad\text{and}\quad
	\del(\gamma, f') := \omega(\gamma)\,\frac{d}{ds} \Big|_{s=0} \Orb({\gamma},  f',s),
\end{equation}
where $\omega(\gamma)$ is a transfer factor (defined in \cite[\S4.4,~(4.17)]{Liu14} at least when $F/F_0$ is unramified and $\gamma$ is a normal form).

\begin{definition}
A function $f' \in  C_c^\infty(S_n(F_0))$ and a pair of functions $(f_0,f_1) \in  C_c^\infty(\U(V_0)(F_0)) \times  C_c^\infty(\U(V_1)(F_0))$ are (smooth) \emph{transfers} of each other if for each $i \in \{0,1\}$ and each $g \in \U(V_i)(F_0)_\rs$,
\[
   \Orb(g,f_i) =  \Orb(\gamma,f')
\]
where $\gamma \in S(F_0)_\rs$ matches $g$.
\end{definition}

It is expected that for any given   $(f_0,f_1) \in  C_c^\infty(\U(V_0)(F_0)) \times  C_c^\infty(\U(V_1)(F_0))$ or $f' \in  C_c^\infty(S_n(F_0))$, a smooth transfer always exists, though this remains an open problem as of today.

\begin{remark}
We have assumed $m\geq 1$. The case $m=0$ (so that $n=2r+1$) requires little modification: in \eqref{eq:orb mat 0} there is only one Hermitian space $V$, i.e., the split one. 
\end{remark}

\subsection{Liu's FL conjecture}\label{s:FL}
We review the FL conjecture, cf.~\cite[\S4.4]{Liu14}. Now assume that $F/F_0$ is an unramified quadratic extension of $p$-adic field for an {\em odd} prime $p$. We assume that the character $\psi_0: F_0\to \BC^\times$ is of level zero (i.e., $\psi|_{O_{F_0}}=1$  but $\psi|_{\varpi^{-1}O_{F_0}}\neq 1$). Fix a self-dual $O_F$-lattice $\Lambda_0 \subset V_0$. We denote its stabilizer by $K_0\subset G(V_0)(F_0)$, a hyperspecial maximal compact open subgroup. Let $\bH'(O_{F_0})=K_0\cap \bH'(F_0)$. We normalize the Haar measure on 
$ \bH(F_0)$ (resp.  $\bH'(F_0)$) 
such that $\vol( \bH(O_{F_0}))=1$ (resp.  $\vol(\bH'(O_{F_0}))=1$). We have \cite[\S4.4]{Liu14}:
\begin{conjecture}[Liu's Fundamental Lemma conjecture]\label{FLconj}
The characteristic function $$\mathbf{1}_{S_n(O_{F_0})} \in C_c^\infty(S_n(F_0))$$ transfers to the pair of functions $$(\mathbf{1}_{K_0},0) \in C_c^\infty(G(V_0)(F_0))\times C_c^\infty(G(V_1)(F_0)).$$
\end{conjecture}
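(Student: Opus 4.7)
The plan is to reduce Conjecture \ref{FLconj} to the Jacquet--Rallis fundamental lemma in the $r=0$ case (due to Yun and Gordon) by explicitly unfolding the Whittaker character attached to the Bessel unipotent part of the orbital integral. By the regular orbit bijection \eqref{eq:orb mat 0}, it suffices to verify the matching identity on representatives in the normal forms \eqref{eq:n form S}, \eqref{eq:n form U} parametrized by $(t_1,\dots,t_r;\gamma^\sharp)$ and $(t_1,\dots,t_r;g^\sharp)$, where $\gamma^\sharp\leftrightarrow g^\sharp$ is a Jacquet--Rallis-matching pair for the rank-$(m+1)$ pair $(S_{m+1},\U(W^\sharp))$.

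The core analytic step is the decomposition of the Haar measures: on the symmetric space side, $\bH'=\Res_{F/F_0}U_{1^r,m+1,1^r}\rtimes\GL_{m,F_0}$ gives
\[
\Orb(\gamma,\mathbf{1}_{S_n(O_{F_0})},s) = \int_{\GL_m(F_0)} J(\gamma,h,s)\,\eta(h)|h|^s\,dh,\qquad J(\gamma,h,s) := \int_{U(F)}\mathbf{1}_{S_n(O_{F_0})}(u^{-1}h^{-1}\gamma\,\ov h\,\ov u)\psi(u)\,du,
\]
with $U = \Res_{F/F_0}U_{1^r,m+1,1^r}$, and analogously \eqref{eq:qt H} yields $\Orb(g,\mathbf{1}_{K_0}) = \int_{G(W)(F_0)} J_{\U}(g,h')\,dh'$. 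The central claim, to be proved by a direct matrix computation on normal forms, is a factorization
\[
J(\gamma,h,s) = \Bigl(\prod_{i=1}^{r}\Phi_i(t_i,h)\Bigr)\cdot J^\sharp(\gamma^\sharp,h^\sharp,s),
\]
where $J^\sharp$ is the inner Jacquet--Rallis orbital integral for $\gamma^\sharp$ on $S_{m+1}$, $h^\sharp$ is the projection of $h$ onto the middle Levi block, and each $\Phi_i$ is an explicit Kloosterman/Gauss-type local factor produced by integrating $\psi$ against the $i$-th anti-diagonal coordinate of $u$. A parallel calculation must yield the \emph{same} factors $\Phi_i$ on the unitary side.

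Granting the factorization, the transfer factor $\omega(\gamma)$ (which incorporates the $\eta\cdot|\cdot|^s$ twist) balances the normalizations, the common product $\prod_i\Phi_i$ cancels, and the matching identity reduces term-by-term to the Jacquet--Rallis FL for $(\GL_{m+1},S_{m+1})$ versus $(\U(W),\U(W^\sharp))$ applied to $(\gamma^\sharp,g^\sharp)$, which is known. The vanishing assertion on $V_1$ follows in the same way from the vanishing half of the Jacquet--Rallis FL provided the $\Phi_i$ are nonzero.

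The main obstacle is establishing the factorization of $J(\gamma,h,s)$ and verifying that the unitary-side factors match term-by-term. When some $t_i$ have negative valuation the locus $\{u : u^{-1}h^{-1}\gamma\,\ov h\,\ov u\in S_n(O_{F_0})\}$ is noncompact, so the $\Phi_i$ must be defined via analytic continuation in $s$; this in turn requires absolute convergence of $\Orb(\gamma,\mathbf{1}_{S_n(O_{F_0})},s)$ in $\Re(s)\gg 0$ uniformly on compacta of $\GL_m(F_0)$ before extracting the special value at $s=0$. A secondary subtlety is the careful tracking of the transfer factor $\omega(\gamma)$ and the sign constants so that the unitary-side factors (which carry neither $\eta$ nor $|\cdot|^s$) exactly cancel against the phases produced by the unipotent integration on the symmetric space side. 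The lattice-counting interpretation of these orbital integrals announced earlier in the paper should provide the combinatorial framework in which this delicate computation becomes tractable.
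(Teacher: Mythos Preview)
The statement you are attempting to prove is not a theorem in the paper but an \emph{open conjecture}: immediately after stating Conjecture~\ref{FLconj}, the paper says ``When $r>0$, the conjecture remains an open problem as of today.'' The paper does not give a proof, and your proposal should be read against that fact.

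What the paper \emph{does} prove is the special case in Proposition~\ref{prop:FL triv}: when the normal-form invariants $t_1,\dots,t_r$ are all units (equivalently $\CE^{0\flat}+g\CE^{0\flat}$ is unimodular), the orbital integral collapses exactly to the Jacquet--Rallis orbital integral for $(\gamma^\sharp,g^\sharp)$, with no residual Kloosterman factors at all. This is the only regime in which the clean Levi/unipotent decoupling you posit actually occurs.

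Your claimed factorization $J(\gamma,h,s)=\bigl(\prod_i\Phi_i(t_i,h)\bigr)\cdot J^\sharp(\gamma^\sharp,h^\sharp,s)$ is the entire content of the conjecture, and there is no reason to expect it to hold once some $t_i$ has nonzero valuation. In the lattice-counting picture of \S\ref{ss:lat}, the condition $u^{-1}h^{-1}\gamma\,\ov h\,\ov u\in S_n(O_{F_0})$ couples the anti-diagonal $t_i$-blocks to the central $\gamma^\sharp$-block through the lattice $\Lambda(g)$ of \eqref{eq:Lam g}; the bound there depends jointly on the $t_i$ and on $g^\sharp$, not separately. Your own proposal concedes this: ``the main obstacle is establishing the factorization,'' and when $\mathrm{val}(t_i)<0$ you already need analytic continuation just to make $\Phi_i$ well-defined. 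A proof that ``grants the factorization'' and then invokes Jacquet--Rallis is circular: the factorization is the conjecture. If such a decomposition were available by direct matrix computation, Liu's FL would not be open.
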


When $r=0$ this is the Jacquet--Rallis Fundamental Lemma conjecture. When $r>0$, the conjecture remains an open problem as of today.

\begin{remark}
 Liu also allows the case $m=0$ in \cite[\S4.4]{Liu14},  which was already formulated by Jacquet and proved in some low dimensional cases in \cite{J92}. For the formulation of the AFL conjecture, we will always assume $m\geq 1$.
 \end{remark}

The easy part of the conjecture is known by \cite[Prop.~4.16]{Liu14}, i.e., when $\gamma$ matches an element $g\in G(V_1)(F_0)$:
$$
 \Orb(\gamma,f')=0.
$$
In the next section, we will formulate a conjecture relating the first derivative $\del(\gamma, f')$ to certain arithmetic intersection numbers.
\subsection{FL: an interpretation as lattices counting}
\label{ss:lat}

Suppose now that $V$ is the split Hermitian space and that the special vector $e\in W^\sharp$ has norm $(e,e)=1$ (or a unit). In this subsection we let  $\CN'_{n}$ denote the set $\Ver^0(V)$ of self-dual lattices $\Lambda\subset V$. As the notation suggests, $\CN'_{n}$ is the set of connected components of the RZ space in the totally definite case, cf. Remark \ref{rem:t def}.

Recall we have fixed a basis $\{e_1,\cdots, e_r\}$ of $E$, a basis $\{e\}$ of $L$, and $e_{r+1}\in E^\sharp$ a lifting $e\in L$. 
We consider the $O_F$-lattices 
$$
\CE^0_i=\pair{e_1,\cdots, e_i}\subset E_i, \quad 1\leq i\leq r+1.
$$
Denote
\begin{align}\label{eq:CE0}
\CE^{0,\flat}=\CE^0_r\subset E,\quad \CE^{0}=\CE^0_{r+1}\subset E^\sharp.
\end{align}

Let $\CN'_{P}$ be the set of $0=\Lambda_0\subset \Lambda_1\subset \cdots \subset \Lambda_{2r+1}$ such that $\Lambda_{2r+1}\in \CN_n'$ and $\Lambda_i=V_i\cap \Lambda_{2r+1}$ for all $i=1,\cdots, 2r+1$. (Here recall $V_i$ from \eqref{eq:Vi}.) Then the map $\Lambda_\bullet\mapsto \Lambda_{2r+1}$ defines a bijection:
$$
\xymatrix{\CN'_{P}\ar[r]^-{\sim}&\CN_n'.}
$$
Therefore we will freely switch between the two sets. Let $\CN'_M$ denotes the set of tuples $(\CL_1,\CL_2,\cdots,\CL_m,\Lambda^\sharp)$ where $\CL_i$ are rank one lattices in  $V_i/V_{i-1}=E_{i}/E_{i-1}$ for $i=1,\cdots, r$ and $\Lambda^\sharp$ is a self-dual lattice in $W^\sharp$.
There is a natural map 
$$
\xymatrix{\CN'_{P}\ar[r]&\CN'_M.}
$$
sending $\Lambda_\bullet$ to $\CL_i=\Lambda_{i}/\Lambda_{i-1} $ for $i=1,\cdots, r$ and $\Lambda^\sharp=\Lambda_{r+1}/\Lambda_r$ (this is self-dual by Lemma \ref{lem: lat}).

Let  $\CN'_{m}=\Ver^0(W)$ (resp. $\CN'_{m+1}=\Ver^0(W^\sharp)$) denote the set of self-dual lattices in $W$  (resp. in $W^\sharp$). We have an embedding 
\begin{align}\label{eq:iota' m}
\xymatrix{\iota:\CN'_{m}\ar[r]&\CN'_{m+1}}
\end{align}
with the image consisting of lattices $\Lambda^\sharp\in \Ver^0(W^\sharp)$ such that $e\in \Lambda^\sharp$. There is an embedding
\begin{align}\label{eq:iota' m+1}
\xymatrix{\iota_{\{e_1,\cdots,e_r\}}:\CN'_{m+1}\ar[r]&\CN'_{M}}
\end{align}
with the image consisting of $(\CL_1,\cdots,\CL_r,\Lambda^\sharp)$ such that $\CL_i=\pair{e_i}$.  For the Bessel subgroup $H$, we define 
\begin{align}\label{eq:N' H}
\xymatrix{\CN'_{H}\ar[r] \ar[d]& \CN'_P\ar[d]\\   \CN'_{m}\ar[r]&\CN'_M}
\end{align}
where the bottom map is the composition of the two embeddings \eqref{eq:iota' m} and \eqref{eq:iota' m+1}.

Unfolding the definition we see that  $\CN'_{H}$ is the set of lattices  $\Lambda\in \CN_n'=\Ver^0(V)$ such that, for $1\leq i\leq r+1$, setting $\CE_i:=E_i\cap \Lambda$, then we have $\CE_{i}/\CE_{i-1}=\pair{e_i}$ (as lattices in $E_{i}/E_{i-1}$). (Here we warn the reader that, in \eqref{eq:Vi}, $V_i=E_i$ for $i\leq r$ but $V_{r+1}\neq E_{r+1}=E^\sharp$. This is the reason that here we wrtite $\CE_i$ rather than $\Lambda_i$.) We choose the more laborious definition above to be consistent with the arithmetic analog in the next section.

Let $\CN'_{m,n}= \CN'_{m}\times \CN'_{n}$ with the natural action by the product group $(G(W)\times G(V))(F_0)$. We then have a map
$$
\xymatrix{\CN'_H\ar[r]&\CN'_{m,n}}
$$
sending $\Lambda\in \CN'_H\mapsto (\Lambda^\flat,\Lambda)$.  (Here via the embedding $\CN'_H\incl\CN'_n$ we write an element in $\CN'_H$ as $\Lambda\in \CN_n'$.)

Let 
$\FL^\square_{E^\sharp}$ denote the set of (framed) complete flags $\CE_\bullet$ of lattices in $E^\sharp$:
$$
0\subset\CE_1\subset \CE_2\subset\cdots \subset \CE_{r+1}=\CE
$$
such that $\CE_i=\CE\cap E_i$ is of rank $i$ and
$\CE_{i}/\CE_{i-1}=\pair{e_i}, 1\leq i\leq r+1.$ (The word``framed" refers to $\CE_{i}/\CE_{i-1}=\pair{e_i}$ for the fixed $e_i$.) Then there is a natural bijection
\begin{align}\label{eq:fl sq'}
\FL^\square_{E^\sharp}\simeq N_{E^\sharp}(F)/N_{E^\sharp}(O_F),
\end{align}
where the neutral point in the right hand side corresponds to the (framed) flag  $\CE^0_\bullet$ that we have fixed via \eqref{eq:CE0}.  (Here $ N_{E^\sharp}$ is defined in \S\ref{ss:B sgp}.) The character of $N_{E^\sharp}(F)$ defined by \eqref{eq:ch N} induces a character
$$\xymatrix{\psi:\FL^\square_{E^\sharp}\ar[r] &\BC^\times.} 
$$
There is a natural map 
$$\xymatrix{ \CN'_H\ar[r] &\FL^\square_{E^\sharp}} 
$$
sending $\Lambda$ to $\CE_i=\Lambda\cap E_i, 1\leq i\leq r$ and $\CE=\Lambda\cap E^\sharp$.
We also denote by 
$$\psi:\CN'_H\to \BC^\times $$
 the pullback of the character of $\FL^\square_{E^\sharp}$. 

Then we may interpret the orbital integral in \S\ref{ss: transfer} as a weighted lattice counting: for a regular $g\in G(V)$,
\begin{align}\label{eq:orb 1U}
\Orb(g,1_{G(O_{F_0})})=\sum_{(\Lambda,\Lambda')\in \CN'_{H}\times \CN'_H \atop
\Lambda'=g\Lambda,\, \Lambda^\flat=\Lambda'^\flat}  \psi(\Lambda)\ov\psi(\Lambda').
\end{align}
 
As an analog of the intersection problem to be introduced in the next section, we show that the sum is finite.
\begin{proposition}\label{prop:int fin}
Suppose that $g\in G(V)(F_0)$ is regular. 
Then the set 
$$
\CN'_{H}\cap (1,g)\cdot\CN'_H:=\Bigl\{ (\Lambda,\Lambda')\in \CN'_{H}\times \CN'_H\, \big\mid\,
\Lambda'=g\Lambda, \, \Lambda^\flat=\Lambda'^\flat\Bigr\}
$$is finite.
\end{proposition}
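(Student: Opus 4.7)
The plan is to sandwich any admissible $\Lambda$ and the associated $\Lambda^\flat$ between explicit $O_F$-lattices in $V$ and in $W$ respectively. The regularity of $g$ makes both sandwiches non-degenerate, reducing the problem to two finite counts.

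First, from the constraints I extract four sublattice inclusions into $\Lambda$: (a) $\Lambda\supset \CE^0_{r+1}$, from $\Lambda\in \CN'_H$; (b) $\Lambda\supset g^{-1}\CE^0_{r+1}$, from $g\Lambda\in \CN'_H$ applied to $g\Lambda$; (c) $\Lambda\supset \Lambda^\flat$, since $\Lambda^\flat=\Lambda\cap W$ (tautologically a sublattice of $\Lambda$); (d) $\Lambda\supset g^{-1}\Lambda^\flat$, from the condition $\Lambda^\flat=(g\Lambda)\cap W\subset g\Lambda$. Setting $L(\Lambda^\flat):=\CE^0_{r+1}+g^{-1}\CE^0_{r+1}+\Lambda^\flat+g^{-1}\Lambda^\flat$, its $F$-span is $E^\perp+g^{-1}E^\perp$, which equals $V$ precisely when $E\cap g^{-1}E=0$. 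This latter condition is $\bH$-invariant in $g$ (since $(h_1^{-1}gh_2)E\cap E=h_1^{-1}(gE\cap E)$ by $HE=E$), and it holds on the normal form \eqref{eq:n form U}, where $g_0E=E^\vee$ is a complementary isotropic subspace to $E$. Hence $L(\Lambda^\flat)$ is a full-rank $O_F$-lattice in $V$, and self-duality of $\Lambda$ yields $L(\Lambda^\flat)\subset \Lambda\subset L(\Lambda^\flat)^\vee$, so the number of $\Lambda$ with a fixed $\Lambda^\flat$ is bounded by the finite quantity $\#(L(\Lambda^\flat)^\vee/L(\Lambda^\flat))$.

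Second, I bound the possibilities for $\Lambda^\flat$ itself. Put $L_0 := \CE^0_{r+1}+g^{-1}\CE^0_{r+1}$, which is independent of $\Lambda^\flat$. From $L_0\subset \Lambda$ together with self-duality, $\Lambda\subset L_0^\vee$ (viewed as the set of $v\in V$ pairing integrally with $L_0$), and therefore $\Lambda^\flat\subset L_0^\vee\cap W$. Using $W\perp E^\sharp$, this intersection is cut out by the integrality conditions $(gw,e_i)\in O_F$ for $1\leq i\leq r$ and $(gw,e)\in O_F$ on $w\in W$. For regular $g$, the tuple of functionals $w\mapsto ((gw,e_1),\ldots,(gw,e_r),(gw,e))$ is not identically zero on $W$: otherwise $gW\subset E^{\sharp\perp}=E+W$, and after reduction to the normal form this forces $g^\sharp\in G(W)$, contradicting the refined regularity requirement that $\{e,g^\sharp e,\ldots,(g^\sharp)^m e\}$ be a basis of $W^\sharp$. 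Hence $M_0:=L_0^\vee\cap W$ is a full-rank $O_F$-lattice in $W$, and self-duality of $\Lambda^\flat$ in $W$ gives the sandwich $M_0^\vee\subset \Lambda^\flat\subset M_0$, with at most $\#(M_0/M_0^\vee)$ possibilities for $\Lambda^\flat$. Combining with the previous step, the total set is finite.

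The main obstacle will be the reduction-to-normal-form step in the second paragraph: $\bH$-conjugation of $g$ does not preserve the set under study, so its use must be justified either by tracking the induced action on $(\Lambda,\Lambda^\flat)$ or, preferably, by reformulating the non-triviality condition $gW\not\subset E+W$ directly in terms of $\bH$-invariants of $g$. Once this is in place, the remaining calculations are elementary manipulations with lattices and Hermitian forms.
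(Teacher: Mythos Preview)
Your claims (a) and (b) are incorrect: membership $\Lambda\in\CN'_H$ does not force $\CE^0_{r+1}\subset\Lambda$. The defining condition fixes only the successive quotients $\CE_i/\CE_{i-1}=\langle e_i\rangle$ (as rank-one lattices in $E_i/E_{i-1}$), not the lattices $\CE_i=E_i\cap\Lambda$ themselves. Concretely, $e_1\in\Lambda$ always holds since $\CE_1=\langle e_1\rangle$, but already $\CE_2$ can be any lattice of the form $\langle e_1,\, e_2+ce_1\rangle$ with $c\in F$, and $e_2\in\Lambda$ only when $c\in O_F$. The map $\CN'_H\to\FL^\square_{E^\sharp}\simeq N_{E^\sharp}(F)/N_{E^\sharp}(O_F)$ of \eqref{eq:fl sq'} records exactly this variation, and its target is infinite. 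The same applies to $\Lambda'=g\Lambda$, so your lattice $L_0=\CE^0_{r+1}+g^{-1}\CE^0_{r+1}$ is not contained in $\Lambda$, and both sandwiches collapse. You could use the $\bH$-action to normalize one of the two framed flags to $\CE^0_\bullet$, but not both simultaneously; bounding the \emph{pair} of flags is the actual content of the argument. (There is also a smaller issue with (c): $\Lambda^\flat$ is by definition a lattice in the subquotient $W=E^\perp/E^\sharp$, so $\Lambda^\flat=\Lambda\cap W$ only makes sense after choosing the splitting $V=W^\sharp\oplus(E+E^\vee)$ and checking compatibility.)

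The paper proceeds differently. After passing to the normal form \eqref{eq:n form U}, it uses only integrality of the Hermitian form on the self-dual $\Lambda'$: from $\CE'^\flat+g\CE^\flat\subset\Lambda'$ one gets $(\CE'^\flat,g\CE^\flat)\subset O_F$, and since in normal form $gE=E^\vee$, this pairing is nondegenerate and bounds $\CE^\flat,\CE'^\flat$ explicitly in terms of the $t_i$. Only then does the special vector enter, via an inductive step showing $\varpi^M g^ie_{r+1}\in\Lambda'$ for all $i\ge0$ (using that in normal form $g^ie_{r+1}$ lifts $(g^\sharp)^ie$ and is orthogonal to $E+E^\vee$). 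Regularity of $g^\sharp$ then makes $\{e_{r+1},ge_{r+1},\ldots,g^me_{r+1}\}$ together with $\CE^{0\flat}+g\CE^{0\flat}$ span $V$, giving the full-rank lower bound $\Lambda(g)\subset\Lambda'$. Your strategy, once repaired to let $\CE,\CE'$ vary, would need to recreate both of these steps.
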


\begin{proof} For $\Lambda\in \CN'_H$ we denote $$\CE=\Lambda\cap E^\sharp, \quad \CE^\flat=\Lambda\cap E$$ and denote by $\CE'$ and $\CE'^\flat$ the analogous lattices for $\Lambda'\in \CN'_H$. Note that, by $\Lambda\in \CN'_H$, $\CE$ (resp. $\CE^\flat$) induces a framed flag in $E^\sharp$ (resp. in $E$).

We may and do assume that $g$ is a normal form \eqref{eq:n form U}. 
We may factorize the embedding $\CN'_H\to \CN_P$ into two steps by the two embeddings  \eqref{eq:iota' m} and \eqref{eq:iota' m+1}. Thus we define the fiber product\begin{align}\label{eq:N' H}
\xymatrix{\CN'_{E}\ar[r] \ar[d]& \CN'_P\ar[d]\\   \CN'_{m+1}\ar[r]&\CN'_M.}
\end{align}
We first consider the following set
$$
(\CN'_{E}\times \CN'_E)_g:=\Bigl\{ (\Lambda,\Lambda')\in \CN'_{E}\times \CN'_E\, \big\mid\,
\Lambda'=g\Lambda\Bigr\}.
$$
By definition we have $\CE^\flat\subset \Lambda$ and $\CE'^\flat\subset \Lambda'$. 
We thus obtain
 $$\CE'^\flat+g\CE^\flat\subset \Lambda'.$$
In particular the lattice $\CE'^\flat+g\CE^\flat$ is integral, or equivalently $(\CE'^\flat, g\CE^\flat ) \subset O_F$. 
 Using the normal form  \eqref{eq:n form U} of $g$, it is easy to see that the integrality condition implies that both $\CE^\flat$ and $\CE'^\flat$ are contained in $\varpi^{-N} \CE^{0\flat}$ for some large integer $N$ (depending only on $t_1,\cdots, t_r$ in \eqref{eq:n form U} ). Here $\CE^{0\flat}$ is the fixed lattice in \eqref{eq:CE0}. It follows that both $\CE^\flat$ and $\CE'^\flat$ contain $\varpi ^N\CE^{0\flat}$. Therefore there are only finitely possibilities for the lattices $\CE^\flat,\CE'^\flat$;  moreover we have 
\begin{align}\label{eq:Lam1}
\varpi ^N(\CE^{0\flat}+g\CE^{0\flat})\subset \Lambda'.
\end{align}

Now we return to prove the finiteness of $\CN'_{H}\cap  (1,g)\cdot\CN'_H$. Now we have $e\in \Lambda^\sharp=\Lambda'^\sharp$. With the above result on $(\CN'_{E}\times \CN'_E)_g$, a similar integrality consideration shows that $\Lambda$ contains a lifting of $e$ that differs from $e_{r+1}$ (the fixed lifting of $e$, which is orthogonal to $E+E^\vee$) by an element in $\varpi^{-M}\CE^{0\flat}$ for some large integer $M$ (depending only on $g$). It follows that $\varpi^Me_{r+1}\in \Lambda$ and the same holds for $\Lambda'$. Note that in the normal form, for any $i\geq 1$, the element $g^ie_{r+1}$ is a lifting of $(g^\sharp)^i e$,  and it is orthogonal to $E+E^\vee$. Repeating the argument we may show by induction that 
\begin{align}\label{eq:Lam2}
\varpi^M g^ie_{r+1}\in \Lambda', \quad \forall i\geq 1.
\end{align}

By \eqref{eq:Lam1} and \eqref{eq:Lam2} we conclude that  $\Lambda'$ contains the lattice 
\begin{align}\label{eq:Lam g}
\Lambda(g)=\varpi^M\pair{ e_{r+1},ge_{r+1}, \cdots g^m e_{r+1}}+ \varpi ^N(\CE^{0\flat}+g\CE^{0\flat}).
\end{align} By the regularity of $g$, this last lattice $\Lambda(g)$ has full rank in $V$. By the self-duality of $\Lambda'$, we conclude the desired finiteness of  $\CN'_{H}\cap  (1,g)\cdot\CN'_H$.

\end{proof}

In fact the proof also reduces the ``trivial" case of Liu's conjecture to the Jacquet--Rallis FL.
\begin{proposition}\label{prop:FL triv}
Let $g\in G(V)_\rs$ be a normal form  \eqref{eq:n form U}.
Assume that $\CE^{0\flat}+g\CE^{0\flat}$ is a unimodular lattice in $E+g E$ (equivalently, in terms of  \eqref{eq:n form U}, all $t_i$ are units). Then Conjecture \ref{FLconj} for such $g$ in the $(m,n)$-case reduces to  Conjecture \ref{FLconj} for $g^\sharp$ in the $(m,m+1)$-case.
\end{proposition}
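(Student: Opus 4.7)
The plan is to use the lattice-counting interpretation \eqref{eq:orb 1U} of Liu's orbital integral on the Hermitian side, together with the analogous expansion on the symmetric-space side, and to show that when all the invariants $t_i$ (equivalently $t'_i$) are units the two sides each factor as (i) a trivial Whittaker contribution along the $E^\sharp$-flag directions, and (ii) the $(m,m+1)$-Jacquet--Rallis orbital integral attached to $g^\sharp$ and $\gamma^\sharp$. Since the $(m,m+1)$ case of Conjecture \ref{FLconj} is exactly the Jacquet--Rallis FL and is known, the reduction follows.

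On the Hermitian side, the starting point is the observation already used in the proof of Proposition \ref{prop:int fin}: any $\Lambda \in \CN'_H$ satisfies $\Lambda \cap E_i = \pair{e_1,\ldots,e_i}$ for $i \le r$, hence $\Lambda \cap E = \CE^{0\flat}$ and, by self-duality of $\Lambda$, $\Lambda \cap E^\vee = \CE^{0\flat,\vee}$. Under the unit hypothesis the normal form \eqref{eq:n form U} gives $g(\CE^{0\flat}) = \CE^{0\flat,\vee}$ and $g|_{W^\sharp} = g^\sharp$. I would sharpen the integrality argument at the end of the proof of Proposition \ref{prop:int fin} (with the bounds $N = M = 0$) to conclude that any $\Lambda \in \CN'_H$ with $g\Lambda \in \CN'_H$ necessarily splits as $\Lambda = \CE^{0\flat} \oplus \Lambda^\sharp \oplus \CE^{0\flat,\vee}$ with respect to the chosen decomposition $V = E \oplus W^\sharp \oplus E^\vee$, where $\Lambda^\sharp \in \iota(\CN'_m) \subset \CN'_{m+1}$ is self-dual in $W^\sharp$ and contains $e$. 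In particular the induced framed flag on $E^\sharp$ is the base flag, so $\psi(\Lambda) = \psi(g\Lambda) = 1$, and \eqref{eq:orb 1U} collapses to the lattice count parametrizing $\Orb(g^\sharp, \mathbf{1}_{K_0^\sharp})$. An entirely analogous analysis on the symmetric-space side, combined with an Iwasawa-type decomposition of $\bH'(F_0)$ that identifies the Whittaker integration over $\Res_{F/F_0} U_{1^r,m+1,1^r}$ against $\psi \circ \lambda'$ with the trivial character integral evaluating to $1$, yields $\Orb(\gamma, \mathbf{1}_{S_n(O_{F_0})}, s) = \Orb(\gamma^\sharp, \mathbf{1}_{S_{m+1}(O_{F_0})}, s)$ identically in $s$, hence also after differentiation at $s = 0$. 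A direct inspection of the definition in \cite[\S4.4,~(4.17)]{Liu14} shows that the transfer factor $\omega(\gamma)$ specializes to $\omega(\gamma^\sharp)$ when the $t'_i$ are units.

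The main obstacle is establishing the splitting assertion rigorously: in the absence of the unit condition on $t_i$, a lattice $\Lambda \in \CN'_H$ can have nontrivial extension classes mixing the three direct summands $E$, $W^\sharp$, $E^\vee$, and these are in general not killed even when $g\Lambda \in \CN'_H$ is also imposed. One must use both the self-duality of $\Lambda$ and the integrality constraint coming from $g\Lambda \cap E = \CE^{0\flat}$ together with $t_i \in O_F^\times$ to rule all of them out. A secondary verification is that the normalizations of the Haar measures on $\bH$ and $\bH'$ fixed in \S\ref{s:FL} are compatible with the Jacquet--Rallis normalizations once the unipotent directions have been factored out; this amounts to a routine volume computation using $\vol(\bH(O_{F_0})) = \vol(\bH'(O_{F_0})) = 1$ and the analogous normalization for $\U(W)(O_{F_0})$ and $\GL_m(O_{F_0})$.
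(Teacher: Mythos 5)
Your overall strategy matches the paper's: specialize the lattice-count \eqref{eq:orb 1U} under the unit hypothesis, use unimodularity of $\CE^{0\flat}+g\CE^{0\flat}$ in $E+gE$ to force a direct-sum splitting $\Lambda = \Lambda^\sharp\oplus(\CE^{0\flat}+g\CE^{0\flat})$ for both $\Lambda$ and $\Lambda' = g\Lambda$, observe that the $\psi$-weights trivialize, and conclude that the count collapses to the $(m,m+1)$-case, with the $S_n$-side handled analogously.

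However, one step in your second paragraph is wrong as stated. You assert that self-duality of $\Lambda$ alone, together with $\Lambda\cap E = \CE^{0\flat}$, already gives $\Lambda\cap E^\vee = \CE^{0\flat,\vee}$. This implication fails: a self-dual lattice can satisfy $\Lambda\cap E=\CE^{0\flat}$ while meeting the complementary isotropic space $E^\vee$ in a strictly smaller lattice, because one may mix $e_i$-directions into the $e_j^\vee$-directions over $O_F$ without destroying self-duality (already in the rank-one hyperbolic plane one has, e.g., $\pair{e_1,\varpi^{-1}e_1+e_1^\vee}$). The correct route, which the paper uses and which you do gesture at in your final paragraph, is to exploit $g\Lambda\in\CN'_H$: this forces $g\Lambda\cap E=\CE^{0\flat}$, hence $\Lambda\cap g^{-1}E = g^{-1}\CE^{0\flat}$; in the normal form \eqref{eq:n form U} one has $g^{-1}E = E^\vee$ and $g^{-1}\CE^{0\flat}=\pair{t_1^{-1}e_1^\vee,\dots,t_r^{-1}e_r^\vee}$, which equals $\CE^{0\flat,\vee}$ precisely when all $t_i$ are units. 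With that correction your argument lines up with the paper's (which leaves the $S_n$-side ``similar argument'' to the reader, where your Iwasawa-decomposition remark is a reasonable way to fill it in), and the splitting itself is then the standard fact that a unimodular sublattice of a self-dual lattice splits off orthogonally — no separate ``extension class'' analysis is needed once the two inclusions $\CE^{0\flat}\subset\Lambda$ and $g\CE^{0\flat}\subset\Lambda$ are in hand.
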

\begin{proof}
Under the assumption, the inclusion \eqref{eq:Lam1} becomes 
$$
\CE^{0\flat}+g\CE^{0\flat}\subset\Lambda'.
$$
It follows that 
$$
\Lambda'=\Lambda'^\sharp\oplus(\CE^{0\flat}+g\CE^{0\flat}),\quad \Lambda=\Lambda^\sharp\oplus(\CE^{0\flat}+g\CE^{0\flat})
$$
and the orbital integral \eqref{eq:orb 1U} becomes
$$
\Orb(g,1_{G(O_{F_0})})=\sum_{(\Lambda^\sharp,\Lambda'^\sharp)\in \CN'_{m+1}\times \CN'_{m+1} \atop
\Lambda'^{\sharp}=g^\sharp\Lambda^\sharp,\, \Lambda^\flat=\Lambda'^\flat} 1.
$$
Similar argument reduces the orbital integral on $S_n$ to that on $S_{m+1}$; we omit the details.

\end{proof}
\begin{remark}
The $(m,m+1)$-case of   Conjecture \ref{FLconj}  is proved by Yun--Gordan (for large $p$) \cite{Y,Go}, and independently by Beuzart-Plessis (for $p>2$) \cite{BP19}.
\end{remark}
\section{Intersection numbers of Bessel cycles}
\label{s:AFL}

 Let $F/F_0$ be an unramified quadratic extension of $p$-adic fields. We assume $F_0=\BQ_p$ for simplicity. Our formulation of the AFL conjecture in the Bessel case will be modeled on the lattice counting interpretation of FL in \S\ref{ss:lat}.
\subsection{The generalized diagonal cycle (or the Bessel cycle)}
\label{ss:Bcycle}
Let $m\in \BZ_{\geq 1}, r\in\BZ_{\geq 0}$ and $n=m+1+2r$. Recall from \S\ref{ss:Nn} the unitary RZ space $\CN_n$, and from \S\ref{ss:fil U} the filtered RZ space $\CN_P$, where $P$ stands for a parabolic subgroup $P_{1^r,m+1,1^r}$ of $G(V)$. 
We have $E_i=\Hom^\circ_{O_F}(\BE,\BX_i)\subset \BV, i=1,\cdots, r$, which are isotropic subspaces of $\BV=\BV_n$. We choose basis $\{e_1,\cdots,e_r\}$ of $E=E_r$ such that
$$E_i=\pair{e_1,\cdots,e_i}_F, \quad  i=1,\cdots, r.
$$ 
Denote by $\BW^\sharp=E^\perp/E$ and fix an isomorphism of $\BW^\sharp$ with $\BV_{m+1}$ defined by \eqref{eq:Vn} with $n$ replaced by $m+1$. Fix a vector $e\in \BW^\sharp$ such that $(e,e)$ is a unit and let $\BW^\sharp=\BW\oplus \pair{e}_F$ be the orthogonal decomposition.  

We recall the morphism \eqref{eq:N MP}
\begin{align}\xymatrix{\CN_{1^r}\ar[r]&\CN_{M_P}:= (\CN_{1})^r\times_{\Spf O_{\bre F}} \CN_{m+1}.}
\end{align}
 Let $\CZ_{m+1}(e)$ be the KR divisor associated to $u\in\BW^\sharp$; then $\CZ_{m+1}(e)\simeq \CN_m$ and hence we have an induced embedding 
 $$
 \xymatrix{\iota\colon \CN_{m}\ar[r]&\CN_{m+1}.}
 $$Similar to the definition of the Bessel subgroup $H$ by \eqref{eq:H def2},
 we define  the fiber product as the Bessel formal scheme $\CN_H$: 
\begin{align}\label{eq:N H}
\xymatrix{\CN_{H}\ar[r] \ar[d]& \CN_P\ar[d]\\ \Spf O_{\bre F}\times_{\Spf O_{\bre F}} \CN_{m}\ar[r]&(\CN_1)^r\times_{\Spf O_{\bre F}}\CN_{m+1}}
\end{align}
where on the bottom row the first factor $\Spf O_{\bre F}\to (\CN_1)^r$ is the neutral component (with the chosen basis), cf. \eqref{eq:N 11}, and the second factor is $\iota$ above.
Let
$$\CN_{m,n}:=\CN_{m}\times_{\Spf O_{\bre F}} \CN_n.
$$
We have a morphism induced by \eqref{eq:N H} and $\CN_P\to \CN_{n}$,
$$
\xymatrix{\CN_{H}\ar[r] &\CN_{m,n}} 
$$
whose image is called the Bessel cycle.
\begin{remark}The Bessel cycle $\CN_H$ may be viewed as  a correspondence (in a loose sense) over the product $\CN_{m}\times_{\Spf O_{\bre F}} \CN_n$. In view of the arithmetic diagonal cycle in the AGGP conjecture \cite{RSZ3}, we may call it the  {\em (local) ``generalized (arithmetic) diagonal cycle"}.
\end{remark}

We need to add a weight factor to the Bessel cycle $\CN_H$ and therefore we need to find a partition of $\CN_H$ into a union of open-and-closed  formal subschemes, similar to \eqref{eq:N 1r}. For this purpose we consider certain auxiliary formal schemes.
Let $P'=P_{1^r,1,m,1^r}$ denote the subgroup $P\times_{G(W^\sharp)}G(W)$ of $P$. (However, note that $P'$ is not a parabolic subgroup of $G(V)$.)  Then we consider the fiber product, denoted by $\CN_{P'}$,
\begin{align}\label{eq:N P1}
\xymatrix{\CN_{P'}\ar[r] \ar[d]& \CN_{P}\ar[d]\\ \CN_{m}\ar[r]^\iota&\CN_{m+1} .}
\end{align}
 In terms of the moduli functor, for $S\in\Nilp$,  $\CN_{P'}(S)$ is the set of $(X,X_\bullet,\iota,\lambda,\rho)$ such that $X_{r+1}/X_r\in\CZ_{m+1}(e)(S)$, i.e., it is a direct product (compatible with the addtional structure)
\begin{align}\label{eq:flat}
X_{r+1}/X_r=\sE_S\times X^\flat.
\end{align}
We have a pull-back of the bottom row 
 \begin{align*}
\xymatrix{ 0\ar[r]& X_r\ar[d]^{=} \ar[r]&X'_{r+1}\ar[r]\ar[d]&\sE_S\ar[d] \ar[r]&0\\ 
0\ar[r]& X_r \ar[r]&X_{r+1}\ar[r]&X_{r+1}/X_r  \ar[r]&0.}
\end{align*}
It follows that there is a morphism 
\begin{align}\label{eq:N P2}
\xymatrix{\CN_{P'} \ar[r]& \CN_{1^{r+1}}}
\end{align}
sending a tuple above to $X'_{r+1}$ with its filtration $0=X_0\subset X_1\subset \cdots\subset X_r\subset X'_{r+1}$.  Let $E^\sharp$ denote the preimage of the line $\pair{u}_F$ under the quotient map $E^\perp\to \BW^\sharp$, and let $e_{r+1}\in E^\sharp$ be a lifting of $u$. Similar to \eqref{eq:N 1r}, there is a natural bijection
\begin{align}\label{eq:N 1r+1}
\CN_{1^{r+1}}\simeq \coprod_{\FL_{E^\sharp}}\Spf O_{\bre F},
\end{align}
where $\FL_{E^\sharp}$ denotes the set of complete flags $\CE_\bullet$ of lattices in $E^\sharp$:
$$
0\subset\CE_1\subset \CE_2\subset\cdots \subset \CE_{r+1}=\CE
$$
such that $\CE_i=\CE\cap E_i$ is of rank $i$. 
Let $\CN_{E^\sharp}$ denote the subfunctor of $\CN_{1^{r+1}}$ defined by 
\begin{align}\label{eq:N E sh}
\CN_{E^\sharp} \simeq \coprod_{\FL^\square_{E^\sharp}}\Spf O_{\bre F},
\end{align}
corresponding to the subset $\FL^\square_{E^\sharp}$  of flags  $\CE_\bullet$ satisfying
$$
\CE_{i}/\CE_{i-1}=\pair{e_i},\quad 1\leq i\leq r+1.
$$There is a natural bijection
\begin{align}\label{eq:fl sq}
\FL^\square_{E^\sharp}\simeq N_{E^\sharp}(F)/N_{E^\sharp}(O_F),
\end{align}
where $N_{E^\sharp}$ denotes the unipotent radical of the Borel subgroup (for the complete flag $E_\bullet$) of $\GL_F(E^\sharp)\simeq \GL_{n,F}$, with the basis $\{e_1,\cdots,e_{r+1}\}$ of $E^\sharp$.

Then the Bessel formal scheme $\CN_{H}$ fits into a Cartesian diagram
\begin{align}\label{eq:N P1}
\xymatrix{\CN_{H}\ar[r] \ar[d]& \CN_{P'}\ar[d]\\ \CN_{E^\sharp}\ar[r]&\CN_{1^{r+1}} .}
\end{align}
The morphism $\CN_{H}\to  \CN_{E^\sharp}$ induces a partition as a disjoint union of open-and-closed formal subschemes
$$
\CN_{H}=\coprod_{\CE_\bullet\in \FL^\square_{E^\sharp}} \CN_{H,\CE_\bullet}.
$$

\begin{proposition}\label{p:fil2KR1}
There is a natural isomorphism:
$$
\xymatrix{\CZ(\CE)^{\dag} \ar[r]^-{\sim}& \CN_{H,\CE_\bullet}.}
$$
\end{proposition}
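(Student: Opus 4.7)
The plan is to deduce this from Proposition \ref{p:fil2KR} applied to the totally isotropic part of $\CE$, combined with the fiber product defining $\CN_{H,\CE_\bullet}$. Write $\CE_r := \CE\cap E$, a totally isotropic rank-$r$ lattice, and $\CE = \CE_{r+1} = \CE_r + O_F \cdot e_{r+1}$, where $e_{r+1}\in E^\sharp$ lifts the unit-norm vector $e \in \BW^\sharp$. The induced rank-$r$ truncation $\CE_{r,\bullet}$ lies in $\FL_E$, and specifies the $r$-step part of the data.

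First I would invoke Proposition \ref{p:fil2KR} to identify $\CZ(\CE_r)^\dag \simeq \CN_{P,\CE_{r,\bullet}}$, the open-and-closed component of $\CN_P$ over the flag $\CE_{r,\bullet}$. Under this identification, the natural morphism $\CN_P \to \CN_{m+1}$ sending $(X,X_\bullet,\iota,\lambda,\rho)$ to $X_{r+1}/X_r$ gives a map $\CZ(\CE_r)^\dag \to \CN_{m+1}$. I would then analyze the additional KR condition imposed by $e_{r+1}$: since $e_{r+1}$ projects to $e\in\BW^\sharp$, the condition that $e_{r+1}$ extends to a homomorphism $\sE\to X$ decomposes into (a) $e$ extends to $\sE \to X_{r+1}/X_r$, i.e. the image in $\CN_{m+1}$ lies in the KR divisor $\CZ_{m+1}(e)\simeq\iota(\CN_m)$, and (b) this extension lifts to $\sE \to X_{r+1}$ compatibly with the framing $e_{r+1}$, i.e. the pullback extension $X'_{r+1}$ constructed in \eqref{eq:N P2} splits in a manner matching $e_{r+1}$ as a quasi-homomorphism.

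Condition (a) exactly produces the fiber product $\CN_{P,\CE_{r,\bullet}} \times_{\CN_{m+1}} \CN_m$, i.e. the corresponding component of $\CN_{P'}$; condition (b) then selects the connected component of $\CN_H$ indexed by $\CE_\bullet \in \FL^\square_{E^\sharp}$, via the morphism $\CN_{P'} \to \CN_{1^{r+1}}$ and the bijection \eqref{eq:fl sq}. To upgrade this to an isomorphism of formal schemes I would parallel the tangent-space argument in the proof of Proposition \ref{p:fil2KR}: at a geometric point $x\in\CN_n^\red$ with associated special lattice $A\subset \BV_{\bre F}$, the tangent space of $\CZ(\CE)$ is the cokernel of the $\BF$-linear map $\CE\otimes_{O_F}\BF \to A^\vee/\varpi A$, and decomposing along $\CE_r \oplus O_F\cdot e_{r+1}$ — using that $e_{r+1}$ has unit norm so its contribution to $A^\vee/\varpi A$ is transverse to that of $\CE_r$ — matches the tangent spaces of the iterated fiber product defining $\CN_{H,\CE_\bullet}$.

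The main obstacle will be step (b): verifying that the indexing of connected components by $\FL^\square_{E^\sharp}$ corresponds precisely, via \eqref{eq:fl sq} and the moduli interpretation of $\CN_{E^\sharp}$, to the scheme-theoretic data of a lattice $\CE\subset E^\sharp$ extending $\CE_r$ with prescribed graded piece $\CE_{r+1}/\CE_r = \pair{e_{r+1}}$. This requires a Dieudonn\'e-module translation of splittings of the extension $0\to X_r \to X'_{r+1} \to \sE \to 0$ and a careful comparison with the $N_{E^\sharp}(F)/N_{E^\sharp}(O_F)$-orbit of lifts of $e$ to $E^\sharp$ inside the isocrystal associated to $\BV$.
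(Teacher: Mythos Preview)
Your proposal is correct but considerably more elaborate than the paper's argument. The paper's proof is a single line: the proof of Proposition~\ref{p:fil2KR} applies verbatim to the rank-$(r+1)$ lattice $\CE\subset E^\sharp$, once one notes that the extra generator $e_{r+1}$ has unit norm. In other words, rather than decomposing $\CE=\CE_r\oplus O_F\cdot e_{r+1}$ and building $\CN_{H,\CE_\bullet}$ step by step through the fiber products \eqref{eq:N P1} and \eqref{eq:N P2}, the paper runs the monomorphism/tangent-space argument of Proposition~\ref{p:fil2KR} directly for the full $\CE$: the equivalence between smoothness of $\CZ(\CE)$ at a point and injectivity of $\CE\otimes_{O_F}\BF\to A^\vee/\varpi A$ holds just as before, and the unit-norm condition on $e_{r+1}$ ensures this injectivity is the correct moduli condition cutting out $\CN_{H,\CE_\bullet}$.

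Your decomposition approach has the virtue of making the fiber-product structure of $\CN_H$ explicit, but it also creates the obstacle you flag in step~(b): matching the $\FL^\square_{E^\sharp}$-indexing with the Dieudonn\'e-theoretic data of the extension $0\to X_r\to X'_{r+1}\to\sE\to 0$. In the paper's direct approach this bookkeeping never arises, because the identification of $\CN_{H,\CE_\bullet}$ with the locus where $\sE\otimes_{O_F}\CE\to X$ is a monomorphism is immediate from the moduli description. Your final tangent-space paragraph (transversality from the unit norm) is exactly the observation the paper is invoking; once you have that, the layered fiber-product analysis is redundant.
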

\begin{proof}
The proof of Prop. \ref{p:fil2KR} applies verbatim, noting that the norm of the special vector $u$ is a unit.

\end{proof}

Let $\CN^\spade_{H, \CE_\bullet}$ be the Zariski closure of $\CN_{H, \CE_\bullet}$ in $\CN_{m,n}$, i.e., the smallest closed formal subscheme of $\CN_{m,n}$ which the morphism $\CN_{H, \CE_\bullet}\to \CN_{n,m}$ factors through. We also denote by $\CN^\spade_{H}$ the {\em disjoint union} of $\CN^\spade_{H, \CE_\bullet}$ over all $\CE_\bullet\in \FL^\square_{E^\sharp}$. Then there is a natural morphism 
$\CN^\spade_{H}\to \CN_{m,n}$.

\begin{remark}
The formal scheme $\CN^\spade_{H, \CE_\bullet}$ seems rather intractable, i.e.,  (to our knowledge) there is no moduli theoretical characterization. 
\end{remark}

Define a character 
$$\xymatrix{\psi:\FL^\square_{E^\sharp}\ar[r] &\BC^\times} 
$$
via the bijection \eqref{eq:fl sq} and the character of $N_{E^\sharp}(F)$ defined by \eqref{eq:ch N}.

The group $(G(\BW)\times G(\BV))(F_0)$ acts on $\CN_{m,n}$. 
We are ready to define the intersection number, for a regular $g\in(G(\BW)\times G(\BV))(F_0)$
\begin{align}\label{eq:def int}
\Int(g):=\sum_{(\CE_\bullet, \CE'_\bullet)\in \FL^\square_{E^\sharp}\times \FL^\square_{E^\sharp} }\psi(\CE_\bullet)  \ov\psi(\CE'_\bullet)\, \bigl(\CN^\spade_{H, \CE_\bullet}, g\cdot  \CN^\spade_{H, \CE'_\bullet}\bigr)_{\CN_{m,n}}.
\end{align}
Here, for two closed formal subscheme $\CZ_1,\CZ_2$ of a regular formal scheme $\CX$ over $\Spf O_{\bre F}$, we define their intersection number as
\begin{align*}
 \bigl(\CZ_1,\CZ_2\bigr)_{\CX}&=\chi ( \CO_{\CZ_1}\otimes^\BL_{\CO_\CX}  \CO_{\CZ_2})\\
 &= \sum_{i,j} (-1)^{i+j}{\rm length}_{O_{\bre F}}{\rm H}^j(\CX, {\rm Tor}_i^{\CO_{\CX}}( \CO_{\CZ_1}, \CO_{\CZ_2} )),
\end{align*}
if the right hand side is finite (e.g., if $\CZ_1\cap\CZ_2$ is a proper scheme over $\Spf O_{\bre F}$). For brevity we write the right hand side of \eqref{eq:def int}
 as
\begin{align}\label{eq:def int 1}
\Int(g)= \bigl(\CN^\spade_{H, \psi}, g\cdot  \CN^\spade_{H,\ov \psi}\bigr)_{\CN_{m,n}}.
\end{align}

\begin{remark}By Prop.~\ref{p:fil2KR1}, we could have simply taken the KR cycle to formulate the intersection problem, avoiding the filtered RZ spaces. However, the filtered RZ spaces seem more natural and amenable for generalization, as the example of Ginzburg--Rallis cycle in \S\ref{s:GR} shows. 
\end{remark}

\subsection{The AFL  conjectures in the Bessel case}\label{ss:AFL}
\begin{conjecture}[Arithmetic Fundamental Lemma conjecture for Bessel cycles]\label{AFLconj}

\smallskip

(a) For a regular $g\in (G(\BW)\times G(\BV))(F_0)$,  the disjoint union 
$$\coprod_{(\CE_\bullet, \CE'_\bullet)\in \FL^\square_{E^\sharp}\times \FL^\square_{E^\sharp}  }\CN^\spade_{H, \CE_\bullet}\cap g\cdot  \CN^\spade_{H, \CE'_\bullet}$$
is a proper scheme over  $\Spf O_{\bre F}$. 

\smallskip

(b)
For a regular $g\in (G(\BW)\times G(\BV))(F_0)$ matching $\gamma\in S_{n}(F_0)_\rs$ where both $g$ and $\gamma$ are in normal forms, we have  
$$ 
\del(\gamma, {\bf 1}_{S_n(O_{F_0})})=-\Int(g)\log q.
$$

\end{conjecture}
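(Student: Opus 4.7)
The plan is to prove Conjecture~\ref{AFLconj} by reducing both sides, in a suitable ``banal'' range, to the known original AFL for the pair $\U(\BW)\times\U(\BW^\sharp)$ of \cite{Z21, MZ}, following the template of the lattice-counting analysis in \S\ref{ss:lat}.

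For part (a), I would first invoke Proposition~\ref{p:fil2KR1} to identify each open-and-closed piece $\CN_{H,\CE_\bullet}$ with the formally smooth locus $\CZ(\CE)^\dag$ of the Kudla--Rapoport cycle $\CZ(\CE)$, where $\CE$ is the maximal lattice in the framed flag $\CE_\bullet$. The Zariski closure $\CN^\spade_{H,\CE_\bullet}$ is then contained in $\CZ(\CE)$, with reduced locus supported on a union of Bruhat--Tits strata $\CV(\Lambda)$ for $\Lambda\supset\CE$ by~\eqref{eq:KRstrat}. Consequently, after pulling back along $\CN_{m,n}\to\CN_n$,
$$
\CN^\spade_{H,\CE_\bullet}\cap g\cdot \CN^\spade_{H,\CE'_\bullet}\subset \CZ(\CE)\cap g\cdot \CZ(\CE').
$$
Properness then follows from an arithmetic analog of Proposition~\ref{prop:int fin}: the regularity of $g$ together with the vertex-lattice constraints $\CE\subset \Lambda\subset \Lambda^\vee$ should force every pair $(\Lambda,\Lambda')$ contributing to the reduced intersection to lie in a finite explicit set controlled by the normal-form invariants $t_i$ and $g^\sharp$.

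For part (b) I would proceed by induction on $r$, with base case $r=0$ provided by the original AFL of \cite{Z21, MZ}. The first milestone---already giving the ``very limited evidence'' promised in the abstract---is the \emph{banal} case: $g$ a normal form \eqref{eq:n form U} with all $t_i\in O_F^\times$. Here, as in the proof of Proposition~\ref{prop:FL triv}, integrality forces every contributing lattice $\Lambda$ to split as $\Lambda=\Lambda^\sharp\oplus(\CE^{0,\flat}+g\CE^{0,\flat})$, and likewise for $\Lambda'$. Upgrading this to the $p$-divisible level, I would show that the universal filtered object on $\CN^\spade_H\cap g\cdot \CN^\spade_H$ splits canonically into a rigid totally definite factor $\sE\otimes_{O_F}(\CE^{0,\flat}+g\CE^{0,\flat})$ and an object for the pair $\U(\BW)\times\U(\BW^\sharp)$; the character values $\psi(\CE_\bullet)\ov\psi(\CE'_\bullet)$ then sum to the expected unit weight via the bijection~\eqref{eq:fl sq}. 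This should yield $\Int(g)=\Int(g^\sharp)$ in the notation of the original AFL on $\CN_{m,m+1}$. Combined with Proposition~\ref{prop:FL triv} on the analytic side and the main result of \cite{Z21, MZ}, this establishes Conjecture~\ref{AFLconj} in the banal case.

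The main obstacle is the non-banal case, when some $t_i$ has positive valuation. Here $\CN^\spade_{H,\CE_\bullet}$ genuinely differs from $\CZ(\CE)$: the boundary contribution comes from strata $\CV(\Lambda)$ with $\CE\subsetneq E\cap\Lambda$ (see Proposition~\ref{p:dag red}), whose lift to the Zariski closure appears to lack a clean moduli-theoretic description (cf.\ the remark following Proposition~\ref{p:fil2KR1}). On the analytic side, $\del(\gamma,\mathbf{1}_{S_n(O_{F_0})})$ receives contributions that cannot be read off from the $(m,m+1)$-case. I expect that handling this will require a serious refinement of the lattice-counting argument of Proposition~\ref{prop:int fin}---decomposing $\Int(g)$ along the Bruhat--Tits stratification and matching each stratum contribution against a spectral piece of the derived orbital integral. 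A complete proof of Conjecture~\ref{AFLconj} in full generality likely awaits either a semi-global approach in the style of Rapoport--Smithling--Zhang, or a comparison of generating series of arithmetic special cycles.
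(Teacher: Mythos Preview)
The statement you are attempting to prove is a \emph{conjecture}; the paper does not prove it and does not claim to. What follows the conjecture in the paper is not a proof but a sequence of partial results labeled as ``indirect evidence'': Proposition~\ref{prop:int sch} shows only that the \emph{non-closed} intersection $\CN_H\cap(1,g)\cdot\CN_H$ is a noetherian scheme; the next proposition identifies this non-closed intersection with the $(m,m+1)$ intersection in the banal case; and the final proposition handles part~(a) fully only when $m=1$. Part~(b) is not established in any case.

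Your proposal is therefore not to be compared against a proof in the paper---there is none---but it does contain a genuine gap even in the banal case you claim to settle. The intersection number $\Int(g)$ in~\eqref{eq:def int} is defined using the Zariski closures $\CN^\spade_{H,\CE_\bullet}$, which the paper explicitly flags (in the remark following Proposition~\ref{p:fil2KR1}) as having no known moduli-theoretic description. Your sentence ``I would show that the universal filtered object on $\CN^\spade_H\cap g\cdot\CN^\spade_H$ splits canonically\ldots'' presupposes exactly such a description. The paper's banal-case proposition proves the splitting only on the open piece $\CN_H\times_{\CN_{m,n}}(1,g)\cdot\CN_H$, and the remark immediately afterward states plainly that even when this open intersection is proper, it is not known whether it coincides with the $\spade$-version. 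So your claimed identity $\Int(g)=\Int(g^\sharp)$ in the banal case is precisely the step the paper isolates as unproved. Your closing paragraph correctly identifies the non-banal case as the principal obstruction, but the banal case is not yet in hand either.
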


We have some indirect evidence towards part (a). 
\begin{proposition}\label{prop:int sch}
Let $g\in G(\BV)(F_0)_\rs$.
Then the intersection $\CN_H\cap (1,g)\cdot\CN_H$ is a noetherian scheme.
\end{proposition}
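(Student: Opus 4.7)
The plan is to mirror the lattice-counting argument of Proposition \ref{prop:int fin}, bounding the intersection $\CN_H \cap (1,g)\CN_H$ inside a KR cycle $\CZ(L_g)$ for an $O_F$-lattice $L_g \subset \BV$ of full rank $n$. Any such cycle $\CZ(L_g)$ has $0$-dimensional reduced underlying scheme (the set of special lattices $A \subset \BV_{\bre F}$ containing $L_g^\vee$ is finite, since such an $A$ is sandwiched between $L_g^\vee$ and its $\vee\vee$-dual, a bounded region), and thus is a noetherian scheme. Once the above bound is established, $\CN_H \cap (1,g)\CN_H$ is a closed formal subscheme of the preimage of this noetherian locus in $\CN_{m,n}$, hence itself noetherian.

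To construct $L_g$, I would use Proposition \ref{p:fil2KR1} to observe that the projection $\CN_H \to \CN_n$ factors as the disjoint union of $\CN_{H,\CE_\bullet} \simeq \CZ(\CE)^\dag \hookrightarrow \CN_n$ over flags $\CE_\bullet \in \FL^\square_{E^\sharp}$, where $\CE \subset E^\sharp$ is the top rank-$(r+1)$ lattice of the flag. Projecting $\CN_H \cap (1,g)\CN_H$ to $\CN_n$ therefore lands in the union of $\CZ(\CE) \cap g\cdot \CZ(\CE') = \CZ(\CE + g\CE')$ over pairs of flags $(\CE_\bullet,\CE'_\bullet)$. Putting $g$ in the normal form \eqref{eq:n form U}, the integrality constraint $(\CE'^\flat, g\CE^\flat) \subset O_F$ (needed for $\CZ(\CE + g\CE')$ to be nonempty) forces $\CE^\flat$ and $\CE'^\flat$ to lie in a fixed bounded region of $E$, as in the first half of the proof of Proposition \ref{prop:int fin}, so only finitely many pairs of flags contribute. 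Then the shared image in $\CN_m$---the coincidence of the two $X^\flat$'s---combined with the regularity of $g^\sharp \in G(\BW^\sharp)(F_0)$ (which makes $\{e, g^\sharp e, \ldots, (g^\sharp)^m e\}$ a basis of $\BW^\sharp$) should yield, as in the inductive step \eqref{eq:Lam2}, that the lifts $g^i e_{r+1}$ for $0\leq i \leq m$ lie in the relevant dual lattice on both sides. The upshot is that the intersection is contained in $\CZ(L_g)$ with
\[
L_g := \CE^{0\flat} + g\CE^{0\flat} + \pair{e_{r+1},\, ge_{r+1},\, \ldots,\, g^m e_{r+1}}_{O_F},
\]
which has full rank $n$ by the regularity of $g$.

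The main obstacle is transporting the inductive step producing $g^i e_{r+1}$ from the lattice counting proof into the formal scheme setting. The cleanest route seems to be a pointwise argument via Dieudonn\'e theory, in the spirit of the proof of Proposition \ref{p:fil2KR}: a geometric point of $\CN_H \cap (1,g)\CN_H$ over $\BF$ gives a special lattice $A \subset \BV_{\bre F}$ equipped with two compatible filtrations (one for each framing) producing the same ``complementary'' sublattice $A^\flat \subset \BW_{\bre F}$, and then the lattice argument of Proposition \ref{prop:int fin} applies verbatim to force $L_g \subset A^\vee$. Since $L_g$ depends only on $g$, this bounds the set of geometric points of $\CN_H \cap (1,g)\CN_H$ inside the finite set $\CZ(L_g)^{\red}$; combined with the fact that $\CN_n$ is formally locally of finite type, we deduce that $\CN_H \cap (1,g)\CN_H$ is a noetherian scheme.
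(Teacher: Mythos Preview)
Your proposal is correct and follows essentially the same route as the paper: transport the lattice argument of Proposition~\ref{prop:int fin} to bound the intersection inside $\CZ(\Lambda(g))$ for the full-rank lattice $\Lambda(g)$ of \eqref{eq:Lam g}, then conclude noetherianness from that of this KR cycle. The only slip is that your $L_g$ should carry the scaling factors $\varpi^M,\varpi^N$ as in \eqref{eq:Lam g} (the argument only yields $\varpi^M g^i e_{r+1}$ in the lattice, not $g^i e_{r+1}$ itself), and the paper concludes by noting that $\CN_H\to\CN_n$ is componentwise a locally closed immersion into the noetherian scheme $\CZ(\Lambda(g))$ rather than via your pointwise Dieudonn\'e count---but both finishes work.
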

\begin{proof}
The proof of Prop. \ref{prop:int fin} applies to show that $\CN_H\cap(1, g)\cdot\CN_H$ is contained in $\CN_m\times \CZ(\Lambda(g))$ where  $\Lambda(g)$ is the lattice defined by \eqref{eq:Lam g}. Note that $\CZ(\Lambda(g))$ is a noetherian proper scheme.  Since $\CN_H$ is a locally closed formal subscheme of $\CN_n$ (via the natural morphism  $\CN_H\to\CN_n$), the desired assertion follows.
\end{proof}

In the formulation of the intersection problem in \S\ref{ss:Bcycle}, we have fixed a basis $\{e_1,\cdots, e_r\}$ of $E$. Let $\CE^{0\flat}\subset E$ be the lattice $\pair{e_1,\cdots, e_r}$.  Note that for $g\in G(\BV)_\rs$, the sum $E+gE\subset \BV$ is a non-degenerate Hermitian space with dimension $2r$.
\begin{proposition}
Let $g\in G(\BV)(F_0)$ be a normal form  \eqref{eq:n form U}.
Assume that $\CE^{0\flat}+g\CE^{0\flat}$ is a unimodular lattice in $E+g E$. Then there is an isomorphism of formal schemes
$$
\xymatrix{\CN_H\times_{\CN_{m,n}}(1,g)\cdot\CN_H\ar[r]^-{\sim}& \Delta(\CN_{m})\times_{\CN_{m,m+1}} (1,g^\sharp)\cdot \Delta(\CN_m). }
$$
Here the right hand side is the intersection in the AFL conjecture for $U(m)\times U(m+1)$, i.e. 
$$
\xymatrix{\Delta: \CN_m\ar[r]& \CN_{m,m+1}= \CN_{m}\times_{\Spf O_{\bre F}} \CN_{m+1}}.$$
In particular, if we further assume that $g$ is regular, then $\CN_H\cap(1,g)\cdot\CN_H$ is a proper scheme.

\end{proposition}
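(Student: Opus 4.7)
The plan is to lift the lattice-decomposition argument of Proposition~\ref{prop:FL triv} to $p$-divisible groups. Under the unimodularity of $\CE^{0\flat}+g\CE^{0\flat}$, the Bessel datum in the intersection should split canonically as an orthogonal direct sum of a rigid ``canonical lifting'' part supported on $\CE^{0\flat}+g\CE^{0\flat}$ and a ``middle'' part living in $\CZ_{m+1}(e)\simeq \CN_m$, thereby reducing the intersection problem on $\CN_{m,n}$ to the one on $\CN_{m,m+1}$.

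First, I would set up the splitting on $S$-points. Given a point $(X_1,X_{1,\bullet},\iota_1,\lambda_1,\rho_1)\in\CN_H(S)$ whose image in $\CN_n(S)$ is the $g$-translate of a second point $(X_2,X_{2,\bullet},\iota_2,\lambda_2,\rho_2)\in\CN_H(S)$ sharing the same $\CN_m$-component, I would construct, canonically in $S$, an $O_F$-linear isomorphism
$$X_1\;\isoarrow\; \bigl(\sE_S\otimes_{O_F}\CE^{0\flat}\bigr)\times X^\sharp \times \bigl(\sE_S\otimes_{O_F}\CE^{0\flat}\bigr)^\vee$$
compatible with both filtrations $X_{1,\bullet}$, $X_{2,\bullet}$ and with the polarization $\lambda_1$, where $X^\sharp\in \CZ_{m+1}(e)(S)\simeq \CN_m(S)$. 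The same splitting is then forced on $X_2$, and the identification is unique because the outer factors $\sE_S\otimes_{O_F}\CE^{0\flat}$ are rigid (formally \'etale over $\Spf O_{\bre F}$ via Serre's tensor construction).

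Second, I would justify this splitting by treating geometric points first and then deforming. At a geometric point, the special lattice $A\subset\BV_{\bre F}$ associated to $X_1$ contains both $\CE^{0\flat}$ and $g\CE^{0\flat}$ --- these come respectively from the Bessel filtration of $X_1$ and, pulled back through $g$, of $X_2$ --- hence it contains the self-dual unimodular lattice $\CE^{0\flat}+g\CE^{0\flat}$. Self-duality of $A$ then yields $A=(\CE^{0\flat}+g\CE^{0\flat})\oplus A^\sharp$ with $A^\sharp$ a special lattice in $\BW^\sharp_{\bre F}$. Rigidity of $\sE\otimes_{O_F}\CE^{0\flat}$ and Grothendieck--Messing theory, combined with the Rosati duality between $X_r^\vee$ and $X_1/X_{1,r+1}$, should promote this to a canonical splitting in families.

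Third, I would read off the identification with the right hand side. The splitting forces the framed flags $\CE_\bullet,\CE'_\bullet\in\FL^\square_{E^\sharp}$ labelling the components of $\CN_H$ in which the two points lie to coincide with the neutral base point, while the residual datum $X^\sharp$ together with its $\CN_m$-substructure lands exactly in the fiber product $\Delta(\CN_m)\times_{\CN_{m,m+1}}(1,g^\sharp)\cdot\Delta(\CN_m)$; running the construction in reverse produces an inverse, yielding the desired isomorphism of formal schemes. Properness for regular $g$ then follows from the corresponding properness in the $\U(m)\times\U(m+1)$ AFL setting, cf.~\cite{Z21}. The main obstacle will be the family version of Step~2: while the pointwise decomposition via special lattices is straightforward, verifying that the splitting of $X_{1,r+1}=X_{1,r}\oplus X^\sharp$ extends through the polarization to a splitting of the whole $X_1$ respecting the similitude factor --- not merely pointwise --- requires a careful Grothendieck--Messing deformation argument.
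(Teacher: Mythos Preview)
Your proposal is correct and follows the same splitting strategy the paper intends: the paper's proof is literally ``The proof of Prop.~\ref{prop:FL triv} applies,'' and you have unpacked what this means at the level of $p$-divisible groups rather than lattices.

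Two comments will sharpen your argument. First, in Step~2 you assert that the special lattice $A$ contains $\CE^{0\flat}$ and $g\CE^{0\flat}$ directly from the Bessel filtrations. Strictly, the filtrations only give $\CE^\flat\subset A^\vee$ and $g\CE'^\flat\subset A^\vee$ for \emph{some} framed flags $\CE^\flat,\CE'^\flat\in\FL^\square_{E}$; you must first run the integrality argument from the proof of Prop.~\ref{prop:int fin} (with $N=0$ under unimodularity) to force $\CE^\flat=\CE'^\flat=\CE^{0\flat}$, and only then obtain $\CE^{0\flat}+g\CE^{0\flat}\subset A^\vee$. This is the step that pins the components to the neutral base point, and it logically precedes the splitting.

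Second, the ``family version'' obstacle you flag dissolves once you observe that for a unimodular lattice $L=\CE^{0\flat}+g\CE^{0\flat}$ the KR cycle $\CZ(L)$ is \emph{functorially} isomorphic to $\CN_{m+1}$ (this is exactly how the embedding $\iota\colon\CN_m\to\CN_{m+1}$ is defined, iterated $2r$ times), so the decomposition $X\simeq(\sE_S\otimes_{O_F}L)\times X^\sharp$ holds on the nose for any $S\in\Nilp$, not just at geometric points. No separate Grothendieck--Messing deformation argument is needed; the rigidity of the totally definite factor is already packaged into the moduli description of $\CZ(L)$. With these two adjustments your outline becomes a complete proof matching the paper's intent.
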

\begin{proof}
The proof of Prop.~\ref{prop:FL triv} applies.
\end{proof}

\begin{remark}
One may expect that, if the intersection $\CN_H\cap (1,g)\cdot\CN_H$ in Prop.~ \ref{prop:int sch} is already a proper scheme, then  $\CN_H^\spade\times_{\CN_{m,n}} (1,g)\cdot\CN_H^\spade$ is isomorphic to  $\CN_H\cap (1,g)\cdot\CN_H$.  But we do not know how to prove this.
\end{remark}

There is another special case where we can at least show that the fiber product $\CN_H^\spade\times_{\CN_{m,n}} (1,g)\cdot\CN_H^\spade$ is a proper scheme. This means that the intersection $\CN_{H,\CE_\bullet}^\spade\cap (1,g)\cdot\CN_{H,\CE'_\bullet}^\spade$  is a proper scheme for every pair $ (\CE_\bullet, \CE'_\bullet)\in \FL^\square_{E^\sharp}\times \FL^\square_{E^\sharp}$, and is empty for all but finitely many pairs. In particular, the intersection number in part (b) of the conjecture is well-defined.

\begin{proposition}Suppose $m=1$ (so that $\CN_{m,n}\simeq \CN_n$). 
For $g\in G(\BV)(F_0)_\rs$,
 the formal scheme  $\CN_H^\spade\times_{\CN_{m,n}} (1,g)\cdot\CN_H^\spade$ is a proper scheme.
\end{proposition}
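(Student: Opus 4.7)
The plan is to reduce the problem to intersections of Kudla--Rapoport cycles in $\CN_n$ via Proposition~\ref{p:fil2KR1}. Since $m=1$ gives $\CN_{m,n} \simeq \CN_n$, and since, for each $\CE_\bullet \in \FL^\square_{E^\sharp}$ with top lattice $\CE = \CE_{r+1}$, the open subscheme $\CN_{H,\CE_\bullet} \simeq \CZ(\CE)^{\dag}$ of $\CZ(\CE)$ has closure $\CN^\spade_{H,\CE_\bullet}$ contained in $\CZ(\CE)$, each summand of the intersection fits inside
$$\CN^\spade_{H,\CE_\bullet} \cap g\cdot \CN^\spade_{H,\CE'_\bullet} \subseteq \CZ(\CE) \cap g \cdot \CZ(\CE') = \CZ(\CE + g\CE'),$$
using the immediate identities $g \cdot \CZ(L) = \CZ(gL)$ and $\CZ(L_1) \cap \CZ(L_2) = \CZ(L_1 + L_2)$ for KR cycles.

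The first step is to establish the full-rank assertion $E^\sharp + g E^\sharp = V$ under regularity of $g$ in the case $m=1$. Taking $e_{r+1} = e \in W^\sharp$ in the orthogonal decomposition $V = W^\sharp \oplus (E \oplus E^\vee)$, the normal form~\eqref{eq:n form U} shows $gE = E^\vee$ and $g e_{r+1} = g^\sharp e$. Regularity of $g^\sharp$ when $\dim_F W = m = 1$ is precisely the condition that $\{e, g^\sharp e\}$ is an $F$-basis of the $2$-dimensional $W^\sharp$. Hence $E^\sharp + g E^\sharp = E + E^\vee + W^\sharp = V$, so $\CE + g\CE'$ is a full-rank $O_F$-lattice in $\BV$ for every pair $(\CE,\CE')$.

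Next, for any full-rank $O_F$-lattice $L \subset \BV$, I claim $\CZ(L)$ is proper over $\Spf O_{\bre F}$. By the Bruhat--Tits stratification~\eqref{eq:KRstrat}, $\CZ(L)^\red$ is supported on those strata $\CV(\Lambda)$ with $\Lambda \in \Ver(\BV)$ and $L \subseteq \Lambda$; then $\Lambda \subseteq \Lambda^\vee \subseteq L^\vee$, and only finitely many $\Lambda$ sit in the finite chain $L \subseteq \Lambda \subseteq L^\vee$. Each $\CV(\Lambda)$ is projective, so $\CZ(L)^\red$ is a projective $\BF$-scheme; properness of $\CZ(L)$ as a formal scheme follows, since it is a closed formal subscheme of the noetherian $\CN_n$ with proper reduction.

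The main obstacle is the finiteness of the set of pairs $(\CE_\bullet, \CE'_\bullet)$ producing a non-empty intersection, ensuring that the disjoint union is in fact a \emph{finite} disjoint union of proper schemes. Parametrize $\CE = u\cdot \CE^0$ and $\CE' = u'\cdot\CE^0$ with $u, u' \in N_{E^\sharp}(F)/N_{E^\sharp}(O_F)$ via~\eqref{eq:fl sq}. Non-emptiness of $\CZ(\CE + g\CE')$ forces $(\CE + g\CE', \CE + g\CE') \subseteq O_F$, which translates into bilinear integrality conditions on the unipotent coordinates. I propose adapting the argument in the proof of Proposition~\ref{prop:int fin}: the pairings of $\CE^\flat$ with $g\CE^\flat$ bound the totally isotropic part, and iterating with the ``special'' vectors $g^i e_{r+1}$ for $0 \leq i \leq m$ extracts a full-rank sublattice of $V$ depending only on $g$, inside which every admissible $\CE + g\CE'$ must sit. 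Regularity of $g$ is precisely the non-degeneracy needed, and since $N_{E^\sharp}(F)/N_{E^\sharp}(O_F)$ is discrete, the boundedness produces a finite set of permissible $(u,u')$. Executing this accounting cleanly, and carefully tracking the non-isotropic direction contributed by $e_{r+1}$, is the delicate point of the argument.
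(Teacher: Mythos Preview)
Your proposal is correct and follows essentially the same approach as the paper. Both reduce via Proposition~\ref{p:fil2KR1} to the containment $\CN^\spade_{H,\CE_\bullet}\subset\CZ(\CE)$, then establish (i) that $\CE+g\CE'$ has full rank in $\BV$ (hence $\CZ(\CE+g\CE')$ is proper) and (ii) that only finitely many pairs $(\CE_\bullet,\CE'_\bullet)$ contribute, the latter by invoking the bounding argument of Proposition~\ref{prop:int fin}. Your direct verification that $E^\sharp+gE^\sharp=V$ from the normal form and the regularity condition $\{e,g^\sharp e\}$ spanning $W^\sharp$ when $m=1$ is exactly the content of the paper's appeal to ``the proof of Prop.~\ref{prop:int fin} shows that $\CE+g\CE'$ is a lattice of full rank''; you have simply made the $m=1$ dependence explicit. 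One small point: your full-rank computation assumes $g$ is in normal form, so you should note (as the paper does implicitly) that the dimension of $E^\sharp+gE^\sharp$ is invariant under the $\bH$-action, allowing this reduction.
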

\begin{proof}
By Prop.~\ref{p:fil2KR1}, the Zariski closure $\CN_{H,\CE_\bullet}^\spade$ is a closed formal subscheme of the KR cycle $\CZ(\CE)$ where $\CE=\CE_{r+1}$.
Therefore it suffices to show  that
\begin{enumerate}
\item[(a)]  $\CZ(\CE)\cap g \CZ(\CE')$ is empty except for finitely many pairs $(\CE_\bullet, \CE'_\bullet)\in \FL^\square_{E^\sharp}\times \FL^\square_{E^\sharp} $, and
\item [(b)] $\CZ(\CE)\cap g \CZ(\CE')$ is a proper scheme.
\end{enumerate}

The first assertion follows from the proof of Prop. \ref{prop:int sch} (hence Prop.~\ref{prop:int fin}). For the second one, the proof of   Prop.~\ref{prop:int fin} shows that  $\CE+g\CE'$ is a lattice of full rank in $\BV$.

\end{proof}

\end{document}